\newtheorem{thm}{Theorem}[section]
\newtheorem{cor}[thm]{Corollary}
\newtheorem{lem}[thm]{Lemma}
\newtheorem{prop}[thm]{Proposition}
\theoremstyle{definition}
\newtheorem{defn}[thm]{Definition}
\theoremstyle{remark}
\newtheorem{rem}[thm]{Remark}
\numberwithin{equation}{section}
\newcommand{\be}{\begin{equation}}
\newcommand{\ee}{\end{equation}}
\newcommand{\R}{\mathbb R}
\newcommand{\ep}{\epsilon}
\newcommand{\eps}{\epsilon}
\newcommand{\p}{\partial}
\newcommand{\comment}[1]{}
\begin{document}
\title[On certain degenerate one-phase free boundary problems]{On certain degenerate one-phase free boundary problems}
\author{D. De Silva}
\address{Department of Mathematics, Barnard College, Columbia University, New York, NY 10027}
\email{\tt  desilva@math.columbia.edu}
\author{O. Savin}
\address{Department of Mathematics, Columbia University, New York, NY 10027}\email{\tt  savin@math.columbia.edu}
\begin{abstract} We develop an existence and regularity theory for a class of degenerate one-phase free boundary problems. 
In this way we unify the basic theories in free boundary problems like the classical one-phase problem, the obstacle problem, 
or more generally for minimizers of the Alt-Phillips functional. 
\end{abstract}

%\thanks{}
%\subjclass{}%
%\keywords{One-phase free boundary problem; Harnack Inequality}
\maketitle
\section{Introduction}

The most basic elliptic free boundary problems arise in the study of minimizers of energy functionals 
$$J(u,\Omega)=\int_\Omega \frac 12 |\nabla u|^2 + W(u) \, dx, $$
among functions $u$ which are prescribed on the boundary $$u = \varphi \quad \mbox{ on $\p \Omega$.}$$
The potential $W(t) \ge 0$ is assumed to be nonnegative and to vanish on $(-\infty,0]$. 

If we restrict our attention to nonnegative boundary data $\varphi \ge 0$ then, the conditions on $W$ guarantee that minimizers must satisfy $u \ge 0$. The strict positivity of $u$ in the interior of $\Omega$ can be deduced from the Euler-Lagrange equation
$$\triangle u= W'(u),$$
and the strong maximum principle, whenever $W$ is of class $C^{1,1}$ at the origin. Otherwise $\{u=0\}$ can 
develop patches, and then interesting questions arise concerning the properties of the {\it free boundary} $\p \{u>0\}$.

 Historically the first such case that was analyzed systematically is the {\it obstacle problem}, that corresponds to
$$W(t)=t^+, \quad \quad \triangle u= \chi_{\{u>0\}}.$$
The optimal regularity of the solution was first obtained by Frehse in \cite{F}. The general regularity theory of the free boundary was established by Caffarelli in \cite{C} (see also \cite{C4}). 
He made use of monotonicity and convexity estimates of the solution $u$ to obtain the smoothness of the reduced part of the 
free boundary $\p^*\{u>0\}$. 

An important class of potentials which were studied later by Alt and Caffarelli are those which are discontinuous at $0$, and in the simplest form correspond to
$$ W(t)=\chi_{\{t>0\}}, \quad \quad \triangle u= 0 \quad \mbox{in $\{u>0\}$}, \quad |\nabla u|=\sqrt 2 \quad   \mbox{on $\p \{u>0\}$}.$$
This is known as the {\it one-phase} or {\it Bernoulli} free boundary problem and the smoothness of the reduced part of the free boundary was established by 
variational 
techniques in \cite{AC}. Later Caffarelli developed an alternate viscosity theory approach for the regularity of the free boundary, based on 
the Harnack inequality and regularizations by sup-convolutions \cite{C1,C2,C3}. A method based on Harnack inequality and compactness arguments was subsequently developed by the first author in \cite{D}.

Another general class of examples with free boundaries are given by the Alt-Phillips energy functional, which correspond to the power-growth potentials 
$$W(t)=(t^+)^\gamma  \quad \mbox{with $\gamma \in (0,2)$}, \quad \quad  \triangle u=\gamma u^{\gamma -1}.$$
When $\gamma \in (0,1)$ these potentials interpolate between the one-phase problem $\gamma=0$ and the obstacle problem 
$\gamma=1$. Alt and Phillips showed in \cite{AP} that a similar analysis as in the one-phase problem can be carried out in this case as well, 
and they established the smoothness of the reduced part of the free boundary.  

As observed by Alt and Phillips, after a simple change of variables 
$$w=u^{1/\beta}, \quad \quad  \beta:=\frac{2}{2-\gamma}, \quad \quad \beta \in (1,\infty),$$ 
the problem above can be viewed as a one-phase free boundary problem for $w$. It turns out that $w$ is Lipschitz and it solves a degenerate equation of the type
\begin{equation}\label{weq}
\triangle w = \frac {h(\nabla w)}{w} \quad \quad \quad \mbox{in $\{w>0\}$,}  
\end{equation}
with 
\begin{equation}\label{weq1}
\nabla w \subset \{h=0\} \quad \quad \quad \mbox{on $\p \{w>0\}$,}
\end{equation}
where $h$ is the quadratic polynomial 
$$h(p)= \frac \gamma \beta - (\beta-1) |p|^2.$$
A key feature of equation \eqref{weq} is that it remains invariant under Lipschitz scaling $\tilde w(x)=w(rx)/r$. The right hand side 
degenerates as $w$ approaches $0$ and the free boundary condition \eqref{weq1} can be understood as a natural balancing 
condition in order to seek out for Lipschitz solutions $w$.

In this paper we are interested in developing the viscosity theory for the degenerate class of one-phase free boundary 
problems \eqref{weq}-\eqref{weq1}, for general functions $h$. When $h$ is not necessarily quadratic as in the examples above, then the equation \eqref{weq} cannot be reverted back to an Alt-Phillips equation by a change of variables. Our 
main assumptions are that $h \in C^1$ and, $h \ge 0$ in a star-shaped domain $D$ and $h \le 0$ outside $D$. The free boundary 
condition $\eqref{weq1}$ then reads as $\nabla w \in \p D$. The interior regularity for solutions to \eqref{weq} is not immediate as the right hand side degenerates either as $w \to 0$ or $\nabla w \to \infty$. In our analysis we will make use of the results of Imbert and Silvestre \cite{IS} in order to establish a uniform Holder modulus of continuity for $w$. 

Equations \eqref{weq}-\eqref{weq1} do not necessarily have a variational structure, but can be thought as interpolating free 
boundary conditions for 
different exponents $\gamma$ depending on the behavior of $h$ near $\p D$, and the direction $\nu$ to the free boundary. For example, a region around $\p D$ where $h$ vanishes corresponds to the classical one-phase free boundary problem, while in a region where $h$ is quadratic corresponds to solving the Alt-Phillips free boundary problem for some exponent $\gamma$.

 We remark that the sign assumptions on the function $h$ are crucial. When $h$ changes sign across $\p D$ in the opposite 
directions, $h \le 0$ in $D$ and $h \ge 0$ outside $D$, then the problem becomes completely different and it would correspond to 
the case of negative $\gamma$'s in the Alt-Phillips functional. We will address this interesting case in a subsequent paper. 

\subsection{Set-up and definitions.} Let $D \subset \R^n$ be a bounded $C^1$ domain and let $h \in C^1(\R^n)$ vanish on 
$\Gamma:=\p D$. Assume that $0 \in D$ and 
\be\label{sign_h} h \geq 0 \quad \text{in $D$}, \quad h\leq0 \quad \text{in $\bar D^c,$}\ee 
\be\label{growth} h(p) \geq -C |p|^2, \quad C>0, \quad \text{as $|p| \to \infty$.}\ee
 Here and throughout the paper, the superscript $c$ denotes the complement of the set in $\R^n.$

We ask for $D$ to be star-shaped with respect to the origin. Precisely, given a unit direction $\nu \in \mathbb S^{n-1}$, we denote by $f(\nu) \in \R$ the positive number such that $$f(\nu) \, \nu \, \, \in \, \, \Gamma=\p D.$$ In view of the $C^1$ regularity of $D$, the function
$$f: \mathbb S^{n-1} \to \R,$$
is also $C^1$. In particular there exists a $\delta>0$ such that, 
\be\label{fnu}\delta \leq f \leq \delta^{-1},\ee
and if $x=f(\nu) \, \nu \in \Gamma$ and $\omega_x$ is the unit normal to $\Gamma$ at $x$ pointing towards $\bar D^c$, then
\be \label{star} \omega_x \cdot \nu \geq \delta.\ee
Without loss of generality we may relabel the constant $C$ in \eqref{growth}, such that the inequality holds in the whole space
\be\label{growth2} h(p) \geq -C_h |p|^2,  \quad \forall p \in \R^n.\ee

We are now ready to introduce our one-phase free boundary problem: 
find a continuous function $w \geq 0$ in $\bar B_1$ 
which is prescribed on $\p B_1$ and solves 
\be\label{general}\begin{cases}\Delta w = \dfrac{h(\nabla w)}{w} \quad \text{on $B_1^+(w):=B_1 \cap \{w>0\}$,}\\
\nabla w \in \Gamma, \quad \text{on $F(w):= \p B_1^+(w) \cap B_1$.}

\end{cases}
\ee
%$F(w)$ is the free boundary of $w$ and for simplicity we set the problem on the unit ball $B_1$ centered at 0. This is clearly not essential.
The two conditions above are understood in the viscosity sense and we make them precise below.
First we recall that given two continuous functions $u, \psi$ in $B_1$, we say that $\psi$ touches $u$ by below (resp. above) at $x_0 \in B_1$ if $$\psi \leq u \quad \text{(resp. $\psi \geq u)$} \quad \text{near $x_0$}, \quad \psi(x_0)=u(x_0).$$
If the first inequality is strict (except at $x_0$), we say that $\psi$ touches $u$ strictly by below (resp. above.)

The notion of viscosity solution for the interior equation is standard and in fact we will show that $w$ is locally Lipschitz and it is a classical solutions in the set $\{w>0\}$. We therefore provide only the definition of viscosity solution to the free boundary condition.

\begin{defn} We say that $w$ satisfies the free boundary condition in \eqref{general} in the viscosity sense, if given $x_0 \in F(w)$, and $\psi \in C^2$ such that $\psi^+$ touches $w$ by below (resp. by above) at $x_0$, with $|\nabla \psi(x_0)| \neq 0$, and $\nu$ denotes the unit normal to $F(\psi)$ at $x_0$ pointing towards $\{w>0\}$, then $$|\nabla \psi(x_0)| \leq f(\nu), \quad \text{i.e. $\nabla \psi(x_0) \in \bar D,$} \quad \text{supersolution property}$$ $$\text{(resp. $|\nabla \psi (x_0)| \geq f(\nu)$ i.e. $\nabla \psi(x_0) \not \in D$}, \quad \text{subsolution property}.)$$

\end{defn}

As observed earlier on, this problem is invariant under Lipschitz rescaling:
$$\tilde w (x):= \frac{w(rx)}{r},\quad x \in B_1,$$
a crucial ingredient in the body of the proofs.

%\medskip

%Notice that this class of problems includes the classical one-phase free boundary problem and the classical obstacle problem. Moreover, minimizers of the ``interpolation" Alt-Phillips functional \cite{AP} \be\label{Jgamma}
%J(u):= \int_{\Omega} (|\nabla u|^2 + u^\gamma)dx, \quad 0 < \gamma <2,
%\ee
%are also included in this class, via a simple change of variables ($w=u^{1/\beta}, \frac{\beta}{\beta-1} = \frac{2}{\gamma}.$). For a comprehensive treatment of the classical one-phase problem we refer the reader to \cite{CS} and the references therein. Similarly, for the obstacle problem we refer to \cite{C}.

\subsection{Main results.} We investigate here the question of existence and regularity of viscosity solutions to \eqref{general} together with qualitative properties of their free boundaries. The main difficulty comes from the fact that the equation is degenerate near the free boundary.
%The arguments leading to the Lipschitz continuity are delicate as ......

We summarize our main results below. The universal constants that appear in the theorems depend only on the dimension $n,$ the $C^1$ norm of $h$ in a neighborhood of $\Gamma$, the $C^1$ norm of $f$, the constant $C_h$ in \eqref{growth2}, and the constant $\delta$ in \eqref{fnu}-\eqref{star}. In each section we will point out the precise dependence of the constants. 
 
Existence of a non-degenerate viscosity solution (see Section 3 for the precise definition of non-degeneracy) is obtained by Perron's method. Under appropriate regularity assumptions on $D$, the free boundary of the Perron solution has finite Hausdorff dimension. Precisely, 
 
 \begin{thm}[Existence and Finite Hausdorff dimension]\label{ex_reg} Given $\phi \in C^{0,\alpha}(\p B_1)$, there exists a viscosity solution to \eqref{general} in $B_1$ with $w=\phi$ on $\p B_1$. Moreover, $w$ is non-degenerate and if the set $D$ is $C^2$ and convex then
 \be\label{haus}\mathcal H^{n-1}(F(w) \cap B_{1/2}) \leq C\ee for a $C>0$ universal (depending also on the $C^2$ norm of $f$.)
 \end{thm}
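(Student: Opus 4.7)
The plan is to obtain the solution through Perron's method, extract non-degeneracy from half-plane barriers, and derive the Hausdorff bound from two-sided density estimates that exploit the convexity of $D$.

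\textbf{Existence.} I would let $\mathcal F$ be the family of continuous nonnegative viscosity subsolutions $v$ of \eqref{general} in $B_1$ with $v \le \phi$ on $\p B_1$, and set
$$w(x) := \sup\{v(x) : v \in \mathcal F\}.$$
The family is nonempty (the zero extension of a harmonic minorant of $\phi$ belongs to it). For an a priori upper bound and attainment of the boundary values, I would construct a Hölder-continuous supersolution $W$ equal to $\phi$ on $\p B_1$, built from a radial profile whose gradient lies in $\bar D^c$; comparison gives $w \le W$, hence $w = \phi$ on $\p B_1$. The usual Perron dichotomy then shows that the upper/lower semicontinuous envelopes $w^*,w_*$ are respectively a viscosity sub- and supersolution of \eqref{general}. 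Inside $\{w>0\}$, the Hölder estimate of Imbert--Silvestre \cite{IS} applies to the degenerate equation and, combined with standard bootstrapping, upgrades $w$ to a classical solution there. At free boundary points the sub/super property of the gradient condition follows from the maximality of $w$ together with the standard touching argument: any smooth test function violating the supersolution inequality could be slid below $w$ to create a strictly larger admissible competitor, contradicting the definition of $w$; the subsolution inequality is inherited from each $v \in \mathcal F$.

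\textbf{Non-degeneracy.} I would slide in the explicit half-plane subsolutions $\psi_{t,\nu}(x) = f(\nu)\,(x\cdot\nu - t)^+$, whose gradient on the free boundary lies exactly on $\Gamma$ and which satisfy \eqref{general} trivially in their positivity set. Because $h \ge 0$ in $D$, small localized subharmonic perturbations remain admissible, and the maximality of $w$ produces the universal linear growth $\sup_{B_r(x_0)} w \ge c\, r$ at every $x_0 \in F(w) \cap B_{1/2}$, which is the non-degeneracy recorded in the statement.

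\textbf{Hausdorff estimate.} Combining the Lipschitz estimate $w \le C r$ in $B_r(x_0)$ with non-degeneracy gives the lower density
$$|B_r(x_0) \cap \{w>0\}| \ge c r^n, \qquad x_0 \in F(w) \cap B_{1/2}.$$
The complementary density $|B_r(x_0) \cap \{w=0\}| \ge c r^n$ is where the convexity and $C^2$-regularity of $D$ enter: these hypotheses allow the construction of a strict supersolution barrier at any free boundary point from a quadratic radial profile whose gradient stays outside $\bar D$, the star-shapedness being upgraded to a uniform geometric control through convexity. Comparison against $w$ forces a definite portion of $\{w=0\}$ in each small ball centered on $F(w)$. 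With two-sided density in hand, a Vitali covering of $F(w) \cap B_{1/2}$ by disjoint balls $\{B_r(x_i)\}_{i=1}^N$ satisfies $N \cdot c r^n \le |B_1|$, so $N \le C r^{-n}$; but because each complementary density $c r^n$ also sits disjointly in $B_1$, the sharper count $N \le C r^{-(n-1)}$ yields \eqref{haus}.

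The main obstacle will be the lower density of $\{w=0\}$. Without a variational framework, the Alt--Caffarelli flux identity is unavailable, so the convexity and $C^2$-regularity of $D$ must be exploited sharply to manufacture supersolution barriers whose gradient comfortably exits $\bar D$ near any direction of $\Gamma$; once that is achieved, the remaining pieces (Perron's construction, interior regularity via \cite{IS}, half-plane comparison, and the covering bound) follow standard templates for degenerate one-phase problems.
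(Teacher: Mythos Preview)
Your existence and non-degeneracy outlines differ from the paper's (the paper takes the \emph{infimum of supersolutions} rather than the supremum of subsolutions, and proves non-degeneracy by comparing with a radial supersolution barrier using minimality), but either Perron variant can in principle be made to work, so I will not dwell on that.

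The genuine gap is in your Hausdorff estimate. Two-sided density does \emph{not} yield finite $\mathcal H^{n-1}$ measure; it only gives porosity, hence Lebesgue measure zero (and Hausdorff dimension strictly below $n$). Your covering step is where this breaks: from $N$ disjoint balls $B_r(x_i)$ with both phases occupying volume $\ge c r^n$ in each, you only get $N\cdot c r^n \le |B_1|$, i.e.\ $N \le C r^{-n}$. The ``sharper count $N \le C r^{-(n-1)}$'' you assert does not follow; the zero-phase pieces already sit inside the same balls you are counting and give no additional disjointness. Without a flux or energy identity (which, as you note, is unavailable here), density estimates alone cannot close the argument.

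The paper's route is entirely different and is where the convexity and $C^2$ regularity of $D$ are actually used. One introduces the convex function $\eta(p) \sim \mathrm{dist}(p,D)$, first shows by a compactness/blow-up argument that $\eta(\nabla w)$ is small near $F(w)$, and then proves via the maximum principle applied to $\eta(\nabla w) - w^\xi - \varphi$ (here the convexity of $\eta$ makes $\Delta(\eta(\nabla w))$ bounded below by the right terms) that $\eta(\nabla w) \le w^\xi$. This feeds into an integral bound $\int_{\{w>0\}} \eta(\nabla w)^+/w \le C$, and then integrating $\Delta f(w)$ for a smoothing $f$ of $t^+$ together with non-degeneracy produces the level-set estimate $\eps^{-1}\int_{\{\eps<w<2\eps\}} |\nabla w|^2 \le C$, from which $\mathcal H^{n-1}(F(w)\cap B_{1/2}) \le C$ follows. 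Your barrier-and-covering scheme does not substitute for this mechanism.
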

 
In fact, estimate \eqref{haus} holds for any viscosity solution which is non-degenerate, as long as $D$ is convex and $C^2$ smooth.

 Concerning the regularity of viscosity solutions we prove the following.
 
 \begin{thm}[Lipschitz regularity]\label{LL} Let $w$ be a viscosity solution to \eqref{general} in $B_1$, and assume that $F(w) \cap B_{1/8} \neq \emptyset.$ Then $w$ is locally Lipschitz in $B_{1/2}$ with universal Lipschitz norm. Moreover, $w \in C^{2,\alpha}$  in $B_{1}^+(w),$ for some $0<\alpha<1$ universal.
\end{thm}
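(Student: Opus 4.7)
My plan is to combine four ingredients: a universal $L^\infty$ bound that uses the hypothesis $F(w)\cap B_{1/8}\ne\emptyset$; the uniform H\"older modulus of continuity supplied by the Imbert--Silvestre theory \cite{IS}; a linear growth estimate at free boundary points coming from the viscosity free boundary condition; and the Lipschitz scale invariance of \eqref{general}. The $C^{2,\alpha}$ regularity in $B_1^+(w)$ is then a routine consequence once Lipschitz is established.

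For the $L^\infty$ bound I would argue by contradiction. If $w(x_1)$ were very large at some $x_1\in B_{1/2}$, then on the component of $\{w>\tfrac12 w(x_1)\}$ containing $x_1$ the equation $\Delta w=h(\nabla w)/w$ is uniformly elliptic with controlled right-hand side, and a Harnack chain would propagate the lower bound $w\gtrsim w(x_1)$ through $B_{1/2}$, in particular to a neighborhood of the chosen free boundary point in $B_{1/8}$, contradicting $w=0$ there. Granted the $L^\infty$ bound and the Imbert--Silvestre H\"older estimate, I would next show $\sup_{B_r(x_0)} w\le Cr$ at every free boundary point $x_0\in B_{1/2}\cap F(w)$. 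This is the quantitative content of $\nabla w\in\Gamma$: if $w$ grew faster than linearly at $x_0$, one could slip a smooth $\psi^+$ beneath $w$ at $x_0$ with $|\nabla\psi(x_0)|>\delta^{-1}\ge f(\nu)$, violating the viscosity supersolution property. A compactness/contradiction argument exploiting the Lipschitz rescaling $\tilde w(x)=w(x_0+rx)/r$ upgrades this pointwise information into a uniform linear growth estimate with a universal constant.

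With the $L^\infty$ bound and linear growth from $F(w)$ in hand, the global Lipschitz bound follows from a standard interior rescaling. For $x\in B_{1/2}^+(w)$ with $d:=d(x,F(w))$, the linear growth yields $w\le Cd$ on $B_d(x)$ while $w>0$ there; the Lipschitz-invariant rescaling $\tilde w(y):=w(x+dy)/d$ then produces a bounded nondegenerate solution on $B_1$. Interior gradient estimates for the quasilinear equation $\Delta\tilde w=h(\nabla\tilde w)/\tilde w$ (which is uniformly elliptic on the set $\{\tilde w\ge c_0\}$ with bounded right-hand side) yield $|\nabla\tilde w(0)|\le C'$, hence $|\nabla w(x)|\le C'$.

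Finally, for $C^{2,\alpha}$ on any $K\subset\subset B_1^+(w)$, the function $w$ is bounded above and below and Lipschitz, so $h(\nabla w)/w$ is bounded measurable; Krylov--Safonov applied to the derivatives gives $\nabla w\in C^{0,\alpha}(K)$, and since $h\in C^1$ the right-hand side is then $C^\alpha$, so Schauder estimates yield $w\in C^{2,\alpha}(K)$. I expect the main technical obstacle to be the linear growth at free boundary points: the barrier construction and the viscosity comparison must be arranged in a genuinely scale-invariant fashion so that the estimate degrades only through the universal constants $\delta$, $\|f\|_{C^1}$, and $C_h$, and not through auxiliary scales that would be lost under Lipschitz rescaling.
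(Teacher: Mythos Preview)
Your architecture matches the paper's: interior H\"older regularity in $\{w>0\}$, linear growth from the free boundary via a barrier, Lipschitz by rescaling plus an interior gradient estimate, and finally $C^{2,\alpha}$ by Schauder. However, two of your intermediate steps have genuine gaps that the paper fills with nontrivial lemmas.

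First, your $L^\infty$/Harnack step is incomplete. You propose a Harnack chain on the component of $\{w>\tfrac12 w(x_1)\}$ containing $x_1$, but you do not control the geometry of that component, and more seriously, even there the right-hand side $h(\nabla w)/w$ is \emph{not} bounded: the hypothesis $h(p)\ge -C_h|p|^2$ forces quadratic gradient growth from below, so standard Harnack does not apply. The same issue undermines your linear-growth barrier: to slide a subsolution $\psi^+$ beneath $w$ and force contact at a free boundary point, you need $w$ large on a full sphere, not just at one interior point, and that requires a Harnack-type inequality you have not established. The paper's fix is Lemma~\ref{sup_inf}: the Imbert--Silvestre $L^\eps$ estimate handles the supersolution side (since $\Delta w\le 1$ when $|\nabla w|$ is large), while the key observation that $(w-\sigma)^\gamma$ is \emph{subharmonic} for $\gamma$ large (precisely absorbing the $-C_h|\nabla w|^2$ term) handles the subsolution side via weak Harnack. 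This yields $\sup_{B_{1/2}}(w-\sigma)\le C(1+\inf_{B_{1/2}}(w-\sigma))$, which then drives both the H\"older estimate (Proposition~\ref{holder}) and the linear-growth barrier (Proposition~\ref{lg}).

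Second, your ``interior gradient estimate'' for the rescaled $\tilde w$ is not routine. You call the right-hand side bounded on $\{\tilde w\ge c_0\}$, but in fact $|h(\nabla\tilde w)/\tilde w|\le C(1+|\nabla\tilde w|^2)$, i.e.\ natural quadratic growth in the gradient, and a priori gradient bounds for such equations are not free. The paper addresses this with a separate perturbation lemma (Lemma~\ref{delta1}): once the H\"older estimate pins $\tilde w$ between $1/2$ and $2$ on a small ball, a further rescaling $\bar w=(\tilde w(\rho\,\cdot)-1)/(C\rho^\alpha)$ satisfies $|\Delta\bar w|\le\eta$ whenever $|\nabla\bar w|\le 1/\eta$, and a compactness argument against harmonic functions then gives $C^{1,\alpha}$. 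Your Krylov--Safonov-then-Schauder bootstrap to $C^{2,\alpha}$ is fine once the gradient is bounded, and matches the paper's remark after Theorem~\ref{lipth}.
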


Finally, we provide an improvement of flatness lemma which leads to the following ``flatness implies $C^{1,\alpha}$"  type result. The strategy follows the lines of \cite{D}.

\begin{thm}[Flatness implies regularity]\label{flat_thm} Let $w$ be a viscosity solution to \eqref{general} in $B_1$, with $0 \in F(w)$.  There exists $\eps_0$ universal, such that if  $w$ is $\eps$-flat, i.e.
\be\label{ft}
(f(\nu)x\cdot \nu-\eps)^+ \leq w(x) \leq (f(\nu)x \cdot \nu+\eps)^+, \quad \text{in $B_1$}, \quad \eps \leq \eps_0,\ee
then $F(w) \cap B_{1/2}$ is $C^{1,\alpha}$ graph in the $\nu$ direction with norm bounded by $C \eps$, for a universal $0 < \alpha <1$. \end{thm}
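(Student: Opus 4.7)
The plan is to adapt the contradiction-and-compactness framework of \cite{D} to the degenerate setting of \eqref{general}. The heart of the argument will be an improvement-of-flatness lemma of the following form: there exist universal constants $\eps_0$, $r_0\in(0,1/2)$ and $C>0$ such that whenever $w$ satisfies \eqref{ft} with $\eps\le\eps_0$, the function $w$ is $\eps r_0/2$-flat in $B_{r_0}$ with respect to a new direction $\nu_1\in\mathbb S^{n-1}$ satisfying $|\nu_1-\nu|\le C\eps$. Iterating this lemma at the geometric scales $r_0^k$, and exploiting the Lipschitz rescaling $\tilde w(x)=w(rx)/r$ which preserves \eqref{general}, will transfer flatness to every scale and yield the $C^{1,\alpha}$ graph description of $F(w)\cap B_{1/2}$ with norm $\le C\eps$.

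To prove the improvement of flatness I would argue by contradiction. Assume a sequence $w_k$ with $\eps_k\to 0$ violates the conclusion and form the normalized differences
\[
v_k(x):=\frac{w_k(x)-f(\nu)\,x\cdot\nu}{\eps_k}, \qquad x\in \overline{\{w_k>0\}}\cap B_{1/2}.
\]
The first key step is a partial Harnack inequality adapted to \eqref{general}: for $\eps$ sufficiently small, the flatness strip around the half-plane profile $f(\nu)\,x\cdot\nu$ can be shrunk by a definite factor $(1-c)$ on a smaller ball. This will force $v_k$ to be uniformly H\"older continuous up to the free boundary $F(w_k)$, and Arzel\`a--Ascoli will produce a limit $v_\infty$ on $\{x\cdot\nu>0\}\cap B_{1/2}$.

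Next I would identify the linearized problem satisfied by $v_\infty$. Formally expanding \eqref{weq} around the half-plane profile $u_0(x)=f(\nu)(x\cdot\nu)^+$ and using $h\equiv 0$ on $\Gamma$ leads to the degenerate drift equation
\[
\triangle v_\infty \;=\; \frac{\nabla h(f(\nu)\nu)\cdot\nabla v_\infty}{f(\nu)\,x\cdot\nu} \qquad\text{in } \{x\cdot\nu>0\}\cap B_{1/2}.
\]
By \eqref{sign_h}--\eqref{star} the vector $\nabla h(f(\nu)\nu)$ points into $D$ with a strictly negative $\nu$-component, and multiplying through by an appropriate power of $x\cdot\nu$ rewrites the equation in the divergence form $\operatorname{div}((x\cdot\nu)^s\nabla v_\infty)=0$ for some $s>0$ (in the Alt--Phillips case, $s=2(\beta-1)$). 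Linearizing the free boundary condition $\nabla w\in\Gamma$ on $\{x\cdot\nu=0\}$ yields an oblique Neumann condition whose coefficients record the shape of $\Gamma$ at $f(\nu)\nu$; it collapses to $\p_\nu v_\infty=0$ in the rotationally symmetric case. The resulting conormal problem for a weighted elliptic equation admits a boundary $C^{1,\alpha}$ theory, so $v_\infty$ is approximated near the origin by a linear function $\ell$; pulling $\ell$ back through the rescaling produces the improved direction $\nu_1$ for $w_k$ at large $k$, contradicting the assumption.

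The hard part will be the partial Harnack inequality. In contrast to the classical one-phase problem, the interior equation in \eqref{weq} is nonlinear and doubly degenerate -- at $\{w=0\}$ and as $\nabla w$ approaches $\Gamma$ -- so linear barriers are not available off the shelf. My plan is to build sub- and supersolutions by adding to $f(\nu)\,x\cdot\nu$ a small multiple of an explicit solution of the linearized operator, using \eqref{sign_h} and the star-shapedness \eqref{star} to guarantee that these barriers satisfy the free boundary inequality on the correct side. The Lipschitz and interior $C^{2,\alpha}$ regularity of $w$ provided by Theorem \ref{LL} (which in turn rests on the Imbert--Silvestre estimate) will then be essential to pass to the limit in the sequence $\{v_k\}$ and close the compactness argument.
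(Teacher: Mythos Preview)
Your outline matches the paper's strategy almost exactly: improvement of flatness by contradiction and compactness, a partial Harnack inequality obtained from explicit barriers of the form $f(\nu)\,x\cdot\nu+\eps\,c\,Q$, and $C^{1,\alpha}$ regularity of the limit $v_\infty$ read off from the linearized problem. The iteration to $C^{1,\alpha}$ of $F(w)$ is exactly as you describe.

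One technical point needs correction. You assert that the linearized interior equation can be rewritten in the weighted divergence form $\operatorname{div}\bigl((x\cdot\nu)^{s}\nabla v_\infty\bigr)=0$. This is true only when the drift $-\nabla h(f(\nu)\nu)$ is parallel to $\nu$, i.e.\ when the outer normal $\omega$ to $\Gamma$ at $f(\nu)\nu$ equals $\nu$. In general \eqref{star} guarantees only $\omega\cdot\nu\ge\delta$, and the limiting equation is
\[
\Delta v_\infty \;+\; |\nabla h(f(\nu)\nu)|\,\omega\cdot\frac{\nabla v_\infty}{x\cdot\nu}\;=\;0,
\]
with an \emph{oblique} first-order term that cannot be absorbed into a scalar $A_2$ weight. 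The paper handles this (Section~7) by the shear $\tilde v(x',x_n)=v_\infty\bigl(x'+\tfrac{\omega'}{\omega_n}x_n,\,x_n\bigr)$ (taking $\nu=e_n$), which straightens the drift to $s\,\tilde v_n/x_n$ with $s=|\nabla h|\,\omega_n\ge 0$, at the price of replacing $\Delta$ by a constant-coefficient uniformly elliptic operator; the boundary $C^{1,\mu}$ estimate is then proved directly for this non-divergence problem with the Neumann-type condition $\tilde v_s=0$. Your oblique boundary condition is the right one and survives this change of variables. With this adjustment your plan goes through and coincides with the paper's argument.
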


This theorem gives the regularity of the reduced boundary $\p ^*\{w>0\} \subset F(w)$ for solutions satisfying \eqref{haus} since, after a sufficiently large dilation, the flatness hypothesis \eqref{ft} is guaranteed (see Lemma \ref{reduced} in Section 4).

The paper is organized as follows. In Section 2 we provide the proof of the existence statement in Theorem \ref{ex_reg}. The following section is dedicated to Theorem \ref{LL}, while measure theoretic properties of the free boundary are determined in Section 4, completing the proof of Theorem \ref{ex_reg}.
The last three sections are devoted to Theorem \ref{flat_thm}. Precisely, in Section 5 we obtain a Harnack type inequality for ``flat" solutions of \eqref{general}. This is the key ingredient which allows us to use a linearization method to obtain in Section 6 an improvement of flatness lemma. Finally the last section is dedicated to the linear problem associated to \eqref{general}.

\section{Existence} 

In this section we use Perron's method to prove the existence of a non-degenerate viscosity solution to \eqref{general}, with a given boundary data.

Let $\phi \geq 0$ be a $C^{0,\alpha}$ function on $\p B_1$. We claim that, by choosing $\alpha$ possibly smaller, the functions
$$\psi_\phi (x):= \inf_{x_0 \in \p B_1}(\phi(x_0) + C |(x-x_0) \cdot \nu_{x_0}|^{\alpha/2}), \quad x\in \bar B_1,$$
and
$$\varphi_\phi (x):= \sup_{x_0 \in \p B_1}(\phi(x_0) - C |(x-x_0) \cdot \nu_{x_0}|^{\alpha/2})^+, \quad x \in \bar B_1,$$
are respectively a supersolution and a subsolution to \eqref{general}. Moreover, it easily follows that
\begin{equation}\label{bdata}\psi_\phi=\varphi_\phi=\phi \quad \text{on $\p B_1$,}\end{equation}
provided that we choose $C$ large, depending on the $C^{0,\alpha}$norm of $\phi.$ Here $\nu_{x_0}$ is the outer unit normal to $\p B_1$ at $x_0$.

We prove the first claim. It is readily seen that the infimum of a family of supersolutions is again a supersolution, thus it is enough to show that 
$$\Psi(x):= \phi(x_0) + C |(x-x_0) \cdot \nu_{x_0}|^{\alpha/2}, \quad x_0 \in \p B_1,$$ is a supersolution.
After a change of coordinates, let us assume that $x_0=0, \nu_{x_0} = e_n.$ Then, using the quadratic bound \eqref{growth2} of $h$, and assumption \eqref{sign_h}, we get 
$$\Delta \Psi(x)= C \frac{\alpha}{2}(\frac \alpha 2 -1) x_n^{\frac \alpha 2 - 2} \leq \frac{1}{C x_n^{\frac \alpha 2}}h(C \frac \alpha 2 x_n^{\frac \alpha 2 -1}) \leq  \frac{h(\nabla \Psi)}{\Psi},$$
as long as $C$ is large enough so that $\nabla \Psi \not \in D$, and $\alpha$ is small enough so that,
$$ \frac{\alpha}{2}(\frac \alpha 2 -1) \leq - \frac 1 4 C_h  \alpha^2.$$

The second claim follows similarly by noticing that
$$\Phi(x):=  \left(\phi(x_0) - C |(x-x_0) \cdot \nu_{x_0}|^{\alpha/2}\right)^+,$$
is a subsolution for our problem \eqref{general} since $\nabla \Phi \notin D$ and
$$\triangle \Phi >0 \ge \frac{h(\nabla \Phi)}{\Phi}.$$

\smallskip

We can now prove our existence theorem. We refer to the solution achieved in the following theorem as the Perron solution associated to $\phi.$

\begin{thm} Let $$A:=\{ \psi \in C(\bar B_1): \psi \leq \psi_\phi \ \text{is a supersolution to \eqref{general}, $\psi=\phi$ on $\p B_1$}\}$$ and set  $$w(x):= \inf_{\mathcal A}\psi(x).$$ Then $w \in C(\bar B_1)$ is a viscosity solution to \eqref{general} in $B_1$, with 
$w=\phi$ on $\p B_1.$
\end{thm}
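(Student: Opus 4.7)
The plan is a classical Perron's method adapted to the degenerate free boundary problem \eqref{general}: verify that $A$ is nonempty with the correct boundary values for $w$, then show $w$ is simultaneously a viscosity supersolution and subsolution, the latter by the standard ``bump'' argument.

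\emph{Class $A$, boundary values, continuity.} Since $\psi_\phi \in A$ by the preceding claim, the class is nonempty and $0 \le w \le \psi_\phi$. For the lower bound $w \ge \varphi_\phi$, I would argue by a standard sliding/comparison argument: any $\psi \in A$ is a supersolution with the same boundary data $\phi$ as the strict subsolution $\varphi_\phi$, and pushing $\varphi_\phi - c$ up against $\psi$ and taking $c \searrow 0$ yields $\varphi_\phi \le \psi$. Combined with \eqref{bdata}, the sandwich $\varphi_\phi \le w \le \psi_\phi$ gives $w = \phi$ continuously on $\partial B_1$. Interior continuity will follow from the sub/super solution properties: as an infimum of continuous functions $w$ is upper semicontinuous, and the subsolution property established below will force it to be lower semicontinuous as well.

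\emph{Supersolution property.} The infimum of a family of viscosity supersolutions is a viscosity supersolution. Concretely, if $\psi \in C^2$ touches $w$ (or $\psi^+$ touches $w$ at a free boundary point $x_0 \in F(w)$) strictly from below at $x_0$, pick $\psi_k \in A$ with $\psi_k(x_0) \to w(x_0)$; a small downward shift of $\psi$ then touches $\psi_k$ from below at some $x_k \to x_0$, and the supersolution inequalities of $\psi_k$ pass to the limit by continuity of $h$ and $f$. This covers both the interior inequality $\Delta w \le h(\nabla w)/w$ in $\{w>0\}$ and the free boundary condition $\nabla \psi(x_0) \in \bar D$ at $x_0 \in F(w)$.

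\emph{Subsolution property --- the main obstacle.} Suppose the subsolution property fails at some $x_0 \in B_1$. In the routine interior case, $x_0 \in \{w>0\}$ and a $C^2$ function $P$ touches $w$ strictly from above at $x_0$ with $\Delta P < h(\nabla P)/P$ in a small ball $U \ni x_0$; replacing $P$ by $\tilde P = P - \eta$ for small $\eta > 0$ preserves the strict supersolution inequality on $U$ (since $h/t$ is smooth at $t = P(x_0) > 0$), and gluing $\min(w,\tilde P)$ on $U$ with $w$ outside produces an element of $A$ strictly below $w$ at $x_0$, contradicting $w = \inf A$. The delicate case is $x_0 \in F(w)$: here a test function $\psi$ with $\psi^+$ touching $w$ strictly from above at $x_0$ satisfies $|\nabla \psi(x_0)| \neq 0$ and $\nabla \psi(x_0) \in D$ strictly. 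One must construct a perturbation $\tilde \psi$ whose zero set is pushed strictly inward of $F(\psi)$ in a neighborhood of $x_0$, whose gradient on $\{\tilde\psi > 0\}$ still verifies the strict interior supersolution inequality, and whose gradient on the new free boundary remains in $\bar D$; gluing $\min(w, \tilde\psi^+)$ near $x_0$ then produces a member of $A$ strictly below $w$ at $x_0$, again a contradiction. The main obstacle lies precisely in this simultaneous control, because the right-hand side $h(\nabla w)/w$ blows up as one approaches the new free boundary. The quantitative slack that makes the construction possible is the strict condition $\nabla \psi(x_0) \in D$, which gives $h(\nabla \psi) \ge c_0 > 0$ in a neighborhood; combined with the $C^1$ regularity of $\Gamma$ and the star-shapedness \eqref{star} of $D$, one builds $\tilde \psi$ as a small inward translate of $\psi$ plus a quadratic correction localized in $U$, and checks directly that both the gradient and the degenerate Laplace inequality remain valid throughout $U$.
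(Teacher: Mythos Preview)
Your outline captures the Perron architecture, but there is a genuine gap in the continuity step, and this is precisely where the paper invests the most work.

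You write that interior continuity ``will follow from the sub/super solution properties: as an infimum of continuous functions $w$ is upper semicontinuous, and the subsolution property established below will force it to be lower semicontinuous as well.'' This is circular. The very notion of viscosity sub/supersolution in Definition~1.1 and the touching arguments you invoke presuppose that $w$ is continuous; you cannot use the subsolution property to manufacture lower semicontinuity after the fact. The standard rescue in abstract Perron arguments is to run the bump lemma on the upper envelope $w^*$ and the supersolution test on the lower envelope $w_*$, and then close the loop with a comparison principle $w^* \le w_*$. But no such comparison principle is available here (nor is one proved anywhere in the paper): the right-hand side $h(\nabla w)/w$ is degenerate at the free boundary, and the free boundary condition is one-sided, so a naive doubling-of-variables proof does not go through.

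The paper's route around this is quite different and is the technical heart of the proof: for each $\psi \in A$ it builds the $x$-variable inf-convolution
\[
\bar\psi(x) = \inf_{y \in \bar B_1}\bigl(\psi(y) + 2C|x-y|^{\alpha/2}\bigr),
\]
and checks directly that $\bar\psi$ is again in $A$. The verification that $\bar\psi$ is still a supersolution of the interior equation uses in an essential way the sign structure \eqref{sign_h}: when the touching point $y_0$ differs from $x_0$, the test gradient automatically lies outside $D$, so $h(\nabla P)\le 0$ and the denominator shift $P(x_0)-2C|\eta|^{\alpha/2}\le P(x_0)$ goes the right way. This gives a uniformly H\"older subfamily with the same infimum, hence $w$ is continuous a priori, and the rest of the argument is then on solid ground. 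Your proposal omits this step entirely.

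A secondary point: in the free boundary bump you claim ``$\nabla\psi(x_0)\in D$ gives $h(\nabla\psi)\ge c_0>0$''. This is false under the hypotheses of the paper---only $h\ge 0$ in $D$ is assumed, and $h$ may well vanish in the interior of $D$. The paper's barrier
\[
g(x) = P(x) + C\eps\, d(x) - C d(x)^2 + |x-x_0|^2 - \eps^3,
\]
with $d$ the signed distance to $F(P)$, is engineered so that $\Delta g \le 0$ (not strictly) and $\nabla g \in D$; the supersolution inequality $\Delta g \le 0 \le h(\nabla g)/g$ then follows from $h\ge 0$ on $D$, with no need for a strictly positive lower bound on $h$. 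Your ``inward translate plus quadratic correction'' sketch does not obviously achieve $\Delta\le 0$, and the slack you cite is not actually there.
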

\begin{proof}  First, since  $\psi_\phi \in \mathcal A$, $w$ is well defined. Furthermore, by the maximum principle it is easily seen that each $\psi \in \mathcal A$ satisfies $$\psi \geq \varphi_\phi.$$ Indeed, this follows from the fact that $\Phi -t$, with $\Phi$ as above ant $t >0$, cannot touch a supersolution $\psi$ by below.

Now we show that we can restrict the minimization to elements in $\mathcal A$ that are uniformly H\"older continuous of exponent $\alpha/2$. Precisely, for each $\psi \in \mathcal A$ we can construct another element of $\mathcal A$, $\bar \psi \le \psi$ which is uniformly H\"older continuous, and is given by the inf-convolution 
$$\bar \psi(x) : = \inf_{y \in \bar B_1}(\psi(y) + 2C|x-y|^{\alpha/2}) \quad x \in \bar B_1.$$ 
Clearly $\bar\psi \le \psi.$ We claim that $\bar \psi \in \mathcal A$. 

To show this, 
first notice that since $\psi_\phi \geq \psi \geq \varphi_\phi$, and \eqref{bdata} holds, then for all $y \in \bar B_1,$
\be\label{11}\psi(y) + 2C|x-y|^{\alpha/2} \geq \phi(x) \geq \psi(y) -2C |x-y|^{\alpha/2}, \quad \text{if $x \in \p B_1$}\ee
with strict inequality if $y \in B_1.$

From this we deduce that
$$\bar \psi = \phi \quad \text{on $\p B_1,$}$$
and moreover if 
\be\label{y0}
\psi(y_0)= \bar \psi (x_0) - 2C|x_0 -y_0|^{\alpha/2}, \quad x_0 \in B_1
\ee 
then
$$y_0 \in B_1.$$

Now we prove that $\bar \psi$ satisfies the equation, 
$$\Delta \bar \psi \leq \frac{h(\nabla \bar \psi)}{\bar \psi}, \quad \text{in $B^+_1(\bar \psi).$}$$ A similar argument holds for the free boundary condition.
To this aim, let $P$ be a quadratic polynomial touching $\bar \psi$ (strictly) by below at $ x_0 \in B_1^+(\bar \psi),$ and let us show that 
$$\Delta P( x_0) \leq  \frac{h(\nabla P( x_0))}{P( x_0)}.$$
Let $y_0$ be as in \eqref{y0}. We distinguish two cases. If $y_0=x_0,$ the claim is obvious since $P$ touches also $\psi$ by below at $x_0$. Otherwise, notice that 
\begin{equation}\label{grad}\nabla P(x_0) \not \in D,\end{equation} let $\eta:= x_0 - y_0$ and set,
$$P_\eta(x):= P(x+\eta)- 2C|\eta|^{\alpha/2}.$$
It is easily verified that $P_\eta$ touches $\psi$ by below at $y_0$,  hence in view of \eqref{grad},
we conclude $y_0 \in B_1 \cap \{\psi >0\}$. Thus,
$$\Delta P(x_0)=\Delta P_\eta (y_0) \leq \frac{h(\nabla P_\eta (y_0))}{P_\eta (y_0)} =\frac{h(\nabla P(x_0))}{P(x_0) -2C|\eta|^{\alpha/2}} \leq \frac{h(\nabla P(x_0))}{P(x_0)},$$ where in the last inequality we used again \eqref{grad} and \eqref{sign_h}.

The claim is proved and we conclude that the function $w$ defined above is also a H\"older continuous supersolution which coincides with $\phi$ on the boundary. It remains to show that $w$ is a subsolution. 

Let $P$ be a quadratic polynomial touching $w$ strictly by above at $x_0 \in B^+_1(w),$ and assume by contradiction that $$\Delta P(x_0) < \frac{h(\nabla P(x_0))}{P(x_0)}.$$ Then, in a small neighborhood $B_\rho$ of $x_0$, $P>0$ and 
$$\Delta P < \frac{h(\nabla P)}{P}.$$
Then for $\eps>0$ small, $$\psi := \begin{cases}
w \quad \text{in $B_1 \setminus \bar B_\rho$}\\
\min\{w, P-\eps |x-x_0|^2\} \quad \text{in $B_\rho,$}\end{cases}$$ belongs to $\mathcal A$ and it is strictly below $w$. This contradicts the minimality of $w.$

Now we check the free boundary condition. If $P^+$ touches $w$ by above at $x_0 \in F(w) \cap F(P)$, with $P\in C^2$, we need to show that $\nabla P(x_0) \not \in D.$ Suppose not, and let
$$g(x):= P(x) + C\eps d(x) - Cd^2(x)+ |x-x_0|^2 - \eps^3, \quad x\in B_\eps(x_0)$$
 with $d(x)$ the signed distance from $x$ to $F(P)$, positive in $\{P>0\}.$
 The constant $C>0$ is chosen large enough so that
 $$\Delta g \leq 0 \quad \text{in $B_\eps(x_0) \cap \{g>0\}$},$$ 
 while $\eps$ is small enough so that 
 $$\nabla g (x) \in D, \quad x \in B_\eps(x_0).$$
 Notice that 
 $$g^+ \geq 0=P^+, \quad \text{in $\{d \leq 0\} \cap B_\eps(x_0)$},$$
while 
$$g > P, \quad \text{on $\{d>0\} \cap \p B_\eps(x_0)$.}$$
Thus $g^+$ is a supersolution in $B_\eps(x_0)$ and $g^+ \geq w$ on $\p B_\eps(x_0)$. We conclude that 
$$\psi:=\begin{cases}
w \quad \text{in $B_1 \setminus \bar B_{\eps}(x_0),$}\\
\min\{g^+, w\} \quad \text{in $B_\eps(x_0)$,}\end{cases}$$ is still a supersolution which is less than $w$. Since (for $\eps$ small) $g^+ \equiv 0$ in a small neighborhood of $x_0 \in F(w)$, $\psi$ does not coincide with $w$, and this contradicts the minimality of $w$.

\end{proof}

We conclude this section with a form of non-degeneracy satisfied by our solutions. 

\begin{prop}
Let $w$ be the Perron solution  in $B_1$, such that $0 \in F(w)$. Then,
$$\max_{\p B_r} w \geq c r, \quad r< 1,$$ with $c>0$ universal.
\end{prop}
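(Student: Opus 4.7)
The plan is to exploit the minimality of the Perron solution by producing a smaller admissible supersolution whenever $\max_{\partial B_r} w$ falls below the desired bound. Suppose for contradiction that $\max_{\partial B_r} w < cr$ for a universal constant $c>0$ to be fixed. I will construct a supersolution $\Psi_r$ of \eqref{general} on $\bar B_r$ that vanishes identically on $\bar B_{r/2}$ yet exceeds $cr$ on $\partial B_r$; setting $\bar w:=\min(w,\Psi_r)$ in $\bar B_r$ and $\bar w:=w$ elsewhere in $B_1$, I will check that $\bar w\in\mathcal A$ and that $\bar w(y)=0<w(y)$ at some $y\in B_{r/2}$, contradicting $w=\inf_{\mathcal A}\psi$.

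By the Lipschitz scaling $\Psi_r(x):=r\,\Psi(x/r)$ of \eqref{general} it suffices to construct $\Psi$ on $\bar B_1$. I take the radial harmonic barrier on the annulus $\{1/2\leq|x|\leq 1\}$,
$$
\Psi(x):=a\bigl((1/2)^{2-n}-|x|^{2-n}\bigr)\quad (n\geq 3),
$$
with the analogous $a\log(2|x|)$ in $n=2$, extended by zero on $\bar B_{1/2}$, and fix the amplitude $a>0$ small enough, depending only on $n$ and $\delta$, so that $\sup|\nabla\Psi|\leq\delta$. By \eqref{fnu} this forces $\nabla\Psi\in\bar B_\delta\subset\bar D$ throughout the annulus, whence $h(\nabla\Psi)\geq 0$ by \eqref{sign_h}; combined with $\Delta\Psi=0$ and $\Psi>0$ there, the interior supersolution inequality $\Delta\Psi\leq h(\nabla\Psi)/\Psi$ is trivial. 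The free boundary condition at $x_0\in\partial B_{1/2}$ reduces to a one-sided gradient bound: any $C^2$ test $\phi$ with $\phi^+$ touching $\Psi$ from below at $x_0$ must have its free boundary tangent to $\partial B_{1/2}$ (otherwise $\phi^+$ would be strictly positive in some direction into $B_{1/2}$ where $\Psi\equiv 0$), and then $|\nabla\phi(x_0)|\leq|\nabla\Psi(x_0^+)|\leq\delta\leq f(\nu)$. Setting $c_0:=\min_{\partial B_1}\Psi>0$ and $c:=c_0/2$ gives the desired universal threshold.

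By hypothesis $\Psi_r\geq c_0 r>\max_{\partial B_r}w$, so $\bar w$ is continuous, agrees with $w$ on $\partial B_r$ and with $\phi$ on $\partial B_1$, and satisfies $\bar w\leq w\leq\psi_\phi$. Inside $B_r$, $\bar w$ is a supersolution as the pointwise minimum of two supersolutions (touches from below at the minimum value are inherited by whichever function realizes it). Across $\partial B_r$, any test function touching $\bar w$ from below at a point $x_0\in\partial B_r$ also touches $w$ from below there, since $\bar w\leq w$ with equality on $\partial B_r$; the supersolution property at $x_0$ is then inherited from $w$. Hence $\bar w\in\mathcal A$, so $\bar w\geq w$ pointwise. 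On the other hand, because $0\in F(w)=\partial\{w>0\}\cap B_1$ and $w$ is continuous, there is $y\in B_{r/2}$ with $w(y)>0$; at such $y$, $\bar w(y)=\Psi_r(y)=0<w(y)$, the desired contradiction.

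The main delicate point is confirming that the viscosity supersolution property at free boundary points survives both the minimum operation inside $B_r$ and the gluing across $\partial B_r$; this is the standard touching-inheritance argument sketched above. The remaining ingredients reduce to an explicit computation with a radial harmonic function whose gradient is kept safely inside $\bar D$ by the universal smallness of $a$.
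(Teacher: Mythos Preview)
Your proof is correct and follows essentially the same approach as the paper's. Your radial harmonic barrier $\Psi$ is exactly the paper's $v$ (with your $a$ playing the role of the paper's $\bar c$), the min construction is identical, and the contradiction with the Perron minimality is derived in the same way; you are simply more explicit about the gluing across $\partial B_r$ (which in fact is a non-issue, since $\Psi_r>w$ strictly on $\partial B_r$ forces $\bar w=w$ in a full neighborhood of $\partial B_r$) and you work directly at scale $r$ rather than first rescaling to the unit ball.
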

\begin{proof} By rescaling, it is enough to show that
$$\max w \geq c \quad \text{on $\p B_1$},$$ for some $c>0$ universal to be specified later.
Assume not, and let $v$ be defined as (say $n>2$)
$$v= \bar c ( (1/2)^{2-n}-|x|^{2-n}), \quad \text{for $|x| \geq 1/2$},$$ extended to zero in $B_{1/2}$,
with $\bar c$ sufficiently small universal such that $|\nabla v|(x) \in D$ for $x \in \p B_{1/2}$. We then conclude that 
$v$ is a supersolution to \eqref{general} in $B_1$. By choosing $c$ small enough we guarantee that $v \geq c > w$ on $\p B_1.$ Thus,
$$\psi:=\min\{w, v\} $$ is a supersolution to \eqref{general} with the same boundary data as $w$, and by the minimality of $w$ we find $\psi=w$. On the other hand, $w=\psi \equiv 0$ in $B_{1/2}$ and we contradict that $0 \in F(w)$.
\end{proof}

%In view of Theorem \ref{T7}, the proof of our main result Theorem \ref{ex_reg} is concluded.

\section{Lipschitz regularity}

In this section we show that any viscosity solution to \eqref{general} is uniformly Lipschitz continuous near the free boundary. 
The strategy is to show first the H\"older continuity of the solutions in the set where $w$ is positive and then to use the free boundary condition and the scaling of the equation to obtain the Lipschitz continuity. 

The only hypotheses needed in this section are (with $D \subset B_M$ for some large $M>0,$)
\be\label{hl}
-C_h|p|^2 \le h(p) \le C \chi_{B_M}.
\ee
The starting point is that $w$ is superharmonic when $|\nabla w|$ is large, and the we can use the $L^\eps$ estimate due to Imbert and Silvestre \cite{IS} for supersolutions of uniformly elliptic equations that hold only for large gradients. 
%Constants depending only on $n, D,$ and  $h,$ are called universal.
First we recall the following Theorem 5.1 from \cite{IS}, for the special case of the Laplace operator.

\begin{thm}[Imbert-Silvestre]
\label{SI} There exist small constants $\eta_0, \xi >0$, such that if $u \in C(B_1)$ is a non-negative function satisfying (in the viscosity sense)
$$\Delta u \leq 1 \quad \text{in $B_1 \cap \{|\nabla u| \geq \eta_0\}$},$$
and $\inf_{B_{1/2}}u \leq 1,$ then 
\be\label{SI_bound} \|u\|_{L^{\xi}(B_{1/2})} \leq C.\ee  with $C, \eta_0,\xi$ depending only on the dimension $n$.
\end{thm}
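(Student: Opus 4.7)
Since this is stated as Theorem 5.1 of Imbert--Silvestre and cited verbatim, the plan is to recapitulate their argument, which adapts the classical Krylov--Safonov weak Harnack inequality (the $L^\varepsilon$-estimate for nonnegative supersolutions) to equations that are only assumed to hold in the region of large gradient. The output is a measure-theoretic decay of the super-level sets of $u$, which is equivalent to an $L^\xi$ bound, and the central technical object is a modified Alexandrov--Bakelman--Pucci inequality.

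The heart of the proof is this modified ABP estimate. In the standard argument one slides concave paraboloids of opening $M$ from below the graph of $u$ and bounds the measure of the contact set using $\Delta u \le 1$ there. The obstacle here is that at a contact point $y$ one only knows the gradient $|\nabla u(y)|$ is bounded by something like $CM$, and the hypothesis gives no information unless $|\nabla u(y)|\ge \eta_0$. Imbert and Silvestre bypass this by working with tilted paraboloids of the form $-\tfrac{M}{2}|x-x_0|^2 + p\cdot x$, letting the tilt $p$ range over a ball of radius comparable to $M$. On the contact set associated with a given $p$, the gradient $\nabla u$ is forced to sit near $p$, so restricting attention to $|p|\gg \eta_0$ automatically guarantees the PDE activates and yields the Hessian bound $\det D^2 u \lesssim 1$. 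Integrating this estimate in $p$ produces the necessary measure bound on the union of contact sets, and hence the ABP-type inequality on $\{u \le t\}$.

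With the modified ABP in hand, one runs the standard Calder\'on--Zygmund cube decomposition: if $u>1$ on a definite fraction of a dyadic cube $Q$, then $u$ is bounded below on a definite fraction of a larger cube, giving the dichotomy that propagates through dyadic scales. Iterating this, combined with the normalization $\inf_{B_{1/2}} u \le 1$ which starts the induction, yields the geometric decay $|\{u>M^k\}\cap B_{1/2}| \le (1-\mu)^k$, equivalent to $\|u\|_{L^\xi(B_{1/2})}\le C$ for $\xi = -\log(1-\mu)/\log M$. The main obstacle is precisely scaling: at each dyadic stage the paraboloid opening and cube size are rescaled, and naively this enlarges the effective threshold $\eta_0$ relative to the rescaled function, breaking the induction. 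The tilt parameter $p$ introduced above is exactly what decouples the iteration from the gradient threshold, allowing $\eta_0$ and $\xi$ to depend only on $n$.
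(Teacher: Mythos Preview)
The paper does not prove this theorem at all: it is quoted verbatim as Theorem~5.1 from Imbert--Silvestre \cite{IS} and then used as a black box in Lemma~\ref{sup_inf}. So there is no ``paper's own proof'' to compare against.

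Your sketch of the Imbert--Silvestre argument is a fair high-level description of their method (tilted paraboloids to force the contact set into the large-gradient region, a modified ABP estimate, then the usual Krylov--Safonov/Calder\'on--Zygmund iteration). As a summary it is accurate in spirit; if you actually intend to include a self-contained proof you would need to flesh out the ABP step carefully, in particular the covering argument that integrates over the tilt parameter $p$ and the verification that the rescaling in the dyadic iteration does not deteriorate the gradient threshold. But since the present paper treats this as an imported result, simply citing \cite{IS} (as the authors do) is the appropriate ``proof'' here.
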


With this result at hands, we can prove the following Harnack type inequality.

\begin{lem}\label{sup_inf}
Let $w>0$ solve (in the viscosity sense)
$$\Delta w = \frac{h(\nabla w)}{w}, \quad \text{in $B_1$}.$$ Then given $\sigma \geq 0$, if $w \geq \sigma$, 
\be \sup_{B_{1/2}}(w-\sigma) \leq C(1+ \inf_{B_{1/2}}(w-\sigma)),\ee with $C>0$ universal.
\end{lem}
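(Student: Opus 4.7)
The plan is to combine the Imbert--Silvestre $L^\xi$ estimate (Theorem \ref{SI}) with a pointwise bound derived from the subharmonicity of a suitable power of the solution.

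Set $u:=w-\sigma\ge 0$; then $\nabla u=\nabla w$ and $\Delta u=h(\nabla u)/(u+\sigma)$. Two properties of $u$ drive the argument. First, since $h\le 0$ outside $D\subset B_M$, we have $\Delta u\le 0$ on $\{|\nabla u|\ge M\}$. Second, using $h(p)\ge -C_h|p|^2$ together with $u/(u+\sigma)\le 1$, a direct computation yields
$$\Delta(u^\beta)=\beta u^{\beta-2}\Bigl(\tfrac{u\,h(\nabla u)}{u+\sigma}+(\beta-1)|\nabla u|^2\Bigr)\ge \beta(\beta-1-C_h)u^{\beta-2}|\nabla u|^2\ge 0$$
for $\beta=1+C_h$, so $u^\beta$ is subharmonic in $B_1$.

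Let $m:=\inf_{B_{1/2}}u$ and set $K:=\max\{M/\eta_0,\,m\}$. The rescaled function $\tilde u:=u/K$ satisfies $\tilde u\ge 0$, $\inf_{B_{1/2}}\tilde u\le 1$, and on $\{|\nabla\tilde u|\ge\eta_0\}$ one has $|\nabla u|\ge\eta_0 K\ge M$, hence $\Delta\tilde u\le 0\le 1$. Theorem \ref{SI} applied to $\tilde u$ yields $\|u\|_{L^\xi(B_{1/2})}\le CK\le C(1+m)$. Combined with Moser iteration for the nonnegative subharmonic function $u^\beta$, which gives $\sup_{B_r}u\le C(r,\rho)\|u\|_{L^\xi(B_\rho)}$ for every $0<r<\rho\le 1$, this produces $\sup_{B_{1/4}}u\le C(1+m)$.

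The last step upgrades the estimate from $B_{1/4}$ to $B_{1/2}$. The equation is invariant under the Lipschitz rescaling $\tilde w(x)=w(x_0+rx)/r$, so applying the two previous steps on every ball $B_r(x_0)\subset B_1$ produces the scaled bound
$$\sup_{B_{r/4}(x_0)}u\le C\bigl(r+\inf_{B_{r/2}(x_0)}u\bigr).$$
Covering $B_{1/2}$ by finitely many balls of radius $\sim 1/16$ and chaining this inequality through a short Harnack chain that connects each covering ball to a point realizing $\inf_{B_{1/2}}u$ then yields $\sup_{B_{1/2}}u\le C'(1+m)$. The main obstacle I anticipate is precisely this chaining/domain-matching step: Imbert--Silvestre only controls the $L^\xi$ norm one scale smaller than the equation's domain, so the first application of IS plus Moser only yields a sup bound on $B_{1/4}$, and iterating through the scale invariance of the equation is needed to close the estimate on $B_{1/2}$ without blowing up the dependence on $\inf_{B_{1/2}}u$.
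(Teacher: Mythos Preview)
Your approach is correct and essentially identical to the paper's: rescale $w-\sigma$ to apply the Imbert--Silvestre $L^\xi$ estimate, observe that $(w-\sigma)^\beta$ is subharmonic for $\beta\ge 1+C_h$, and then invoke the local maximum principle for subsolutions to pass from the $L^\xi$ bound to a pointwise bound. The paper in fact only derives $\sup_{B_{1/4}}(w-\sigma)\le C\bigl(1+\inf_{B_{1/2}}(w-\sigma)\bigr)$ and declares the result, so your explicit covering/chaining step to reach $B_{1/2}$ is extra care that the paper itself omits.
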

\begin{proof}
From assumption \eqref{hl}, the function 
$$u:=\frac{w-\sigma}{M/\eta_0(1+ \inf_{B_{1/2}}(w-\sigma))}$$ 
satisfies the hypothesis of Theorem \ref{SI} (say we choose $M \geq \eta_0$), and
\be\label{ws} \|w-\sigma\|_{L^\xi(B_{1/2})} \leq C(1+\inf_{B_{1/2}}(w-\sigma)).\ee On the other hand, using the equation together with assumption \eqref{hl}, we get (in the viscosity sense) in $B_1,$
$$\Delta (w-\sigma)^\gamma = \gamma \left(\frac{w-\sigma}{w} h(\nabla w) + (\gamma -1)|\nabla w|^2 \right) $$
$$\geq \gamma \left(-C_h |\nabla w|^2 + (\gamma-1)|\nabla w|^2 \right) \geq 0,$$
as long as $\gamma>0$ is large enough. By Weak Harnack inequality for $(w-\sigma)^\gamma$ (Theorem 9.26 in \cite{GT}), 
$$\sup_{B_{1/4}} \, w-\sigma \le C(\gamma,\xi) \|w-\sigma\|_{L^\xi},$$
which combined with \eqref{ws} gives the desired bound.

\end{proof}

We are now ready to prove a H\"older continuity result for  solutions to \eqref{general}, with universal estimates. We start with an oscillation decay lemma. In what follows, given a continuous function $w$ defined in a ball $B_r$ we denote, 
$$\omega(r):= \sup_{B_r} w - \inf_{B_r} w,$$
the oscillation of $w$ on $B_r.$

\begin{lem}\label{oscholder} Let $w >0$ solve (in the viscosity sense)
$$\Delta w = \frac{h(\nabla w)}{w}, \quad \text{in $B_r$}.$$ If $\omega (r) \geq K r$, for some $K$ large universal, then
$$\omega(r/2) \leq \gamma \, \, \omega(r), \quad 0<\gamma <1.$$
\end{lem}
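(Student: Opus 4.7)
The plan is to rescale to unit size and then run the standard oscillation-decay dichotomy using the one-sided Harnack inequality from Lemma \ref{sup_inf}. By the Lipschitz invariance $\tilde w(x) = w(rx)/r$ (which preserves the equation), I would reduce to the case $r=1$, where the hypothesis becomes $\omega(1) \ge K$ and the conclusion becomes $\omega(1/2) \le \gamma\,\omega(1)$. Set $m := \inf_{B_1} w$ and $M := \sup_{B_1} w$, so $\omega(1) = M-m$. Since $w - m \ge 0$ in $B_1$ and $w$ still solves the interior equation, Lemma \ref{sup_inf} applied with $\sigma = m$ yields
\be\label{planharnack}
\sup_{B_{1/2}}(w - m) \le C\bigl(1 + \inf_{B_{1/2}}(w - m)\bigr)
\ee
for a universal $C$.

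Now dichotomize on the size of $\inf_{B_{1/2}}(w-m)$ relative to $\omega(1)$. If $\inf_{B_{1/2}}(w-m) \ge \omega(1)/(4C)$, then the infimum of $w$ rises by at least $\omega(1)/(4C)$ going from $B_1$ to $B_{1/2}$, so
$$\omega(1/2) \le M - \bigl(m + \omega(1)/(4C)\bigr) = \bigl(1 - 1/(4C)\bigr)\omega(1).$$
If instead $\inf_{B_{1/2}}(w-m) < \omega(1)/(4C)$, substitute into \eqref{planharnack} and use $\omega(1) \ge K$ to absorb the additive $1$:
$$\omega(1/2) \le \sup_{B_{1/2}}(w-m) \le C + \omega(1)/4 \le \bigl(C/K + 1/4\bigr)\omega(1).$$
Choosing $K$ universal large enough that $C/K \le 1/4$ forces $\omega(1/2) \le \omega(1)/2$ in this case. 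Setting $\gamma := \max\{1/2,\, 1 - 1/(4C)\} < 1$ completes the argument in both alternatives.

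There is no real obstacle here; the only point requiring care is the bookkeeping for the additive constant in \eqref{planharnack}, which after undoing the rescaling becomes an $O(r)$ term and is precisely what the hypothesis $\omega(r) \ge Kr$ is designed to absorb. Note also that only the ``lower'' Harnack estimate from Lemma \ref{sup_inf} is used — one does not need a companion statement for $M-w$, since the dichotomy handles the case where the supremum fails to drop by observing that the infimum must then rise instead.
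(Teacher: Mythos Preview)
Your proposal is correct and follows essentially the same approach as the paper: rescale by the Lipschitz invariance, apply Lemma~\ref{sup_inf} with $\sigma=\inf_{B_1}w$, and run the dichotomy on whether $\inf_{B_{1/2}}(w-m)$ is large or small relative to $\omega(1)$, absorbing the additive constant via $\omega(1)\ge K$. The paper's write-up packages the rescaling and the subtraction of the infimum into a single function $\tilde w(x)=(w(rx)-\sigma_r)/r$ and uses the threshold $\omega(r)/(2Cr)$ in place of your $\omega(1)/(4C)$, but these are cosmetic differences only.
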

\begin{proof} Call
$$\tilde w(x) := \frac{w(rx)-\sigma_r}{r}, \quad x \in B_1, \quad \sigma_r:=\inf_{B_r} w,$$ then $\tilde w \geq 0$ and according to Proposition \ref{sup_inf},
\be\label{si}\sup_{B_{1/2}}\tilde w \leq C(1+ \inf_{B_{1/2}}\tilde w).\ee
Our desired claim follows if we show that for some $0<\gamma<1,$
\be\label{want}osc_{B_{1/2}} \tilde w \leq \gamma  \, \, osc_{B_1} \tilde w.\ee
Notice that $$osc_{B_1} \tilde w = \frac{\omega(r)}{r}, \quad \inf_{B_1} \tilde w=0.$$ 
If for all $x \in B_{1/2}$, we have $\tilde w (x) \geq  \frac{\omega(r)}{2Cr},$ the bound \eqref{want} trivially follows. Otherwise, 
 $\tilde w (x_0) < \frac{\omega(r)}{2Cr}$ for some $x_0 \in B_{1/2},$ and \eqref{si} yields
$$\sup_{B_{1/2}} \tilde w \leq C + \frac{\omega(r)}{2r},$$ which again implies the desired bound if we choose $K > 2C.$

\end{proof}

We can now deduce our H\"older continuity estimate.

\begin{prop}\label{holder}  Let $w >0$ solve (in the viscosity sense)
$$\Delta w = \frac{h(\nabla w)}{w}, \quad \text{in $B_1$}.$$ Then, $w \in C^{0,\alpha}(B_{1/2})$ and 
$$\|w\|_{C^{0,\alpha}(B_{1/2})} \leq C(1+w(0)),$$
with $C>0$ universal and $0<\alpha<1$ universal.
\end{prop}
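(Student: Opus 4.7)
The plan is to combine the Harnack-type inequality from Lemma \ref{sup_inf} with the oscillation decay estimate from Lemma \ref{oscholder} via a standard dyadic iteration, exploiting the fact that $\Delta w = h(\nabla w)/w$ is preserved under the Lipschitz rescaling $\tilde w(x) = w(rx+x_0)/r$ (as already observed in the introduction). Only two ingredients are needed beyond what is stated above: a uniform sup bound on $w$ in a slightly larger ball in terms of $w(0)$, and the iteration of the oscillation decay to arbitrarily small scales.

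For the first, I would apply Lemma \ref{sup_inf} with $\sigma = 0$ on a ball slightly larger than $B_{1/2}$ (say the translate and rescaling of $B_1$ giving $B_{3/4}$), together with a short chaining argument: the lemma yields $\sup_{B_{r/2}(x_0)} w \leq C(1+\inf_{B_{r/2}(x_0)} w)$ for every sub-ball $B_r(x_0)\subset B_1$, and covering $B_{3/4}$ by finitely many overlapping balls joining $0$ to each point gives $\sup_{B_{3/4}} w \leq C(1+w(0))$ with $C$ universal.

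For the second, fix $x_0 \in B_{1/2}$ and let $r_0=1/8$, so that $B_{r_0}(x_0)\subset B_{3/4}$. Set $\omega_k := \operatorname{osc}_{B_{2^{-k}r_0}(x_0)} w$. Applying Lemma \ref{oscholder} (after translating so that $x_0$ is the origin and rescaling by $2^{-k}r_0$) to each dyadic ball yields the dichotomy
$$
\omega_{k+1} \,\leq\, \max\!\bigl(\gamma\,\omega_k,\; K\cdot 2^{-k} r_0\bigr).
$$
Choosing $\alpha\in(0,1)$ with $2^{-\alpha}\geq \gamma$, an elementary induction gives $\omega_k \leq C_0\bigl(1+w(0)\bigr)\,2^{-k\alpha}$, where $C_0$ is picked large enough to dominate both the initial oscillation $\omega_0\leq C(1+w(0))$ from Step 1 and the additive term $Kr_0 2^\alpha$ from the dichotomy. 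Converting this dyadic decay into the pointwise bound $|w(x)-w(y)|\leq C(1+w(0))|x-y|^\alpha$ for $x,y\in B_{1/2}$ is standard.

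The main technical point to check is that Lemma \ref{oscholder} genuinely applies after rescaling — but this is automatic because the equation is scale-invariant in exactly this Lipschitz sense, so the hypothesis ``$w>0$ solves the equation in $B_r$'' passes to all dyadic sub-balls of $B_{3/4}$. The only other subtlety is balancing the additive $Kr$ term against the geometric decay in the iteration, which is handled by the choice of $C_0$ above and forces $\alpha<1$; this is not an obstacle but simply fixes the universal Hölder exponent.
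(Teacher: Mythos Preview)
Your proof is correct and takes essentially the same approach as the paper: a dyadic iteration of Lemma~\ref{oscholder}, with the initial oscillation bound supplied by Lemma~\ref{sup_inf} applied with $\sigma=0$. The paper's write-up is terser (it runs the induction at the origin only and notes that $\omega(1/2)\le C(1+w(0))$ fixes the constant), while you spell out the Harnack chain to control $\sup_{B_{3/4}}w$ and perform the iteration explicitly at each $x_0\in B_{1/2}$; these are cosmetic differences rather than a different argument.
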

\begin{proof}We wish to show that
\be\label{1}\omega(r) \leq \bar C r^\alpha, \quad r=r_k:= 2^{-k}, \quad \forall k \geq 0.\ee
We choose $\bar C \geq \max\{\omega(1), 2K\}$ with $K$ given by Lemma \ref{oscholder},
and argue by induction. 
 If, $$\omega (r) \geq K r,$$ then 
$$\omega (\frac r 2) \leq \gamma \omega(r) \leq \gamma \bar C r^\alpha \leq \bar C(\frac r 2)^\alpha,$$
so by choosing $\alpha=\alpha(\gamma)$ appropriately, our claim is satisfied.
If $\omega(r)< Kr$, the claim is obviously satisfied (by our choice of $\bar C$).

The final estimate follows as in view of Lemma \ref{sup_inf}, $$\omega(\frac 12) \leq C (1+ w(0)).$$

\end{proof}

Next we deduce that solutions to our free boundary problem grow at most linearly away from the free boundary.

\begin{prop}\label{lg}
Let $w$ be a viscosity solution to \eqref{general} in $B_1$, then 
$$w(x) \leq C d(x), \quad d(x):=dist(x, F(w)), \quad B_{d(x)}(x) \subset B_{3/4},$$ with $C>0$ universal.
\end{prop}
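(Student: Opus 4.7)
The plan is a blow-up/compactness argument culminating in a contradiction with Hopf's lemma. Argue by contradiction: suppose there exist viscosity solutions $w_k$ on $B_1$ and points $x_k$ with $B_{d_k}(x_k)\subset B_{3/4}$ such that $A_k:=w_k(x_k)/d_k\to\infty$. Exploiting the Lipschitz scale invariance, set $\tilde w_k(x):=w_k(x_k+d_kx)/d_k$ on $B_1$; each $\tilde w_k$ is a viscosity solution with $\tilde w_k>0$ on $B_1$, $\tilde w_k(y_k')=0$ at some $y_k'\in\partial B_1$ (the image of the nearest free boundary point of $w_k$), and $\tilde w_k(0)=A_k$. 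Normalize $u_k:=\tilde w_k/A_k$, so that $u_k(0)=1$, $u_k\geq 0$, $u_k(y_k')=0$, and $u_k$ satisfies the rescaled PDE $\Delta u_k=h(A_k\nabla u_k)/(A_k^2u_k)$.

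By Proposition \ref{holder} applied, in its rescaled form, to $\tilde w_k$ on each ball of positivity $B_\rho(z)\subset B_1$, after dividing by $A_k$ the functions $u_k$ are uniformly bounded in $C^{0,\alpha}$ on every compact subset of $B_1$. Extract a locally uniformly convergent subsequence and let $u_*$ denote its lower half-relaxed limit on $\bar B_1$: the choice $(y_k',k)$ in the liminf gives $u_*(y_\infty')=0$ where $y_\infty':=\lim y_k'\in\partial B_1$, while $u_*(0)=1$ and $u_*\geq 0$ everywhere. The bounds $-C_h|p|^2\leq h(p)\leq C$ together with $A_k\to\infty$ force the right-hand side $h(A_k\nabla u_k)/(A_k^2u_k)$ to be eventually $\leq o(1)$ on compact subsets where the limit is bounded below, and stability of viscosity supersolutions under half-relaxed limits then yields that $u_*$ is a viscosity supersolution of $\Delta u\leq 0$ in $B_1$. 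The strong minimum principle, together with $u_*(0)=1$, forces $u_*>0$ strictly inside $B_1$.

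The supersolution free boundary condition for $\tilde w_k$ at $y_k'$ states that any $C^2$ test $\psi^+$ with $|\nabla\psi(y_k')|\neq 0$ touching $\tilde w_k$ from below at $y_k'$ satisfies $|\nabla\psi(y_k')|\leq f(\nu_k)\leq 1/\delta$; equivalently, for any $\phi^+$ touching $u_k$ from below at $y_k'$, $|\nabla\phi(y_k')|\leq f(\nu_k)/A_k\to 0$. Stability of this viscosity free boundary condition under half-relaxed limits therefore implies that every $C^2$ test $\phi^+$ touching $u_*$ from below at $y_\infty'$ must satisfy $|\nabla\phi(y_\infty')|=0$. On the other hand, $u_*$ is a nonnegative lsc viscosity supersolution of $\Delta u\leq 0$, strictly positive in $B_1$, and vanishes at the smooth boundary point $y_\infty'\in\partial B_1$; comparison with a standard radial exponential barrier tangent to $\partial B_1$ at $y_\infty'$ (the viscosity Hopf lemma) yields $\liminf_{t\to 0^+}u_*(y_\infty'-t\nu)/t\geq c>0$, where $\nu$ is the outward normal. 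Taking $\phi(x):=(c/2)(y_\infty'-x)\cdot\nu$ then gives a valid $C^2$ test touching $u_*$ from below at $y_\infty'$ with $|\nabla\phi(y_\infty')|=c/2>0$, contradicting the previous sentence.

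The main obstacle is making the two passages to the limit under half-relaxed limits rigorous --- for the interior equation (to conclude that $u_*$ is superharmonic, where one must handle the potential degeneracy of $h(A_k\nabla u_k)/(A_k^2 u_k)$ on the locus $\{u_*=0\}$, removed here by the strong minimum principle) and for the free boundary condition (to obtain the limiting supersolution condition with zero gradient) --- together with the viscosity form of Hopf's lemma for the lsc supersolution $u_*$; these are all standard tools for this class of problems but need careful setup.
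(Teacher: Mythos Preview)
Your blow-up/compactness route is conceptually sound and genuinely different from the paper's argument, but it contains a real gap in the last step, and once that gap is repaired you will find you have essentially reproduced the paper's barrier argument in a more roundabout way.

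\textbf{The gap.} Your stability claim---``every $C^2$ test $\phi^+$ touching $u_*$ from below at $y_\infty'$ must satisfy $|\nabla\phi(y_\infty')|=0$''---is too strong. Run the half-relaxed limit carefully: if $\phi+c_k$ touches $u_k$ from below at $z_k\to y_\infty'$, there are three cases. If $z_k\in F(u_k)$ the free boundary condition gives $A_k|\nabla\phi(z_k)|\le \delta^{-1}$, a contradiction. If $z_k$ lies in the interior of $\{u_k=0\}$ then $\nabla\phi(z_k)=0$, again a contradiction. But if $z_k$ lies in the \emph{positive} phase, all you get is $\Delta\phi(z_k)\le h(A_k\nabla\phi(z_k))/(A_k^2 u_k(z_k))\le 0$, since $A_k\nabla\phi(z_k)\notin\bar D$ forces $h\le 0$. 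For your linear choice $\phi(x)=(c/2)(y_\infty'-x)\cdot\nu$ this gives $0\le 0$: no contradiction. (There is also a secondary issue: the one-dimensional Hopf bound $\liminf u_*(y_\infty'-t\nu)/t\ge c$ does not by itself guarantee that the \emph{linear} $\phi$ touches $u_*$ from below in a full neighborhood.)

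\textbf{The fix, and the comparison.} Both problems disappear if you use the radial Hopf barrier itself as the test function: it touches $u_*$ from below in the annulus, has nonzero gradient at $y_\infty'$, and is \emph{strictly subharmonic}, so the interior case is ruled out as well. But notice what you have then done: constructed a strictly subharmonic radial barrier below the rescaled solution in an annulus, touching at a free boundary point with large gradient. That is exactly the paper's proof, carried out directly on a single solution with no compactness: after rescaling so that $B_1$ is the largest ball of positivity tangent to $F(w)$ at $x_0$, Lemma~\ref{sup_inf} gives $w\ge c\,w(0)$ on $\partial B_{1/2}$, and the explicit barrier $\psi(x)=M(|x|^{-n}-1)$ (with $M$ large so $\nabla\psi\notin\bar D$, hence $\Delta\psi>0\ge h(\nabla\psi)/\psi$) slides under $w$ in $B_1\setminus\bar B_{1/2}$ and touches at $x_0$, contradicting the supersolution free boundary condition. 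Your compactness machinery, half-relaxed limits, and viscosity Hopf lemma are all avoided.
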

\begin{proof}
By a Lipschitz rescaling we can assume that $0 \in B_{4/3}^+(w)$, and that $B_1$ is the largest ball around $0$ contained in $B_{4/3}^+(w),$ tangent to $F(w)$ say at $x_0$. Thus, we need to show that $w(0)$ is bounded above by a universal constant. We claim that if $w(0) \gg C$, with $C$ the constant in Lemma \ref{sup_inf} then it follows from that lemma (applied with $\sigma=0$) that $$w \geq c \, \, w(0) \quad \text{on $\p B_{1/2}$},$$
for some $c$ universal.
Now set,
$$\psi(x):= M(|x|^{-n} -1) \quad \text{in $|x| \geq 1/2$},$$ and we have 
$$\Delta \psi >0, \quad |\nabla \psi| \geq M,\quad \text{in $A:= B_1 \setminus \bar B_{1/2}$},$$ 
hence (see \eqref{hl}),
$$\Delta \psi > 0 \ge \frac{h(\nabla \psi)}{\psi} \quad \text{in $A$}.$$
If we assume by contradiction that $w(0)$ is sufficiently large, then $\psi-t$ with $t>0$ cannot touch $w$ by below in $A$ or on $\p B_{1/2}$, and it follows that $$\psi \leq w \quad \text{on $A$.}$$ Thus $\psi^+$ touches $w$ by below at $x_0 \in F(w) \cap F(\psi^+)$ and $|\nabla \psi| \in \R^n \setminus \bar D.$ This contradicts the free boundary condition for $w.$

\end{proof}

Finally, we show that solutions are locally Lipschitz with universal bound, and $C^{1,\alpha}$ in their positive phase.

\begin{thm}\label{lipth} Let $w$ be a viscosity solution to \eqref{general} in $B_1$ with $h$ satisfying \eqref{hl}. Assume that $F(w) \cap B_{1/4} \neq \emptyset.$ Then $w$ is locally Lipschitz in $B_{1/2}$ with universal Lipschitz norm. Moreover, $w \in C^{1,\alpha}$  in $B_{1}^+(w),$ for some $0<\alpha<1$. 
\end{thm}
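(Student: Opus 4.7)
The theorem has two parts. The $C^{1,\alpha}$ regularity on $B_1^+(w)$ is a consequence of the Lipschitz bound together with classical interior elliptic theory: once $w$ is locally Lipschitz, on any $\Omega' \Subset B_1^+(w)$ one has $w \geq c > 0$ and $|\nabla w| \leq C$, so the right-hand side $h(\nabla w)/w$ is bounded; Calder\'on-Zygmund then gives $w \in W^{2,p}(\Omega')$ for every $p$, and a Schauder bootstrap exploiting $h \in C^1$ upgrades this to $w \in C^{2,\alpha}(\Omega')$ for some universal $\alpha \in (0,1)$. In particular $w \in C^{1,\alpha}$.

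The main work is the Lipschitz bound, which I plan to prove by combining the Lipschitz scale invariance of \eqref{general} with a Bernstein-type maximum-principle argument. Given $x \in B_{1/2}^+(w)$, set $d = d(x, F(w))$ and rescale
\[
\tilde w(y) \;=\; \frac{w\bigl(x + \tfrac{d}{2}\, y\bigr)}{d/2}, \qquad y \in B_1.
\]
Then $\tilde w$ is a viscosity solution of \eqref{general} on $B_1$, strictly positive there (by the definition of $d$), with $\sup_{B_1} \tilde w$ universally bounded by Proposition \ref{lg} applied at $0$ and nearby points. Proposition \ref{holder} provides a universal interior H\"older bound, and elliptic bootstrap shows $\tilde w$ is a classical $C^{2,\alpha}$ solution on any subdomain in which it is bounded below. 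Since $\nabla w(x) = \nabla \tilde w(0)$, it is enough to bound $|\nabla \tilde w(0)|$ universally.

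For the interior gradient estimate my plan is to apply Bernstein's method to $\phi := |\nabla \tilde w|^2$. Differentiating the equation $\Delta \tilde w = h(\nabla \tilde w)/\tilde w$ and contracting with $\nabla \tilde w$ yields the identity
\[
\Delta \phi \;-\; \frac{1}{\tilde w}\,\nabla_p h(\nabla \tilde w) \cdot \nabla \phi \;=\; 2|\nabla^2 \tilde w|^2 \;-\; \frac{2 h(\nabla \tilde w)}{\tilde w^2}\,\phi.
\]
On the set $U := \{\nabla \tilde w \in D^c\}$ the sign hypothesis \eqref{sign_h} makes the right-hand side nonnegative, so $\phi$ is a subsolution of a linear elliptic operator there. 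A standard cutoff maximum-principle argument then confines the supremum of $\phi$ on $U \cap B_{1/2}$ to its supremum on the transition set $\{\nabla \tilde w \in \Gamma\}$, where \eqref{fnu} gives $|\nabla \tilde w| \leq \delta^{-1}$, plus a term controlled by $\sup_{B_1}\tilde w$ coming from the cutoff. On the complementary region $\{\nabla \tilde w \in \bar D\}$ one has the trivial bound $|\nabla \tilde w| \leq \operatorname{diam} D$. Combining these, $|\nabla \tilde w(0)| \leq C$ universally, as required.

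The main obstacle is making the Bernstein argument rigorous: the coefficient $\nabla_p h(\nabla \tilde w)/\tilde w$ of the linear operator may be unbounded where $\tilde w$ is small, and the region $U$ has boundary that is not a priori smooth. I would handle these difficulties by working first in the subdomain $\{\tilde w > \epsilon\}$ (on which the operator has bounded coefficients and $\tilde w$ is classical) and passing to $\epsilon \to 0$, using the sign structure of the Bernstein computation together with the a priori H\"older bound on $\tilde w$; the key point enabling the limit is that $\phi$ is automatically controlled on the transition set $\{\nabla \tilde w \in \Gamma\}$, where $h$ vanishes and so the singular term $h/\tilde w^2$ does not cause trouble.
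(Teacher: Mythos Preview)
Your overall architecture (rescale so that the problem becomes an interior one, then prove an interior gradient bound) matches the paper's, but the interior step you propose---a Bernstein maximum-principle computation for $\phi=|\nabla \tilde w|^2$---is genuinely different from what the paper does, and as written it has a gap.

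The difficulty is the drift in your linearized operator. Your identity
\[
\Delta \phi - \frac{1}{\tilde w}\,\nabla_p h(\nabla \tilde w)\cdot \nabla \phi
= 2|\nabla^2 \tilde w|^2 - \frac{2h(\nabla \tilde w)}{\tilde w^2}\,\phi
\]
is correct, and the right-hand side is indeed nonnegative on $U=\{\nabla \tilde w\in D^c\}$. But the coefficient $b=\nabla_p h(\nabla \tilde w)/\tilde w$ is not under control: the theorem is stated (and proved in the paper) under hypothesis \eqref{hl} alone, which does not even require $h\in C^1$, and in the global set-up the universal constants involve the $C^1$ norm of $h$ only in a neighborhood of $\Gamma$. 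At a putative interior maximum of a cut-off of $\phi$ inside $U$, the gradient $\nabla \tilde w$ may be arbitrarily far from $\Gamma$, and there $|\nabla_p h|$ is not universally bounded. This is a circularity: to run the cutoff maximum principle with universal constants you need a bound on $|b|$, which in turn needs the very gradient bound you are trying to prove. Your last paragraph tries to localize to $\{\tilde w>\eps\}$, but the boundary pieces this creates are level sets of $\tilde w$, not the transition set $\{\nabla\tilde w\in\Gamma\}$; you have no control of $\phi$ there, so the limit $\eps\to0$ is not justified. A secondary issue is your rescaling: dividing by $d/2$ gives $\tilde w>0$ on $B_1$ but no lower bound on $\tilde w(0)$, so $1/\tilde w$ can still blow up near the origin; rescaling by $\lambda=w(x_0)$ (so that $\tilde w(0)=1$) fixes this.

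The paper avoids all of this by never differentiating the equation. It rescales by $\lambda=w(x_0)$, uses Propositions~\ref{lg} and~\ref{holder} to place $\tilde w$ in $[1/2,2]$ on a small universal ball $B_\rho$, and then performs one more affine rescaling so that the resulting function $\bar w$ satisfies $|\bar w|\le 1$ and, thanks only to \eqref{hl},
\[
|\Delta \bar w|\le \eta \quad\text{whenever}\quad |\nabla \bar w|\le \tfrac1\eta.
\]
The interior $C^{1,\alpha}$ estimate then comes from Lemma~\ref{delta1}, a small-perturbation compactness argument comparing $\bar w$ to harmonic functions. This route uses no information on $\nabla_p h$ and yields the Lipschitz bound and the $C^{1,\alpha}$ regularity simultaneously, under the minimal hypothesis \eqref{hl}.
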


\begin{rem}
Since $w \in C^{1,\alpha}_{loc}$ in the set $\{w>0\}$, we can then apply Schauder estimates and conclude that if $h \in C^\alpha_{loc}$ then $w \in C_{loc}^{2,\alpha}$ is a classical solution in $B^+(w)$.
\end{rem}

\begin{proof} We prove the estimates near a point $x_0 \in B_{1/2}^+(w)$. In view of Proposition \ref{lg}, the Lipschitz rescaling 
$$\tilde w(x):=\lambda^{-1}w(x_0 + \lambda x), \quad \quad \lambda:=w(x_0)$$ satisfies
$$\tilde w (0) = 1, \quad \tilde w > 0 \quad \text{in $B_{r}$}, \quad \text{ for some $r$ universal.}$$ Thus, by Proposition \ref{holder}, we find $\tilde w \in C^{0,\alpha}$ with 
$$\|\tilde w\|_{C^{0,\alpha}(B_{r/2})} \leq C$$and in particular 
$$\frac 1 2 \leq \tilde w \leq 2, \quad \text{in $B_{\rho}$, for all $\rho$ sufficiently small.}$$
Now set
$$\bar w(x)= \frac{\tilde w(\rho x)- 1}{C \rho^\alpha}, \quad x \in B_1,$$
and then
$$|\Delta \bar w| \leq 2 C^{-1}\rho^{2-\alpha} |h (C \rho^{\alpha-1}\nabla \bar w(x))|,\quad  \quad |\bar w | \leq 1 \quad \text{in $B_1$.}$$
Thus, in view of \eqref{growth2}, we can choose $\rho$ small universal, such that 
$$|\Delta \bar w| \leq \eta, \quad \text{when $|\nabla \bar w| \leq \frac 1 \eta.$}$$
with $\eta(n)>0$ the universal constant in Lemma \ref{delta1} below.
Hence, $\bar w$ is $C^{1,\alpha}$ and the desired conclusion easily follows.

\end{proof}

The following lemma says that if $u$ is almost harmonic except possibly in the region where the gradients are large, then it is of class $C^{1,\alpha}$. Its proof follows from the 
perturbations arguments developed in \cite{S}. Here we only sketch the main ideas. 

\begin{lem}\label{delta1} There exists $\eta>0$ universal (i.e. depending only on $n$), such that if $u$ solves in the viscosity sense in $B_1$,
\be\label{delta1}|\Delta u| \leq \eta, \quad \text{when $|\nabla u| \leq \frac 1 \eta,$}\ee
and $\|u\|_{L^
\infty}\leq 1$, then $u \in C^{1,\alpha}(B_{1/2}),$ for some $0<\alpha<1$ universal.
\end{lem}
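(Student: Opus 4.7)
The plan is to adapt Savin's small perturbations method \cite{S}: show that at every point $x_0 \in B_{1/2}$, $u$ admits a pointwise affine approximation $\ell$ with $|u(x) - \ell(x)| \leq C|x - x_0|^{1+\alpha}$, obtained via a dyadic iteration in which at each scale $u$ is approximated by a harmonic function and then renormalized by subtracting its linear part.

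The cornerstone is the following approximation claim: for every $\sigma > 0$ there exists $\eta_0(\sigma, n) > 0$ such that if $\eta \le \eta_0$ and $u$ satisfies the hypothesis of the lemma, then $\|u - v\|_{L^\infty(B_{3/4})} \le \sigma$ for some harmonic $v$. I would prove this by compactness and contradiction. A bad sequence $u_k$ with $\eta_k \to 0$ is uniformly bounded and locally equicontinuous, thanks to a Krylov-Safonov type $C^\alpha$ estimate for viscosity inequalities that only hold where $|\nabla u_k| \le 1/\eta_k$ (this is available via the Imbert-Silvestre $L^\eps$ machinery used already in Section~3). Passing to a subsequence, $u_k \to u_\infty$ uniformly on $B_{3/4}$. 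To check that $u_\infty$ is harmonic, let $\phi$ be a smooth test function touching $u_\infty$ strictly from above at $x_0$; the perturbation $\phi + \eps|x - x_0|^2$ is touched from above by $u_k$ at some $x_k \to x_0$. Since $|\nabla \phi|$ is bounded on a neighborhood, the restriction $|\nabla \phi(x_k)| \le 1/\eta_k$ holds for $k$ large, so the inequality gives $\Delta \phi(x_k) + 2n\eps \ge -\eta_k$; letting $k \to \infty$ and then $\eps \to 0$ yields $\Delta \phi(x_0) \ge 0$. The symmetric argument shows $u_\infty$ is also a subsolution, hence harmonic.

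Given this, standard interior estimates produce a linear function $\ell(x) = v(0) + \nabla v(0) \cdot x$ with $|\nabla \ell| \le C_0$ universal and $\|v - \ell\|_{L^\infty(B_r)} \le C_0 r^2$, so $\|u - \ell\|_{L^\infty(B_r)} \le C_0 r^2 + \sigma$. Pick first $r$ small universal so that $C_0 r \le \tfrac{1}{2} r^\alpha$ for a chosen $\alpha \in (0,1)$, then $\sigma$ small with $\sigma \le \tfrac{1}{2} r^{1+\alpha}$, and set $\eta_0 = \eta_0(\sigma)$ from the approximation lemma. The rescaling $\tilde u(x) := r^{-(1+\alpha)}(u(rx) - \ell(rx))$ then satisfies $\|\tilde u\|_{L^\infty(B_1)} \le 1$, and the bound
\[
|\Delta \tilde u(x)| = r^{1-\alpha} |\Delta u(rx)| \le r^{1-\alpha}\eta \le \eta
\]
holds whenever $|\nabla \tilde u(x)| \le 1/\eta$, because such a bound on the rescaled gradient forces $|\nabla u(rx)| \le |\nabla \ell| + r^\alpha/\eta \le C_0 + r^\alpha/\eta \le 1/\eta$ provided $r$ is chosen small relative to $\eta$. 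Iterating produces a sequence of affine functions $\ell_k$ whose slopes form a Cauchy sequence with $|\nabla \ell_{k+1} - \nabla \ell_k| \le C r^{k\alpha}$, and the limiting affine function gives the desired pointwise $C^{1,\alpha}$ expansion of $u$ at $0$.

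The main technical obstacle is ensuring the hypothesis is preserved along the iteration with a uniform constant $\eta$. This amounts to showing that the slopes $|\nabla \ell_k|$ remain bounded by a universal constant $C_0$ at every step, so that the gradient threshold $C_0 + r^\alpha/\eta \le 1/\eta$ continues to hold. This is where the geometric decay $|\nabla \ell_{k+1} - \nabla \ell_k| \le C r^{k\alpha}$ is used: summing a geometric series gives $\sup_k |\nabla \ell_k| \le C_0$ provided the initial slope is bounded, which follows from the starting estimate. Finally, translating the argument to an arbitrary base point $x_0 \in B_{1/2}$ yields $u \in C^{1,\alpha}(B_{1/2})$.
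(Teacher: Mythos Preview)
Your overall strategy matches the paper's: both run Savin's small-perturbation scheme \cite{S}, establishing a one-step improvement $|u-\ell|\le r^{1+\alpha}$ on $B_r$ by compactness and contradiction, then iterating after subtracting $\ell$ and rescaling. Your stability argument for the harmonic limit and the iteration bookkeeping are essentially correct (though the order of dependencies should be reversed: fix $r,\alpha,\sigma$ first and then take $\eta$ small, so that the accumulated slope bound $|\nabla\ell_k|\le C_0/(1-r^\alpha)$ sits below $(1-r^\alpha)/\eta$).

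The genuine gap is in the equicontinuity step. You invoke ``the Imbert--Silvestre $L^\eps$ machinery used already in Section~3'', but that result (Theorem~\ref{SI} in the paper) gives estimates for equations that hold where $|\nabla u|$ is \emph{large}; in Section~3 it applies precisely because $h\le0$ outside $B_M$ forces $\Delta w\le0$ on $\{|\nabla w|\ge M\}$. The hypothesis of the present lemma is the opposite regime: $|\Delta u_k|\le\eta_k$ only where $|\nabla u_k|\le1/\eta_k$, with no control whatsoever where the gradient is large. Imbert--Silvestre says nothing in this situation, and there is no off-the-shelf Krylov--Safonov estimate either. The paper supplies the missing ingredient in its Step~1 via an ABP-type measure estimate (the quasi-Harnack of \cite{DS}): since $u$ cannot be touched from below by paraboloids $aP$ with bounded linear part---the gradient restriction is automatically met at such a contact point---one obtains $|\{u<C_0\}\cap B_{1/2}|\ge\tfrac34|B_{1/2}|$, hence $\mathrm{osc}_{B_{1/2}}u\le2-c$. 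Iterating this decay across the dyadic scales for which $(1-c)^k\eta_k^{-1}$ stays below the threshold produces the uniform H\"older modulus needed for compactness. This ABP step, not Imbert--Silvestre, is the substantive content you are missing.
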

\begin{proof} It suffices to show that there exists a linear function $l$ with $|\nabla l|\leq C$ universal, such that for $r>0$ small universal, and $0<\alpha<1$ universal,
\be\label{l} |u-l| \leq r^{1+\alpha} \quad \quad \text{in $B_r$}.\ee

Then it is enough to observe that 
$$\tilde u(x) := \frac{(u-l)(rx)}{r^{1+\alpha}}, \quad x \in B_1,$$ satisfies the assumptions of the lemma hence estimate \eqref{l} can be iterated indefinitely, leading to the $C^{1,\alpha}$ estimate.

Let $r>0$ be fixed, to be made precise later. Assume by contradiction that there exists a sequence $\eta_j \to 0$ as $j \to \infty$ and a sequence $u_j$ of solutions to \eqref{delta1}, with $|u_j| \leq 1$, which do not satisfy the conclusion \eqref{l}.

\medskip

We divide the proof in two steps.

\medskip

\textit{Step 1. Improvement of oscillation.} 

\smallskip

{\it Claim 1}: There exist universal constants $C_0, C_1>0$ such that if  $u \ge 0$ and 
$$u(x_0) \leq 1, \quad x_0 \in B_{1/2},$$ and 
$$|\Delta u| \leq 1, \quad \text{when $|\nabla u| \leq C_1$}$$ then 
$$|\{u< C_0\} \cap B_{1/2}| \geq \frac 3 4 |B_{1/2}|.$$

This follows from a version of the Alexandrov-Bakelman-Pucci estimate, see \cite{S}. A precise reference is Theorem 2.4 in \cite{DS}, by noticing that $u$ is a ``supersolution" in the sense of Definition 2.1 of \cite{DS}, with $\Lambda=4n$, $r$ arbitrarily small, and $I=[1,+\infty)$: 
 $u$ cannot be touched by below in a $B_r$ neighborhood by a polynomial of the form $aP$ with $a \in I$, and 
$$P:= \frac{\Lambda}{2}(x\cdot \xi)^2 - \frac 1 2 |x|^2 + L(x)$$
with $\xi$ a unit direction and $L(x):=b \cdot x + d, |b|, |d| \leq 1.$ 

\medskip

After dividing $u$ by $4C_0$ we can restate the claim as follows.

\smallskip

{\it Claim 2}: There exist universal constants $c>0$ (small) and $C_2>0$, such that if 
$$u(x_0) \leq c, \quad x_0 \in B_{1/2},$$ and 
$$|\Delta u| \leq c, \quad \text{when $|\nabla u| \leq C_2$}$$ then 
$$|\{u< \frac 1 4\} \cap B_{1/2}| \geq \frac 3 4 |B_{1/2}|.$$

Thus if $|u| \leq 1$, and if there exist $x_0, x_1 \in B_{1/2}$ such that $$(1-u)(x_0) \leq c, \quad (1+u)(x_0) \leq c$$ we would reach a contradiction as in view of Claim 2, 
$$|\{u>\frac 3 4\} \cap B_{1/2}|, |\{u<-\frac 3 4\} \cap B_{1/2}| \geq \frac 3 4 |B_{1/2}|.$$
In conclusion either $$u < 1-c \quad \text{or} \quad u>-1+c \quad \text{in $B_{1/2}$},$$
that is $$osc_{B_{1/2}} u \leq 2-c.$$

\

{\it Step 2. Compactness.} 
%Let $c>0$ be the universal constant from Claim 2. 
Consider the rescalings
$$\tilde u_{j,k}(x) := \frac{1}{(2-c)^k} u_j(2^{-k}x), \quad k\geq 0, \quad x\in B_1,$$
and apply Claim 2 inductively on $k$. We have $$|\Delta \tilde u_{j,k}|(x) = \frac{2^{-2k}}{(2-c)^k}|\Delta u_j|(2^{-k}x) \leq \eta_j \leq c$$
and $ |\nabla \tilde u_{j,k}|(x) \leq C_2 $ as long as $k$ satisfies 
$$C_2 \le 2^{-k}(2-c)^{k}\frac{1}{\eta_j}.$$ Thus, by Ascoli-Arzela and Claim 2, we conclude that $u_j$ converges (up to a subsequence) uniformly on compacts to a H\"older continuous function $u_\infty$ which satisfies
$$\Delta u_\infty = 0 \quad \text{in $B_{1/2}$.}$$
By elliptic regularity, 
$$|u_\infty - l| \leq C r^2 \quad \text{in $B_r$,} \quad \text{$l$ linear and $|\nabla l| \leq C$ universal.}$$
From the uniform convergence, 
$$|u_j -l| \leq 2C r^2 \le r^{1+\alpha}  \quad \text{in $B_r$},$$
provided that $r, \alpha$ are chosen appropriately, and we reached a contradiction.
\end{proof}

\section{Measure theoretic properties of the free boundary}

In this section we show that if $D$ is convex and $C^2$ smooth, and $h \in C^1$ then the free boundary of a non-degenerate viscosity solution to \eqref{general} has finite Hausdorff 
measure. We follow a strategy inspired by the work of Alt and Phillips in \cite{AP}. The universal constants in this section depend on the $C^2$ norm of $D$ and the $C^1$ norm of $h$ 
in a neighborhood of $\p D$.  

We say that a solution $w$ to \eqref{general} is non-degenerate if there exists a constant $\kappa>0$ such that for any $x_0 \in F(w)$ and $r$ such that $B_r(x_0) \subset B_1$ we have
\be\label{nondeg}
 x_0 \in F(w) \quad \Rightarrow \quad \max_{\p B_r(x_0)} w \geq \kappa r.
\ee

\begin{thm}\label{T7}
Assume that $D$ is a bounded convex set with $C^2$ boundary, and let $w$ be a viscosity solution to \eqref{general} satisfying the non-degeneracy condition \eqref{nondeg}.
Then
$$ \mathcal H^{n-1}(\p \{w>0\} \cap B_{1/2}) \le C(\kappa),$$
for some $C(\kappa)>0$ depending on the universal constants and $\kappa$.
\end{thm}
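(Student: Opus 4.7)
The plan, following the Alt--Phillips philosophy, is to estimate $\mathcal{H}^{n-1}(F(w) \cap B_{1/2})$ by showing that the distributional Laplacian $\Delta w$ is a nonnegative Radon measure on $B_1$ of bounded total mass, and that it carries positive $(n-1)$-dimensional upper density at every free boundary point.

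For the nonnegativity and mass bound, I would exploit the convexity of $D$: writing $\bar D = \bigcap_{\nu \in \mathbb S^{n-1}} \{p : p \cdot \nu \leq f(\nu)\}$, each linear function $L_{\nu,c}(x) := f(\nu)\, x \cdot \nu + c$ is an exact classical solution of \eqref{general} (both the interior equation and the free-boundary condition), since $\nabla L_{\nu,c} = f(\nu)\nu \in \Gamma$ forces $h(\nabla L_{\nu,c}) = 0$. A sliding/viscosity-comparison argument then yields $\partial_\nu w \leq f(\nu)$ throughout $B_1^+(w)$ for every $\nu \in \mathbb S^{n-1}$, so $\nabla w \in \bar D$ in $B_1^+(w)$ and by \eqref{sign_h}, $\Delta w = h(\nabla w)/w \geq 0$ classically. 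Since $(w-\epsilon)^+$ is subharmonic on $B_1$ and increases monotonically to $w$ as $\epsilon \searrow 0$, the distributional Laplacian $\Delta w$ is a nonnegative Radon measure on $B_1$. Choosing a cutoff $\eta \in C_c^\infty(B_{3/4})$ with $\eta \equiv 1$ on $B_{1/2}$ and integrating by parts, $\Delta w(B_{1/2}) \leq \int w \, \Delta\eta \, dx \leq C \|w\|_{L^\infty(B_1)} \leq C$ using Theorem \ref{LL}.

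For the density lower bound, the argument uses only non-degeneracy, subharmonicity, and Lipschitz regularity. Let $x_0 \in F(w) \cap B_{1/2}$ and $r>0$ small. The non-degeneracy condition \eqref{nondeg} gives a point $y \in \bar B_r(x_0)$ with $w(y) \geq \kappa r$, and Lipschitz continuity produces a ball of radius comparable to $r$ on which $w \geq \kappa r/2$, so $\int_{B_r(x_0)} w \, dy \geq c \kappa r^{n+1}$. Since $s \mapsto s^{1-n} \int_{\partial B_s(x_0)} w \, d\mathcal H^{n-1}$ is non-decreasing by subharmonicity, this yields the spherical-average bound
\[
\bar w(r) := |\partial B_r|^{-1} \int_{\partial B_r(x_0)} w \, d\mathcal H^{n-1} \geq c\kappa r.
\]
Applying the Riesz representation for subharmonic functions at $x_0$, where $w(x_0) = 0$,
\[
\bar w(r) = \frac{1}{n\omega_n} \int_0^r \frac{\Delta w(B_s(x_0))}{s^{n-1}} \, ds,
\]
and an elementary averaging then produces, for each $r>0$, some scale $s^* \in (0,r)$ with $\Delta w(B_{s^*}(x_0)) \geq c'(s^*)^{n-1}$. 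Hence $\limsup_{s\to 0^+} \Delta w(B_s(x_0))/s^{n-1} \geq c'$ at every $x_0 \in F(w) \cap B_{1/2}$, and the standard Hausdorff density theorem converts this together with $\Delta w(B_{1/2}) \leq C$ into $\mathcal H^{n-1}(F(w) \cap B_{1/2}) \leq C/c' = C(\kappa)$.

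The main obstacle is the first step: establishing $\nabla w \in \bar D$ throughout $B_1^+(w)$. The sliding argument with linear profiles $L_{\nu,c}$ is delicate because these functions saturate the viscosity free-boundary condition---their gradients lie exactly on $\Gamma$---so touching at a free boundary point cannot by itself yield a strict contradiction. One approach is to combine sliding with a small perturbation of $L_{\nu,c}$ that makes it a strict super-solution up to the free boundary, exploiting the $C^2$ regularity of $\Gamma$ and the strict star-shapedness $f \geq \delta$ to get rigidity. Convexity of $D$ is essential here: it ensures $\bar D$ is an intersection of half-spaces, so that pointwise membership $\nabla w \in \bar D$ reduces to the scalar inequalities $\partial_\nu w \leq f(\nu)$ for all $\nu \in \mathbb S^{n-1}$.
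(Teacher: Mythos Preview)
Your overall strategy---show $\Delta w\ge 0$ distributionally, bound its total mass, then extract an $(n-1)$-dimensional lower density at free boundary points---is reasonable and would work \emph{if} the first step held. But the inclusion $\nabla w\in\bar D$ throughout $B_1^+(w)$ is not true for solutions on a bounded domain, and the sliding argument you sketch cannot deliver it. There are two obstructions, only the first of which you flag. First, as you note, the linear profiles $L_{\nu,c}$ are exact solutions (their gradient sits on $\Gamma$, so both the interior equation and the free-boundary condition are saturated), and any perturbation that makes them strict supersolutions will also push the gradient strictly inside $D$, so you only conclude $\partial_\nu w\le f(\nu)+o(1)$, not $\partial_\nu w\le f(\nu)$. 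Second, and more decisively, on a bounded domain the supremum of $\partial_\nu w$ can be approached only along $\partial B_1$, where you have no control; no sliding or maximum-principle argument can rule out $\nabla w\notin\bar D$ near the fixed boundary, and indeed the boundary data can force large gradients there.

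The paper handles this by a different, more indirect route. It first proves $\nabla w\in\bar D$ only for \emph{global} Lipschitz solutions (Lemma~\ref{l71}), via a blow-up at a sequence where $\partial_\nu w$ approaches its supremum, combined with the strong maximum principle applied to the linearized equation for $\partial_\nu w$. Compactness then gives $\eta(\nabla w)\le\eps$ in a small ball $B_\rho$ around any free boundary point (Corollary~\ref{c7}), where $\eta$ is essentially the distance to $\bar D$. This is upgraded to the quantitative bound $\eta(\nabla w)\le w^\xi$ via a maximum-principle argument for $\eta(\nabla w)-w^\xi-\varphi$ (Lemma~\ref{l72}); note this still allows $\nabla w$ to exit $\bar D$, so $\Delta w$ can be negative, but in a controlled way. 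The integral estimate $\int\eta^+/w\le C$ (Lemma~\ref{l73}) then bounds the total negativity, and the final step replaces your Riesz-representation/density argument by a level-set computation: integrating $\Delta f(w)$ for $f$ a smoothing of $t^+$ yields $\int_{\{w\in[\eps,2\eps]\}}|\nabla w|^2\le C\eps$, and non-degeneracy plus Lipschitz continuity give the matching lower bound $c(\kappa)\eps^{n-1}$ on each ball $B_{C\eps}(z)$, $z\in F(w)$. Your density argument for the measure $\Delta w$ could likely be adapted once one accepts the weaker gradient control $\eta(\nabla w)\le w^\xi$, but as written the subharmonicity claim is the gap.
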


For this we first prove the following lemma.

\begin{lem}\label{l71}
Assume that $w$ is a global Lipschitz solution to \eqref{general}. Then $\nabla w \in \overline D$. 
\end{lem}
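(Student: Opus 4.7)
The strategy is by contradiction, combining a blowup/compactness reduction with a Bernstein-type maximum principle on the tangential derivative of $w$ and the viscosity free boundary condition.

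\emph{Step 1 (separation and reduction).} Suppose toward contradiction that $\nabla w(x_0)\notin\overline D$ at some $x_0\in\{w>0\}$. Convexity of $D$ yields a unit vector $\xi$ with
\[
\xi\cdot\nabla w(x_0)\;>\;M\;:=\;\sup_{p\in\overline D}\xi\cdot p\;\ge\;0.
\]
By Lipschitzness of $w$ the quantity $S:=\sup_{\{w>0\}}\xi\cdot\nabla w$ is finite, and $S>M$. Choose $x_k\in\{w>0\}$ with $\xi\cdot\nabla w(x_k)\to S$. Using the Lipschitz rescaling invariance of \eqref{general}, the uniform Lipschitz bound, and the interior $C^{1,\alpha}$ estimate of Theorem \ref{lipth}, one extracts via translations and rescalings centered at the $x_k$ a subsequential limit $\tilde w$: still a global Lipschitz viscosity solution of \eqref{general}, with $\tilde w(0)>0$ and $G:=\xi\cdot\nabla\tilde w$ attaining its global supremum $S$ at the origin.

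\emph{Step 2 (linear equation and rigidity).} Differentiating \eqref{general} in direction $\xi$ gives, in $\{\tilde w>0\}$,
\[
\Delta G\;-\;\frac{\nabla h(\nabla\tilde w)}{\tilde w}\cdot\nabla G\;+\;\frac{h(\nabla\tilde w)}{\tilde w^{\,2}}\,G\;=\;0.
\]
On the open set $\{G>M\}$ one has $\nabla\tilde w\notin\overline D$, so \eqref{sign_h} gives $h(\nabla\tilde w)\le 0$ and the zeroth-order coefficient of the displayed equation is $\le 0$. Since $G$ attains its interior maximum $S>M\ge 0$ at the origin, the strong maximum principle forces $G\equiv S$ throughout the connected component $U$ of $\{\tilde w>0\}$ containing $0$. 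Integrating $\partial_\xi\tilde w\equiv S$ along the $\xi$-direction yields the rigid representation
\[
\tilde w(x)\;=\;S\,(x\cdot\xi)\;+\;g\bigl(x-(x\cdot\xi)\xi\bigr)\qquad\text{on }U,
\]
with $g\in C^{1,\alpha}$ on the projection of $U$ onto $\xi^\perp$.

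\emph{Step 3 (ruling out both topologies of $U$).} If $U=\R^n$, the identity $\tilde w(x-t\xi)=\tilde w(x)-St\to-\infty$ as $t\to+\infty$ contradicts $\tilde w\ge 0$. Otherwise $\partial U$ contains a free boundary point $y_0$; writing $y_0^\perp:=y_0-(y_0\cdot\xi)\xi$, the point $y_0^\perp$ lies in the interior of the projection of $U$ (since $y_0+t\xi\in U$ for small $t>0$), so $g$ is $C^{1,\alpha}$ near $y_0^\perp$. Letting $L$ be the affine Taylor polynomial of $g$ at $y_0^\perp$ and choosing $\varepsilon$ large, one defines
\[
\psi(x)\;:=\;S\,(x\cdot\xi)\;+\;L\bigl(x-(x\cdot\xi)\xi\bigr)\;-\;\varepsilon|x-y_0|^2,
\]
and checks that $\psi^+$ touches $\tilde w$ from below at $y_0\in F(\tilde w)$. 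But $\xi\cdot\nabla\psi(y_0)=S>M$ forces $\nabla\psi(y_0)\notin\overline D$, contradicting the viscosity supersolution form of the free boundary condition.

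The main obstacle is Step 1: one has to carry out the blow-up reduction so that the limit $\tilde w$ has $G$ attaining its supremum at an interior point where $\tilde w>0$. This requires controlling how near-maximizers of $G$ can approach $F(w)$ --- using that the free boundary condition keeps $\nabla w$ close to $\Gamma\subset\overline D$ near $F(w)$ --- and then selecting the Lipschitz rescaling scales so that both the non-degeneracy $\tilde w(0)>0$ and the viscosity solution structure (including the FB condition) pass to the limit. Once this limit profile is produced, the remainder is a clean Bernstein identity combined with the strong maximum principle and a test-function construction using the affine structure of $\tilde w$ on $U$.
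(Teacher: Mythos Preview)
Your approach is essentially the same as the paper's: rescale near near-maximizers of a directional derivative, differentiate the equation, apply the strong maximum principle to force the directional derivative to be constant on a component, and then contradict the viscosity free boundary condition with an explicit test function. The paper carries out Step~1 by the specific choice $r_k=w(x_k)$, which gives $\tilde w(0)=1$ directly and makes the interior $C^{1,\alpha}$ (in fact $C^{2,\alpha}$) convergence in a fixed ball automatic; you should make this rescaling explicit rather than leaving it as ``the main obstacle.''

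There is, however, a genuine gap in Step~3. You invoke only the $C^{1,\alpha}$ regularity of $g$, and then take $\psi(x)=S(x\cdot\xi)+L(x^\perp)-\varepsilon|x-y_0|^2$ with $L$ the affine jet of $g$. On $U$ this gives
\[
\tilde w(x)-\psi(x)=\bigl(g(x^\perp)-L(x^\perp)\bigr)+\varepsilon|x-y_0|^2 \ge -C|x^\perp|^{1+\alpha}+\varepsilon|x-y_0|^2,
\]
and since $|x^\perp|\le |x-y_0|$ the right-hand side is $\ge |x-y_0|^{1+\alpha}\bigl(-C+\varepsilon|x-y_0|^{1-\alpha}\bigr)$, which is \emph{negative} for $|x-y_0|$ small no matter how large $\varepsilon$ is. So $\psi^+$ does not touch $\tilde w$ from below and the contradiction does not close. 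The fix is to use the $C^{2,\alpha}$ interior regularity of $\tilde w$ (available from the Remark after Theorem~\ref{lipth}, since $h\in C^1$); then $g$ is $C^2$ near $y_0^\perp$, the error $g-L$ is $O(|x^\perp|^2)$, and taking $\varepsilon$ larger than the implied constant makes $\psi^+\le\tilde w$ on both sides of $F(\tilde w)$. This is exactly why the paper upgrades to $C^{2,\alpha}$ before constructing its quadratic comparison polynomial. You should also specify the free boundary point: take $y_0=-(\tilde w(0)/S)\,\xi$, obtained by sliding from the origin in the $-\xi$ direction, so that $y_0+t\xi\in U$ for $0<t<\tilde w(0)/S$ is immediate and $y_0^\perp=0$ lies in the interior of the projection.
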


\begin{proof} Since $D$ is convex, it suffices to show that $\nabla w$ belongs to the convex hull of $\overline D.$
Let $L:=\sup w_n$ and assume by contradiction that $$L > \max_{y \in D} e_n \cdot y,$$ which means that 
\begin{equation}\label{70}
\mbox{$h(y) \le 0$ if $y_n \ge L$.}
\end{equation} 
Let $x_k$ be a sequence of points for which $w_n$ approaches the limit $L$. For each $k$ we rescale $w$ into
$$w^k(x):=\frac {1}{r_k} w(x_k + r_k x), \quad r_k=w(x_k),$$
so that $$w^k(0)=1, \quad \partial _n w^k(0)= w_n(x_k), \quad |\nabla w^k|\le L.$$
We can extract a subsequence of the $w^k$'s which converges uniformly on compact sets to $\bar w$. 
Moreover, by Theorem \ref{lipth}, in a ball $B_c$ with $c$ universal, the convergence holds in the $C^{1,\alpha}$ norm due to the uniform $C^{1,\alpha}$ estimates. 
In conclusion, $\bar w$ solves the same equation, and
$$\bar w_n \le L = \bar w_n(0).$$
Differentiating in the $x_n$ direction we find
\begin{equation}\label{715}
\triangle \bar w _n = \frac{\nabla h}{\bar w} \cdot \nabla \bar w_n- \frac{h}{\bar w^2} \bar w_n ,
\end{equation}
and $h, \nabla h$ are evaluated at $\nabla \bar w$. 

At the origin $\bar w_n$ has a maximum and $h(\nabla \bar w) \le 0$ by \eqref{70}. The strong maximum principle implies that $\bar w_n$ is constant in the connected component of $\{\bar w>0\}$ which contains the origin. 
In particular the point $x_0:=-e_n/L$ belongs to the free boundary of $\bar w$, and since $\bar w\in C^{2,\alpha}$ near the origin (in view of Theorem \ref{lipth}), we find that $\p \{\bar w >0\}$ is $C^{2,\alpha}$ in a neighborhood of $x_0$. This means that we can touch $\bar w$ at $x_0$ by a quadratic polynomial $P$ with $P_n \ge L-\delta$, $\triangle P >0$ in a neighborhoof of $x_0$. Then we easily contradict the definition of viscosity solutions for the $w^k$'s and reach a contradiction.

\end{proof}

We define a convex function $\eta$ in a neighborhood of $D$ which is proportional to the distance to $D$. Precisely $\eta$ is such that
$$\eta=0 \quad \mbox{on $D$}, \quad \eta(y)\sim dist(y, D), \quad \eta \in C^2(D^c),$$  and
$\|D^2 \eta\| \leq C$ universal, by the $C^2$ regularity of the domain $D$.

By compactness, from Lemma \ref{l71} above we obtain the following corollary. 
\begin{cor}\label{c7}
Assume that $w$ is a viscosity solution to \eqref{general} in $B_1$ with $0 \in F(w)$. For any $\eps>0$ there exists $\rho(\eps)>0$ small such that $$\eta(\nabla w) \le \eps \quad \mbox{ in} \quad B_\rho \cap \{w>0\}.$$ 
\end{cor}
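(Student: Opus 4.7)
The plan is a blow-up/compactness argument reducing to Lemma \ref{l71}. Suppose the conclusion fails: there exist $\eps_0>0$ and a sequence $x_k \to 0$ with $x_k \in \{w>0\}$ and $\eta(\nabla w(x_k)) \geq \eps_0$. Since $w$ is Lipschitz by Theorem \ref{lipth}, the gradients $p_k:=\nabla w(x_k)$ are uniformly bounded; after extracting a subsequence, $p_k\to p^*$ with $\eta(p^*)\geq \eps_0$, so in particular $p^*\notin \overline D$.

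Next I rescale at the natural Lipschitz scale of the solution. Set $r_k:=w(x_k)$, which tends to $0$ since $w$ is continuous and $w(0)=0$, and define
$$w^k(x):=\frac{w(x_k+r_k x)}{r_k},$$
which is defined on a ball of radius $(1-|x_k|)/r_k\to\infty$. Because \eqref{general} is invariant under the Lipschitz rescaling, each $w^k$ is a viscosity solution with $w^k(0)=1$, $\nabla w^k(0)=p_k$, and uniform Lipschitz norm inherited from Theorem \ref{lipth}. By Arzel\`a--Ascoli I extract a subsequence converging locally uniformly on $\R^n$ to a Lipschitz function $\bar w\geq 0$ with $\bar w(0)=1$.

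Standard stability of viscosity solutions, combined with the interior and free-boundary regularity of Section 3, shows that $\bar w$ is a global Lipschitz viscosity solution to \eqref{general}. Since $\bar w(0)=1>0$, in a fixed neighborhood of $0$ the $w^k$ are positive and bounded below away from $0$, so the $C^{2,\alpha}$ estimates of Theorem \ref{lipth} apply uniformly there and $\nabla w^k \to \nabla \bar w$ in $C^\alpha$ near the origin; evaluating at the origin gives $\nabla \bar w(0) = p^*$. Lemma \ref{l71} then yields $\nabla \bar w \in \overline D$, so $p^* \in \overline D$ and $\eta(p^*)=0$, contradicting $\eta(p^*)\geq \eps_0$.

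The delicate step is justifying that the limit $\bar w$ is a bona fide \emph{global} viscosity solution of \eqref{general}: the degenerate interior equation must pass to the limit in $\{\bar w>0\}$ (which follows from the uniform $C^{2,\alpha}$ estimates on compactly contained subsets where $\bar w$ is bounded below), and the free boundary condition must be preserved under uniform convergence (a standard test-function/barrier comparison). These are precisely the compactness ingredients alluded to in the sentence preceding the statement, and once they are in place the conclusion is immediate.
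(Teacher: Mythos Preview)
Your argument is correct and is precisely the compactness argument the paper alludes to (the paper gives no details beyond ``By compactness, from Lemma \ref{l71}''). The blow-up at scale $r_k=w(x_k)$, the use of the uniform $C^{1,\alpha}$/$C^{2,\alpha}$ interior estimates from Theorem~\ref{lipth} to pass gradients to the limit, and the reduction to Lemma~\ref{l71} are exactly what is intended.
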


Next, we show the following.

\begin{lem}\label{l72}
Assume that $w$ is a viscosity solution to \eqref{general} in $B_2$ and $$\eta(\nabla w) \le  \eps_0, \quad \text{ in $B_2$.}$$ Then
$$\eta(\nabla w) \le w^\xi \quad \mbox{in $B_1$,}$$
with $\eps_0$ and $\xi$ sufficiently small.
\end{lem}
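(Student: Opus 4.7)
The plan is to apply a maximum principle argument to
$$G(x) := \eta(\nabla w(x)) - w(x)^\xi,$$
which is classical $C^2$ on the open set $B_{3/2}\cap\{w>0\}$ by Theorem \ref{lipth} (and a standard bootstrap using $h\in C^1$). Both summands of $G$ vanish on $F(w)$ (by the free-boundary condition $\nabla w\in\partial D$ and $w=0$), and $G\le\eps_0$ on $\partial B_{3/2}$. The goal is to rule out a strictly positive maximum of $G$ in $B_1$; this forces any maximum into the interior of a slightly larger ball $B_r$, $r\in(1,3/2)$, after subtracting a small quadratic cutoff. The free boundary contribution is handled using Corollary \ref{c7} via approximation in $\{w\ge\tau\}$ and $\tau\to 0^+$.

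At a hypothetical positive interior maximum $x_0$ one has $\nabla w(x_0)\notin\ov D$, so $h(\nabla w(x_0))\le 0$ and $\eta$ is $C^2$ near $\nabla w(x_0)$. The vanishing gradient condition reads $D^2w(x_0)\,\nabla\eta(\nabla w(x_0))=\xi w(x_0)^{\xi-1}\nabla w(x_0)$. Combining this identity with the convexity of $\eta$ and differentiating $\Delta w=h(\nabla w)/w$, a direct computation gives
\[
\Delta G(x_0)\;\ge\;\xi\, w^{\xi-2}\bigl[\nabla h(\nabla w)\!\cdot\!\nabla w-h(\nabla w)+(1-\xi)|\nabla w|^2\bigr]\;-\;\frac{h(\nabla w)}{w^2}\,\nabla\eta(\nabla w)\!\cdot\!\nabla w.
\]

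To derive a contradiction with $\Delta G(x_0)\le 0$ I would use the geometric hypotheses as follows. The star-shape conditions \eqref{fnu}--\eqref{star} give $\nabla\eta(\nabla w)\cdot\nabla w\ge\delta^2>0$ for $\nabla w$ close to $\partial D$ (since $\nabla\eta$ is the outward normal at the projection $y_0$ of $\nabla w$ onto $\partial D$, and $\omega_{y_0}\cdot y_0\ge\delta|y_0|\ge\delta^2$), while the $C^2$ regularity of $\partial D$ together with the transversality $|\nabla h|>0$ on $\partial D$ yields the linear non-degeneracy $-h(\nabla w)\ge c_2\,\eta(\nabla w)$ on a small outer neighborhood of $\partial D$. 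Since $G(x_0)>0$ means $\eta(\nabla w(x_0))>w(x_0)^\xi$, the last displayed term dominates:
\[
-\frac{h(\nabla w)}{w^2}\,\nabla\eta(\nabla w)\!\cdot\!\nabla w\;\ge\;\delta^2 c_2\,\frac{\eta(\nabla w)}{w^2}\;>\;\delta^2 c_2\,w^{\xi-2},
\]
while the bracketed first term is bounded in absolute value by $C\xi w^{\xi-2}$, where $C$ depends on the bounds on $|\nabla h|$ and $|\nabla w|$ near $\partial D$. Hence $\Delta G(x_0)\ge(\delta^2 c_2-C\xi)\,w^{\xi-2}>0$ for $\xi$ universally small, contradicting $\Delta G(x_0)\le 0$.

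The hard part will be the boundary bookkeeping: rigorously forcing the maximum to be interior rather than on $\partial B_{3/2}$ (handled by $\eps_0$ small and a quadratic barrier modification, which introduces only lower-order perturbations in the above calculation) or escaping to $F(w)$ (handled by the $C^{1,\alpha}$-regularity of $w$ in the positive phase together with Corollary \ref{c7}, ensuring $\eta(\nabla w)$ is continuous up to $F(w)$ and vanishes there). The core interior calculation is a Bernstein-type argument whose positivity crucially rests on three ingredients: the convexity of $\eta$ (from convexity of $D$), the star-shape of $D$, and the linear non-degeneracy of $h$ at $\partial D$.
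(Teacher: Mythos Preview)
Your approach is essentially the paper's: apply the maximum principle to $\eta(\nabla w)-w^\xi$ (minus a cutoff localizing to $B_1$), compute the Laplacian at a positive interior maximum using convexity of $\eta$, the differentiated equation, and the first-order condition $\nabla G=0$. Your displayed lower bound for $\Delta G(x_0)$ is correct and matches the paper's computation.

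The genuine gap is your invocation of ``transversality $|\nabla h|>0$ on $\partial D$'' to obtain the linear non-degeneracy $-h\ge c_2\,\eta$. This is \emph{not} among the hypotheses: the paper only assumes $h\in C^1$, $h=0$ on $\partial D$, and the sign condition \eqref{sign_h}. In fact the introduction explicitly allows $h$ to vanish in a whole neighborhood of $\partial D$ (the classical one-phase case). When $c_2=0$ your final inequality $\Delta G(x_0)\ge(\delta^2 c_2-C\xi)w^{\xi-2}$ collapses, since you discarded the positive contribution $(1-\xi)|\nabla w|^2\ge c$ inside the bracket by bounding the whole bracket in absolute value.

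The fix is exactly the dichotomy the paper uses at the end of its proof. Keep the term $(1-\xi)|\nabla w|^2$ visible: if $|\nabla h(\nabla w)\cdot\nabla w|$ is small (say $<c/2$), then the bracket $[\nabla h\cdot\nabla w-h+(1-\xi)|\nabla w|^2]$ is already positive (recall $-h\ge 0$ and $|\nabla w|\ge\delta$ near $\partial D$), and since the second term $-\tfrac{h}{w^2}\nabla\eta\cdot\nabla w\ge 0$ as well, one gets $\Delta G(x_0)>0$ directly. If instead $|\nabla h\cdot\nabla w|\ge c/2$, then $|\nabla h|$ is bounded below near $\nabla w(x_0)$, and the $C^1$ regularity of $h$ together with $h=0$ on $\partial D$ gives $|h|\ge c\,\eta\ge c\,w^\xi$; now the term $c\,|h|/w^2\ge c\,w^{\xi-2}$ dominates the possibly negative $\xi\,\nabla h\cdot\nabla w\,w^{\xi-2}$ provided $\xi$ is small. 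In both cases you also need $w(x_0)$ small to absorb the cutoff errors (the $Cw^{1-\xi}$ and $Cw^{2-\xi}$ terms coming from $\nabla\varphi$ and $\Delta\varphi$); this follows from $w^\xi\le\eta\le\eps_0$ at the maximum, which you should make explicit.
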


\begin{proof} Let $\varphi$ be a nonnegative $C^2$ function which vanishes in $B_1$ and $\varphi=1$ on $\p B_2$. 
We show that $$g(x):=\eta(\nabla w) - w^\xi - \varphi(x)$$ cannot have a positive maximum in the region $B_1 \cap \{w>0\}$. Notice that $g\le 0$ on $\p B_1$ and, by Corollary \ref{c7}, $\limsup g \le 0$ as we approach $\p  \{w>0\}$. Assume by contradiction that $g$ achieves a positive maximum $x_0$. At $x_0$, 
\be\label{eta}
w ^\xi \le \eta (\nabla w) \le \eps_0,
\end{equation} 
and $\nabla w(x_0)$ belongs to $D^c$ and is sufficiently close to $\p D$. Then $\nabla g=0$ implies
\begin{equation}\label{72}
\partial_s(\eta(\nabla w))=\xi w^{\xi-1}w_s  + \varphi_s.
\end{equation}
At $x_0$ we compute (the functions $\eta$ and $h$ and their derivatives are evaluated at $\nabla w(x_0)$)
$$\triangle \eta(\nabla w)=\eta_k  \, \triangle w_k + \eta_{kl} \,  w_{ki}w_{li}.$$
The last term is nonnegative by the convexity of $\eta$, and after replacing $\triangle w_k$ (see \eqref{715}) we obtain
$$\triangle \eta(\nabla w) \ge\frac 1 w \eta _k h_s w_{ks} -  \frac {h}{w^2} \eta_k w_k.$$
Using $\eta_k(y)y_k \ge c$ and $h\le 0$ in $D^c$ we find that the second term is nonnegative hence
 $$\triangle \eta(\nabla w) \ge\frac 1 w  h_s \partial_s(\eta(\nabla w)) + c \frac {|h|}{w^2},$$
and by \eqref{72}, 
$$ \triangle \eta(\nabla w) \ge \xi w^{\xi-2}  h_s w_s  - C w^{-1} + c \frac {|h|}{w^2}. $$
On the other hand
$$\triangle w^\xi =  \xi w^{\xi-2}\left(h +(\xi-1) |\nabla w|^2\right) \le -c \xi w^{\xi-2},$$
hence
$$\triangle g \ge w^{\xi-2} \left(c \xi + c w^{-\xi} |h| + \xi h_s w_s - C w^{1-\xi} - C w^{2-\xi} \right)>0,$$
and we reach a contradiction.

In the last inequality we used that $w$ is sufficiently small, and then either 
$|h_s w_s| <c/2$ and the claim is clear or $C\ge |h_s w_s| \ge 1/2$ which together with $h=0$ on $\p D$ and the $C^1$ smoothness of $h$ gives (see \eqref{eta}) $|h| \ge c \eta \ge c w^\xi$, and again the inequality follows provided that $\xi$ is sufficiently small. 

\end{proof}

The lemma above leads to the following integral estimate.

\begin{lem}\label{l73}
Assume that $w$ is a viscosity solution to \eqref{general} in $B_2$ and $$\eta(\nabla w)\le \eps_0 \quad \text{in $B_2$.}$$ Then
$$\int_{B_1 \cap \{w>0\}} \frac{(\eta(\nabla w))^+}{w} dx \le C.$$
\end{lem}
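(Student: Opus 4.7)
My plan is to work from the pointwise differential inequality established in the proof of Lemma~\ref{l72}: in $\{w>0\}$,
\[
\triangle \eta(\nabla w) \ge \frac{1}{w}\, h_s(\nabla w)\,\p_s \eta(\nabla w) + c\,\frac{\eta(\nabla w)}{w^2},
\]
where the second term uses $|h(\nabla w)|\ge c\,\eta(\nabla w)$ for $\nabla w$ close to $\p D$ from $\bar D^c$, and both sides vanish when $\nabla w \in D$ (since $\eta$ and its derivatives vanish on $\bar D$). Multiplying by $w\phi$ for a nonnegative cutoff $\phi\equiv 1$ on $B_1$ supported in $B_{3/2}$ and integrating on $\{w>\tau\}\cap B_{3/2}$ reduces the desired estimate to upper bounds on $\int w\phi\,\triangle\eta(\nabla w)$ and $-\int \phi\, h_s(\nabla w)\,\p_s\eta(\nabla w)$.

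For the first integral, I would apply Green's second identity to transfer the Laplacian from $\eta(\nabla w)$ onto $w\phi$; the boundary contributions at the level set $\{w=\tau\}$ vanish along a subsequence $\tau_n\to 0$ because Lemma~\ref{l72} gives $\eta(\nabla w)\le w^\xi\le \tau^\xi$ there, and a coarea/Fubini argument supplies the requisite integrability. After expanding $\triangle(w\phi) = \phi\, h(\nabla w)/w + 2\nabla\phi\cdot\nabla w + w\,\triangle\phi$, the term $\int \eta(\nabla w)\,\phi\,h(\nabla w)/w$ is nonpositive pointwise (since $\eta\cdot h\le 0$: opposite signs across $\p D$, both vanishing in $D$) and can be dropped, while the remaining pieces are bounded by $C$ using the universal Lipschitz estimate from Theorem~\ref{lipth} together with $\eta(\nabla w)\le \eps_0$.

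For the drift integral, I would exploit the pointwise symmetry $h_s(\nabla w)\,\p_s\eta(\nabla w) = \eta_s(\nabla w)\,\p_s h(\nabla w)$ (a consequence of $w_{ks}=w_{sk}$) to rewrite it as $\int \phi\,\nabla_p\eta(\nabla w)\cdot\nabla_x[h(\nabla w)]$, and then integrate by parts in $x$. The boundary term on $F(w)$ vanishes because $h(\nabla w)=0$ on $F(w)$ (by the viscosity free boundary condition $\nabla w\in \p D$ combined with $h|_{\p D}=0$). What remains is an easily bounded piece together with $-\int \phi\,h(\nabla w)\,\mathrm{tr}(\eta_{pp}(\nabla w)\,D^2 w)$, which is supported on $\{\nabla w\in \bar D^c\}$ where $|h(\nabla w)|\le C\,\eta(\nabla w)\le Cw^\xi$ by Lemma~\ref{l72}; I would control this using interior $W^{2,p}$ bounds on $w$ in $\{w>\tau\}$. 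The main obstacle will be the rigorous handling of this $D^2 w$ factor near the free boundary, most cleanly addressed by smoothly approximating $h$ and passing to the limit with universal constants.
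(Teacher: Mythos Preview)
Your approach has two genuine gaps, and it is also substantially more involved than what the paper does.

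\emph{First gap.} The pointwise inequality you invoke does not give a lower bound by $c\,\eta(\nabla w)/w^2$. What the computation in Lemma~\ref{l72} actually yields (when $\nabla w\in D^c$) is
\[
\triangle \eta(\nabla w)\ \ge\ \frac{1}{w}\,h_s(\nabla w)\,\p_s\eta(\nabla w)\ +\ c\,\frac{|h(\nabla w)|}{w^2}.
\]
Your replacement of $|h|$ by $\eta$ assumes $|h(p)|\ge c\,\eta(p)$ near $\p D$ from outside, i.e.\ a uniform lower bound $|\nabla h|\ge c$ on $\p D$. This is not part of the hypotheses; only $h\in C^1$ and $h|_{\p D}=0$ are assumed, which give $|h|\le C\eta$, the opposite inequality. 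Consequently, after multiplying by $w\phi$ and integrating you would at best control $\int \phi\,|h(\nabla w)|/w$, not $\int \phi\,\eta(\nabla w)/w$.

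\emph{Second gap.} After your integration by parts on the drift term, the residual
\[
-\int \phi\, h(\nabla w)\,\mathrm{tr}\bigl(\eta_{pp}(\nabla w)\,D^2 w\bigr)
\]
requires integrable control of $|D^2 w|$ up to $F(w)$. The factor $|h(\nabla w)|\le C w^\xi$ from Lemma~\ref{l72} is far too weak by itself: interior estimates for the degenerate equation give $|D^2 w|\lesssim w^{-1}$ at scale $w$, so the integrand is only $\lesssim w^{\xi-1}$, and bounding $\int w^{\xi-1}$ over $\{w>0\}$ is precisely the difficulty you are trying to resolve. Your proposed fix (``approximate $h$ and pass to the limit with universal constants'') does not supply the missing estimate. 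Note also that $\eta$ is only Lipschitz across $\p D$, so $\eta_{pp}(\nabla w)$ is a measure supported on $\{\nabla w\in\p D\}$, adding further regularity issues.

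\emph{What the paper does instead.} The paper avoids $\triangle\eta(\nabla w)$ entirely. The point is that Lemma~\ref{l72} already gives the pointwise bound $(\eta(\nabla w))^+/w\le w^{\xi-1}$, so it suffices to bound $\int_{B_1\cap\{|\nabla w|>c\}} w^{\xi-1}\,dx$. For this one takes $f$ a $C^{1,1}$ smoothing of $(t^+)^{1+\xi}$ and computes $\triangle f(w)=f''(w)\bigl(\tfrac{f'}{f'' w}\,h(\nabla w)+|\nabla w|^2\bigr)\ge c\,f''(w)|\nabla w|^2$, using only $h(\nabla w)\ge -C\eps_0$. Integrating over $B_1$ and using the divergence theorem gives $\int f''(w)\,|\nabla w|^2\le C$, and since $f''(w)\to w^{\xi-1}$ as the smoothing parameter tends to zero, the result follows. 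This argument stays at the level of first derivatives of $w$ and needs no control of $D^2 w$.
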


\begin{proof} Let $f(t)$ be a $C^{1,1}$ smoothing of $(t^+)^{1+\xi}$, i.e 
$$f(0)=f'(0)=0, \quad f''=\min\{\eps^{-1}, t^{\xi-1}\}.$$
We have $h(\nabla w) \ge -C \eps_0$ so, if $\eps_0$ is sufficiently small then
$$\triangle f(w)=f''(w)\left ( \frac{f'}{f'' w}h+|\nabla w|^2 \right) \ge c f''(w)|\nabla w|^2 .$$
 We integrate and use
$$\int_{B_1} \triangle f(w) dx = \int_{\p B_1}\partial_\nu f(w) \le C,$$
hence
$$\int_{B_1 \cap \{|\nabla w|>c\}} f''(w) dx \le C.$$
Now the result follows by letting $\eps \to 0$ and noticing that by Lemma \ref{l72}, $$(\eta(\nabla w))^+w^{-1} \le w ^{\xi-1}=\lim_{\eps \to 0}f''(w),$$
and $\eta^+=0$ when $|\nabla w|<c$ with $c$ small. 
\end{proof}

We are now ready to show the proof of Theorem \ref{T7}.

\begin{proof}

By Corollary \ref{c7} we may assume that after some initial dilation around a free boundary point we have $\eta(\nabla w) \le \eps_0$ in $B_2$. Let $f$ be a smoothing of $t^+$, i.e.
$f(0)=f'(0)=0$ and $f''\ge 0$ supported on $[\eps, 4 \eps]$. From the computations above with this choice of $f$ we find
$$C \ge  \int_{B_1} \triangle f dx = \int_{B_1} f'' |\nabla w|^2 + f' h w^{-1}  \, dx \ge \int_{B_1} \frac c \eps \chi_{w\in [\eps,2\eps]} |\nabla w|^2 dx - C,$$
where in the last inequality we have used $h w^{-1} \ge -C \eta^+ w^{-1}$ and Lemma \ref{l73}.
 
On a ball of radius $C \eps$ around a free boundary point $z$ we have due to non-degeneracy \eqref{nondeg} and the Lipschitz continuity
$$\frac 1 \eps \int_{B_{C\eps}(z) \cap \{\eps < w <2 \eps  \}} |\nabla w|^2 dx \ge c(\kappa) \eps^{n-1},$$
and the result easily follows.

\end{proof}

We conclude the section with the following lemma.

\begin{lem}\label{reduced}
Assume $w$ satisfies the hypotheses of Theorem $\ref{T7}$ and $0 \in \p^*\{w>0\}$. If $\nu$ is the unit inner normal to $F(w)$ at $0$ then
$$ (f(\nu)x\cdot \nu -r\sigma(r))^+ \le w \le (f(\nu)x \cdot \nu+r\sigma(r))^+ \quad \mbox{in $B_r$},$$
with $\sigma(r)\to 0$ as $r \to 0$. 
\end{lem}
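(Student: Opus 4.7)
The plan is a blow-up and classification argument based on the Lipschitz scaling invariance of \eqref{general}.

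\emph{Contradiction setup.} If the conclusion fails, there exist $\eps_0>0$ and $r_k\downarrow 0$ such that the rescalings $w_k(x):= w(r_kx)/r_k$, each a viscosity solution of \eqref{general} in $B_{1/r_k}$, violate
\[
(f(\nu)x\cdot\nu -\eps_0)^+ \le w_k \le (f(\nu)x\cdot\nu +\eps_0)^+ \quad\text{on } B_1.
\]
By Theorem \ref{LL} (the Lipschitz bound is universal and scale-invariant) together with a diagonal argument, along a subsequence $w_k\to w_\infty$ locally uniformly in $\R^n$. Stability of viscosity solutions gives that $w_\infty \ge 0$ is a global solution of \eqref{general} with $w_\infty(0)=0$.

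\emph{Positivity set of $w_\infty$ is the half-space $H^+:=\{x\cdot\nu>0\}$.} The reduced boundary hypothesis means $\chi_{\{w_k>0\}}\to \chi_{H^+}$ in $L^1_{\mathrm{loc}}$. Combined with locally uniform convergence of $w_k$ and continuity of $w_\infty$, this forces $w_\infty\equiv 0$ on $\overline{H^-}$. The reverse inclusion $H^+\subset \{w_\infty>0\}$ comes from passing the non-degeneracy \eqref{nondeg} to the limit: any interior free boundary point $y\in H^+$ of $w_\infty$ would be approached by $y_k \in F(w_k)$ with $\max_{\p B_\rho(y_k)} w_k\ge \kappa\rho$, incompatible with $w_\infty\equiv 0$ in a neighborhood of $y$.

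\emph{Classification as the linear profile.} The goal is $w_\infty(x) = f(\nu)(x\cdot\nu)^+$. The upper bound is immediate from Lemma \ref{l71}: $\nabla w_\infty\in \overline D$ gives $\p_\nu w_\infty\le f(\nu)$, and integration from $\p H^+$ yields $w_\infty\le f(\nu)(x\cdot \nu)^+$. For the matching lower bound, perform a secondary blow-up $\tilde w(x):= \lim_{\lambda\downarrow 0} w_\infty(\lambda x)/\lambda$, a $1$-homogeneous Lipschitz global solution with positivity set $H^+$. The $1$D profile $g(s):=\tilde w(s\nu)$ solves
\[
g''(s) = \frac{h(g'(s)\nu)}{g(s)}, \quad s>0, \qquad g(0)=0, \quad g'(0)=f(\nu),
\]
the last condition coming from the free boundary condition in \eqref{general}. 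Lemma \ref{l71} gives $g'\le f(\nu)$, and the sign condition \eqref{sign_h} together with $g'\nu\in\overline D$ gives $h(g'\nu)\ge 0$, hence $g''\ge 0$ and $g'$ is non-decreasing. Combined with $g'\le f(\nu)=g'(0)$, this forces $g'\equiv f(\nu)$, i.e.\ $\tilde w(s\nu)=f(\nu)s$. The upgrade off-axis is obtained by applying the maximum principle to $G := f(\nu)(x\cdot\nu)^+ - w_\infty\ge 0$, a non-negative Lipschitz superharmonic function on $H^+$ (since $\Delta G = -h(\nabla w_\infty)/w_\infty\le 0$ by \eqref{sign_h}) vanishing on $\p H^+$; Hopf's lemma combined with the bound $\p_\nu G \ge 0$ from Lemma \ref{l71} forces $G\equiv 0$.

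\emph{Conclusion.} Uniform convergence $w_k\to f(\nu)(x\cdot\nu)^+$ on $B_1$ then contradicts the supposed failure of flatness at level $\eps_0$. The main obstacle is the classification step: neither Lemma \ref{l71} nor the free boundary condition alone suffices to pin down $w_\infty$, and the argument crucially exploits the sign structure \eqref{sign_h} (which produces $g''\ge 0$ in the $1$D reduction) and the Hopf-type upgrade to rule out non-trivial half-space global solutions.
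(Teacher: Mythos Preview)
Your blow-up framework, the identification of the positivity set of $w_\infty$ with $H^+$, and the upper bound $w_\infty\le f(\nu)(x\cdot\nu)^+$ via Lemma~\ref{l71} and the convexity of $D$ are all correct and coincide with the paper's argument. The gap is in the classification step, where your route departs from the paper's.

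First, the one-dimensional reduction is not valid. Writing $g(s)=\tilde w(s\nu)$ does \emph{not} yield $g''=h(g'\nu)/g$: the equation $\Delta\tilde w = h(\nabla\tilde w)/\tilde w$ only reduces to a one-variable ODE along the $\nu$-axis if $\tilde w$ depends solely on $x\cdot\nu$, which you have not shown (and $1$-homogeneity, itself unjustified without a monotonicity formula, would not imply it). Likewise, the boundary datum $g'(0)=f(\nu)$ is not something the viscosity free boundary condition hands you directly. Second, and more seriously, the Hopf step does not close. With $G=f(\nu)(x\cdot\nu)^+-w_\infty$ superharmonic and nonnegative in $H^+$, Hopf at the origin gives $\partial_\nu G(0)>0$ in the $\liminf$ sense. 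Your ``contradiction'' is that Lemma~\ref{l71} gives $\partial_\nu G\ge 0$; but these two statements are perfectly compatible. To actually violate the subsolution free boundary condition you would need a test function $\psi$ with $\nabla\psi(0)\in D$ touching $w_\infty$ from above in a full neighborhood of $0$, and Hopf alone (a pointwise normal-derivative statement) does not produce such a $\psi$.

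The paper avoids all of this by a direct barrier argument: once the strong maximum principle gives $w_\infty<f(\nu)(x\cdot\nu)$ strictly in $H^+$, one finds $\eps>0$ with $w_\infty\le f(\nu)(x\cdot\nu)-\eps$ on a compact slice $\{x\cdot\nu=l\}\cap B_1$, and then slides the explicit supersolution constructed in Lemma~\ref{main} (the quadratic perturbation $\Psi_t$ with $\nabla\Psi_t\in D$) from above to conclude $w_\infty\le (f(\nu)x\cdot\nu - c\eps)^+$ near $0$. This forces $0$ into the interior of $\{w_\infty=0\}$, contradicting $0\in F(w_\infty)$. The barrier already encodes the free boundary condition quantitatively, which is exactly what your Hopf argument is missing.
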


\begin{proof}
We need to show that any blow-up sequence of rescalings $w_r(x)=r^{-1}w(rx)$ with $r \to 0$ converges to $f(\nu) (x \cdot \nu)^+$. 
Let $\bar w$ be such a blow-up limit. Assume for simplicity of notation that $\nu=e_n$ and $f(\nu)=1$. 

The non-degeneracy and Lipschitz continuity imply that the positive set $\{w>0\}$ has positive density in any ball centered at a free boundary point.
This together with our assumption that $0 \in \p^*\{w>0\}$ gives that $0 \in F(\bar w)$ and
\be\label{720}
\bar w=0 \quad \mbox{ in $x_n \le 0$.}
\ee 
On the other hand $\nabla \bar w \in \bar D$ by Lemma \ref{l71}, and then we easily obtain $$ \bar w \le x_n^+,$$
from the convexity of $D$ and \eqref{720}. Assume by contradiction that $\bar w$ does not coincide with $x_n^+$. Then, by the strong maximum principle we have that $\bar w < x_n^+$ in $x_n > 0$. In particular we can find $\eps>0$ such that
$$\bar w \le  x_n^+ - \eps \quad \mbox{on} \quad B_1 \cap \{x_n = l\},$$
with $l$ small, universal. Now we can argue as in the proof of Lemma \ref{main} below and construct a barrier by above to conclude
$$\bar w \le (x_n - c \eps)^+ \quad \mbox{near the origin.}$$
This shows that $0$ is an interior point of $\{\bar w=0\}$ which contradicts $0 \in F(\bar w)$. 

\end{proof}

\section{Harnack Inequality}

In this section we prove a Harnack type inequality for viscosity solutions to \eqref{general}, which satisfy a flatness assumption. This will be the key ingredient in the improvement of 
flatness argument leading to the $C^{1,\alpha}$ regularity of flat free boundaries, that is Theorem \ref{flat_thm}. We follow the strategy from \cite{D}.

The constants in this section depend on the dimension $n$, the $C^1$ norm of $\p D$, the constant $\delta$ in
\eqref{fnu},\eqref{star}, and the Lipschitz norm of $h$ in a neighborhood of $\p D$.
Recall that $h$ satisfies \eqref{sign_h} which is important in our analysis since we can construct comparison subsolutions $\Psi^+$ with
$$ \triangle \Psi >0, \quad \nabla \Psi \notin \bar D \quad \Longrightarrow \quad \triangle \Psi^+ > 0 \ge \frac{h(\nabla \Psi^+)}{\Psi^+},$$
and supersolutions $\Phi^+$ with
\be\label{Phi}
 \triangle \Phi <0, \quad \nabla \Phi \in D \quad \Longrightarrow \quad \triangle \Phi^+ < 0 \le \frac{h(\nabla \Phi^+)}{\Phi^+}.
\ee
We also assume for simplicity that 
\be\label{en}
e_n \in \p D.
\ee
We wish to prove the following result.

\begin{thm}[Harnack inequality]
\label{HI} Let $w$ be a viscosity solution to \eqref{general} in $B_2$, and assume \eqref{en} holds.
There exist universal constants $\bar
\ep, \eta$,  such that if $w$ satisfies at some point $x_0 \in B_2$

\be\label{osc} (x_n+ a_0)^+ \leq w(x) \leq (x_n+ b_0)^+ \quad
\text{in $B_r(x_0) \subset B_2,$}\ee
 and
$$b_0 - a_0 \leq \ep r, $$ for some $\ep \leq \bar \ep,$ then
$$ (x_n+ a_1)^+ \leq w(x) \leq (x_n+ b_1)^+ \quad \text{in
$B_{r \eta}(x_0)$},$$ with
$$a_0 \leq a_1 \leq b_1 \leq b_0, \quad
b_1 -  a_1\leq (1-c)\ep r, $$ and $0<c<1$ universal.
\end{thm}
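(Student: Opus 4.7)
The plan is to adapt De Silva's dichotomy-plus-barrier strategy from \cite{D} to the degenerate setting. After the Lipschitz rescaling $\tilde w(x) = w(x_0+rx)/r$, which preserves \eqref{general}, we reduce to $x_0=0$, $r=1$, with $(x_n+a_0)^+ \le w \le (x_n+b_0)^+$ on $B_1$ and $b_0-a_0 \le \ep$. Fix a reference point $\bar x := \tfrac{1}{5}e_n$; for $\bar\ep$ sufficiently small we have $\bar x_n + a_0 \ge c_0 > 0$ universal, so $\bar x$ lies well inside $\{w>0\}$. The dichotomy is based on whether $w(\bar x) \ge \bar x_n + \tfrac{a_0+b_0}{2}$ (in which case we aim for the lower improvement $a_1 = a_0 + c\ep$, $b_1 = b_0$) or the reverse (in which case we aim for the upper improvement via a symmetric supersolution barrier of the type \eqref{Phi}). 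I treat only the first case.

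Set $v := w - (x_n+a_0)$, so $v \ge 0$ on $\{w>0\}$ and $v(\bar x) \ge \ep/2$. In the good region $\Omega := B_{3/4} \cap \{x_n + a_0 > c_0/2\}$, the flatness gives $w \ge c_0/2$ while Theorem \ref{LL} bounds $|\nabla w|$; since $h \in C^1$ and $h(e_n) = 0$ by \eqref{en}, interior Schauder estimates combined with the $O(\ep)$ oscillation of $w-x_n$ force $|\nabla w - e_n|$ small and consequently $h(\nabla w)/w = O(\ep)$ in a compact subregion of $\Omega$. Hence $v/\ep$ is a nonnegative function with bounded Laplacian there, and the classical Krylov--Safonov Harnack inequality yields $v \ge c_1 \ep$ on a ball $B_{\rho_0}(\bar x)$ with $\rho_0, c_1 > 0$ universal.

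To propagate this pointwise lower bound down to the free boundary, construct a comparison function $\psi(x) := x_n + a_0 + c_2 \ep\, G(x)$, where $G$ is a smooth radial profile modelled on a Green's function (e.g.\ $G(x) = |x - \lambda e_n|^{-(n-1)} - R^{-(n-1)}$ for $\lambda$ large) normalized so that $\Delta G \ge c > 0$ pointwise, $\nabla G \cdot e_n > 0$ throughout the relevant region, $G \ge 1$ on $B_{\rho_0}(\bar x)$, $G \le 0$ outside a bounded set, and $G \ge c_3 > 0$ on a small ball $B_\eta$ about the origin. The key verification is that $\psi^+$ is a strict subsolution of \eqref{general}: using \eqref{en} and the star-shapedness \eqref{star} (which gives $\omega \cdot e_n \ge \delta$ for the outward normal $\omega$ to $\p D$ at $e_n$), the perturbation $\nabla \psi = e_n + c_2 \ep \nabla G$ lies strictly outside $\bar D$, so by \eqref{sign_h} $h(\nabla \psi) \le 0$ and $\Delta \psi > 0 \ge h(\nabla \psi)/\psi$ in the interior, while the same gradient condition enforces the free boundary subsolution inequality on $F(\psi)$ (cf.\ the discussion preceding \eqref{Phi}). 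Choosing $c_2$ small relative to $c_1$ gives $\psi^+ \le w$ on $\p B_{3/4}$ (from the Harnack bound on $B_{\rho_0}(\bar x)$ and $\psi \le (x_n+a_0)^+$ elsewhere), and the comparison principle for viscosity sub/supersolutions of \eqref{general} then yields $\psi^+ \le w$ throughout, hence $w \ge x_n + a_0 + c_2 c_3 \ep$ on $B_\eta$, which is the desired improvement.

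The main obstacle is the joint design of $G$: the interior strict inequality $\Delta \psi > 0$, the free boundary direction condition $\nabla \psi \notin \bar D$ on $F(\psi)$, and the pointwise ordering $\psi^+ \le w$ on the auxiliary boundary must all hold simultaneously. The hypotheses \eqref{sign_h} and \eqref{star} are essential here: together they guarantee that any perturbation of $e_n \in \p D$ in a direction with positive $e_n$-component exits $\bar D$ (where $h \le 0$), so that $\psi^+$ automatically satisfies the degenerate equation in the subsolution sense. The Harnack step in the good region is the other delicate point, since \eqref{general} is not uniformly elliptic near $F(w)$; the localization to $\Omega$ and the vanishing $h(e_n)=0$ are what allow the standard linearization to go through.
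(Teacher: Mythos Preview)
Your strategy is essentially the paper's: reduce by Lipschitz rescaling, use interior Harnack for $\tilde w = (w-x_n)/\ep$ in a region bounded away from $F(w)$, then push the gain down to the free boundary with a subsolution barrier of the form $x_n + \ep\,c\,(\text{profile})$. The paper packages the last two steps as Lemma~\ref{main} and then deduces Theorem~\ref{HI} by a short case analysis on the size of $a_0$. Two points in your write-up need correction.

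First, the free-boundary verification for your barrier is wrong as stated. You assert that ``any perturbation of $e_n\in\p D$ in a direction with positive $e_n$-component exits $\bar D$,'' and you only require $\nabla G\cdot e_n>0$. Star-shapedness \eqref{star} gives $\omega\cdot e_n\ge\delta$ for the outer normal $\omega$ at $e_n$, but this does \emph{not} force every vector with positive $n$-th component to have positive $\omega$-component; when $\omega\ne e_n$ there are directions with $e_n$-component positive and $\omega$-component negative, and those perturbations enter $D$. The correct condition is $\omega\cdot\nabla G>0$. For your radial $G$ centered at $\lambda e_n$ one computes $\omega\cdot\nabla G(x)=c(x)\,(\lambda\omega_n-\omega\cdot x)$ with $c(x)>0$, so for $\lambda>\delta^{-1}$ the correct inequality does hold on $B_1$ and your barrier is salvageable; but the reasoning you gave is not. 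The paper sidesteps this by building the barrier directly around $\omega$: it takes $Q(x)=-|x'-(\omega'/\omega_n)x_n|^2+Ax_n^2+x_n$, for which $\omega\cdot\nabla Q=2A\omega_n x_n+\omega_n>0$ by design.

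Second, your claim ``for $\bar\ep$ sufficiently small we have $\bar x_n+a_0\ge c_0>0$'' is unjustified: nothing in the hypothesis bounds $a_0$. The paper handles this with a trichotomy: if $a_0<-1/5$ the conclusion is trivial ($B_{1/10}$ lies in the zero phase); if $a_0>1/5$ one applies standard Harnack to $v/\ep$ in $B_{1/5}\subset\{w>0\}$; only when $|a_0|\le 1/5$ does one translate to put the free-boundary strip near the origin and invoke the barrier lemma. You should include this split. Finally, your interior Harnack step is workable but the ``Schauder forces $|\nabla w-e_n|$ small'' justification is circular; the clean route (used in the paper) is to note $|\Delta\tilde w|\le C(l)|\nabla\tilde w|$ from $h(e_n)=0$ and the Lipschitz bound on $h$, then invoke Lemma~\ref{delta1} to get $|\nabla\tilde w|$ bounded before applying Harnack.
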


Before giving the proof we deduce an important consequence.

If $w$ satisfies \eqref{osc}
 with, say $r=1$, then we can apply Harnack inequality
repeatedly and obtain $$\label{osc2} (x_n+ a_m)^+ \leq w(x)
\leq (x_n+ b_m)^+ \quad \text{in $B_{\eta^{m}}(x_0)$}, $$with
$$b_m-a_m \leq (1-c)^m\ep$$ for all $m$'s such that $$(1-c)^m \eta^{-m}\ep \leq \bar
\ep.$$ This implies that for all such $m$'s, the oscillation of
the function
$$\tilde w_\ep(x) = \dfrac{w(x) -x_n }{\ep}  \quad \text{in $B_2^+(w) \cup F(w)$}$$ in
$B_{\rho}(x_0), \rho=\eta^{m}$ is less than $(1-c)^m= \eta^{\gamma m} =
\rho^\gamma$. Thus, the following corollary holds.

\begin{cor} \label{corollary}Let $w$ be as in Theorem $\ref{HI}$  satisfying \eqref{osc} for $r=1$. Then  in $B_1(x_0)$, $\tilde
w_\ep$ has a H\"older modulus of continuity at $x_0$, outside
the ball of radius $\ep/\bar \ep,$ i.e for all $x \in B_1(x_0)$, with $|x-x_0| \geq \ep/\bar\ep$
$$|\tilde w_\ep(x) - \tilde w_\ep (x_0)| \leq C |x-x_0|^\gamma.
$$
\end{cor}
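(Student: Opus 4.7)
The plan is to iterate Theorem \ref{HI} starting from the flatness hypothesis at $x_0$ with $r=1$, and then convert the resulting geometric decay of oscillation on dyadic balls into a Hölder estimate for $\tilde w_\eps$ in the standard way. Concretely, fix a universal exponent $\gamma \in (0,1)$ by requiring $\eta^\gamma \ge 1-c$, i.e.\ $\gamma := \log(1-c)/\log\eta$ (and take $\gamma$ a bit smaller if this quantity exceeds $1$), where $c$ and $\eta$ are the universal constants from Theorem~\ref{HI}. Define sequences $a_m, b_m$ inductively by repeated application of Theorem \ref{HI}: as long as the Harnack hypothesis at scale $\eta^m$ is satisfied, we obtain
$$ (x_n+a_m)^+ \le w(x) \le (x_n+b_m)^+ \quad \text{in } B_{\eta^m}(x_0), \qquad b_m-a_m \le (1-c)^m \eps.$$

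Given $x \in B_1(x_0)$ with $|x-x_0| \ge \eps/\bar\eps$, choose $m\ge 0$ maximal with $\eta^m \ge |x-x_0|$. I would first verify that the iteration is justified up to step $m$: the hypothesis needed at step $k\le m$ reads $(1-c)^k \eps \le \bar\eps\, \eta^k$, equivalently (using $(1-c)^k \le \eta^{k\gamma}$) $\eps/\bar\eps \le \eta^{k(1-\gamma)}$. Since $\eta<1$ and $1-\gamma\in(0,1)$, we have $\eta^{k(1-\gamma)} \ge \eta^k \ge \eta^m \ge |x-x_0| \ge \eps/\bar\eps$ for all $k \le m$, so every invocation of Theorem~\ref{HI} along the chain is admissible.

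Next I would convert the flatness on $B_{\eta^m}(x_0)$ into an oscillation bound for $\tilde w_\eps$ on that ball. A short case analysis shows that for every $y \in B_{\eta^m}(x_0)\cap (B_2^+(w)\cup F(w))$ one has $\tilde w_\eps(y) \in [a_m/\eps,\, b_m/\eps]$: when $y_n> -a_m$ the sandwich gives $w(y)-y_n \in [a_m,b_m]$ directly; when $-b_m \le y_n \le -a_m$ the lower bound $w(y)-y_n \ge -y_n \ge a_m$ comes from $w\ge 0$ while the upper bound comes from the flatness; and when $y_n<-b_m$ one has $w(y)=0$ with $y$ interior to $\{w=0\}$, hence outside the domain of $\tilde w_\eps$. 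Both $x$ and $x_0$ lie in this set, so
$$ |\tilde w_\eps(x) - \tilde w_\eps(x_0)| \le (b_m-a_m)/\eps \le (1-c)^m \le \eta^{m\gamma}.$$
Finally, maximality of $m$ forces $\eta^{m+1} < |x-x_0|$, so $\eta^{m\gamma} \le \eta^{-\gamma} |x-x_0|^\gamma$, giving $|\tilde w_\eps(x)-\tilde w_\eps(x_0)| \le C|x-x_0|^\gamma$ with $C=\eta^{-\gamma}$ universal.

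The argument is essentially bookkeeping, and no new analytic ingredient is needed beyond Theorem~\ref{HI}. The one step that deserves care and that I expect to be the only real obstacle is the verification that the Harnack hypothesis \emph{persists} all the way down to the chosen scale $\eta^m$: the interplay between the contraction factor $(1-c)$ and the scale ratio $\eta$ has to be tuned through the exponent $\gamma$, and the cutoff $|x-x_0| \ge \eps/\bar\eps$ in the statement is precisely what makes the chain of Harnack iterations go through — outside this ball the gain in oscillation can compound, while inside it the iteration would stop.
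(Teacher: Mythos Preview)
Your proposal is correct and follows exactly the approach of the paper: iterate Theorem~\ref{HI} to obtain $b_m - a_m \le (1-c)^m \eps$ on $B_{\eta^m}(x_0)$ for all $m$ with $(1-c)^m \eta^{-m}\eps \le \bar\eps$, set $\gamma$ so that $(1-c)^m = \eta^{\gamma m}$, and read off the H\"older bound for $\tilde w_\eps$. Your write-up simply supplies the routine details (the choice of $m$ given $x$, the verification that the cutoff $|x-x_0|\ge \eps/\bar\eps$ makes every step of the iteration admissible, and the case analysis on the sign of $y_n+a_m$, $y_n+b_m$) that the paper leaves implicit.
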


The proof of the Harnack inequality relies on the following lemma.

\begin{lem}\label{main} Let $w$ be a viscosity solution to \eqref{general} in $B_1$ which satisfies \begin{equation*}  (x_n + 2\eps)^+ \geq w(x) \geq x_n^+, \quad \text{in $B_1$}.\end{equation*} There exist
universal constants $\bar \ep, \eta>0$ such that if at $\bar x=\dfrac{1}{5}e_n$ \be\label{u-p>ep2}
w(\bar x) \geq (\bar x_n + \ep)^+, \quad \eps \leq \bar \eps \ee then \be w(x) \geq
(x_n+c\eps)^+, \quad \text{in $\overline{B}_{\eta},$}\ee for some
$0<c<1$ universal. Analogously, if $$(x_n - 2\eps)^+ \leq w(x) \leq x_n^+, \quad \text{in $B_1$}$$ and $$ w(\bar x) \leq (\bar x_n - \eps)^+,$$ then $$ w(x) \leq (x_n - c \ep)^+, \quad
\text{in $\overline{B}_{\eta}.$}$$
\end{lem}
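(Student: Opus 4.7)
\medskip

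\noindent\textbf{Plan.} Following the strategy of \cite{D}, I would first improve the pointwise inequality $w \ge x_n^+$ to a gain of order $\eps$ in a small interior ball around $\bar x$ where $w$ is uniformly bounded away from the free boundary (so that the equation is classical and uniformly elliptic). I would then transport this gain to a neighborhood of the origin by comparing $w$ with a carefully chosen subsolution barrier in an annular region around $\bar x$. The symmetric (supersolution) statement is handled by a dual construction.

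\textbf{Step 1 (Interior Harnack).} In $B_{1/10}(\bar x)$ we have $x_n \ge 1/10$, hence $w \ge 1/10$, and by Theorem \ref{lipth} and the ensuing remark $w$ is a classical $C^{2,\alpha}$ solution of $\triangle w = h(\nabla w)/w$ there. Setting $u := w - x_n \ge 0$ and using $h(e_n) = 0$ together with $h \in C^1$, the mean value theorem gives $h(\nabla w) = b(x) \cdot \nabla u$ with
$$b(x) := \int_0^1 \nabla h\bigl(e_n + t(\nabla w(x) - e_n)\bigr)\,dt,$$
and $|b|$ universally bounded because $\nabla w$ lies in a compact set by the Lipschitz bound from Theorem \ref{LL}. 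Thus $u$ satisfies the linear uniformly elliptic equation $\triangle u - (b/w)\cdot \nabla u = 0$ with universally bounded coefficients, and $u(\bar x) \ge \eps$ together with the classical Harnack inequality yields $u \ge c_0 \eps$ in $B_{1/20}(\bar x)$ for a universal $c_0 \in (0,1)$.

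\textbf{Step 2 (Subsolution barrier).} Choose a large universal exponent $\gamma > n-2$ and consider the radial function
$$\phi(x) = \frac{|x - \bar x|^{-\gamma} - R^{-\gamma}}{20^{\gamma} - R^{-\gamma}}$$
in the annulus $A := B_R(\bar x) \setminus \overline{B_{1/20}(\bar x)}$, with $R$ chosen so that $\overline{B_R(\bar x)} \subset B_1$ and $B_\eta(0) \subset A$ for some universal $\eta > 0$ (both $R$ and $\eta$ allowed to depend on $\delta$). By construction $\phi = 1$ on $\partial B_{1/20}(\bar x)$, $\phi = 0$ on $\partial B_R(\bar x)$, $\phi \ge \mu > 0$ on $B_\eta$ for some universal $\mu$, and $\triangle \phi / |\nabla \phi| = (\gamma + 2 - n)/|x - \bar x|$ is uniformly large in $A$ for $\gamma$ large. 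Set $v(x) := x_n + c_0 \eps \phi(x)$. Then $\triangle v = c_0 \eps \triangle \phi > 0$, and I claim that $\nabla v = e_n + c_0 \eps \nabla \phi \notin \bar D$ in the thin slab near the free boundary of $v^+$: indeed, $\nabla \phi$ points from $x$ toward $\bar x$ and, by the star-shape assumption \eqref{star}, has strictly positive component along the outer normal $\omega_0$ to $\partial D$ at $e_n$ (provided $R$ is chosen sufficiently small in terms of $\delta$). In the complementary region of $A$ where $v$ is bounded below by a universal positive constant, the estimate $|h(\nabla v)/v| \le C c_0 \eps |\nabla \phi|$ is dominated by $c_0 \eps \triangle \phi$ because of the favorable ratio $\triangle \phi / |\nabla \phi|$. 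By the implication \eqref{Phi}, $v^+$ is then a strict viscosity subsolution of \eqref{general} in $A$.

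\textbf{Step 3 (Comparison and conclusion).} On $\partial B_R(\bar x)$, $\phi=0$ gives $v^+ = x_n^+ \le w$; on $\partial B_{1/20}(\bar x)$, $v = x_n + c_0 \eps \le w$ by Step 1. The comparison principle for viscosity solutions therefore gives $v^+ \le w$ in $A$, so that in $B_\eta$
$$w(x) \ge v^+(x) \ge (x_n + c\eps)^+, \qquad c := c_0 \mu,$$
which is the desired inequality. The second statement is proved symmetrically, replacing the subsolution barrier $v^+$ by the supersolution $V^+$ with $V := x_n - c_0 \eps \phi$ and invoking the second implication in \eqref{Phi}. The main obstacle is Step 2: verifying $\nabla v \notin \bar D$ throughout the region of $A$ near the free boundary of $v^+$ requires a delicate interplay of the star-shape constant $\delta$, the outer radius $R$, and the exponent $\gamma$, and is the step where the geometric hypothesis on $D$ is essentially used.
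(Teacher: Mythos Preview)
Your overall plan (interior Harnack followed by a barrier comparison) is exactly the strategy of the paper, and your Step 1 is essentially equivalent to the paper's first step. The divergence is in Step 2, and here there is a genuine gap.

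The radial barrier from \cite{D} works in the classical one-phase problem because there $D$ is the unit ball and the outer normal to $\partial D$ at $e_n$ is $\omega=e_n$; then $\nabla\phi$, which points toward $\bar x=\frac15 e_n$, automatically has positive $e_n$-component near $\{x_n=0\}$. In the present setting $D$ is merely star-shaped, and $\omega$ may be heavily tilted: \eqref{star} only gives $\omega_n\ge\delta$, so $|\omega'|$ can be close to $1$. At a point $x=(x',0)$ on (or near) $F(v^+)$ one finds $\omega\cdot\nabla\phi \propto -\omega'\cdot x'+\omega_n/5$, which becomes negative as soon as $|x'|>\delta/5$. Since the annulus must contain the origin you are forced to take $R>1/5$, and then points of $\{x_n\approx 0\}\cap A$ with $|x'|$ up to $\sqrt{R^2-1/25}$ occur. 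Your parenthetical ``$R$ sufficiently small in terms of $\delta$'' cannot be taken literally; at best one squeezes $R$ into the narrow window $(1/5,\sqrt{1+\delta^2}/5)$, but then $\phi(0)$ becomes tiny, the threshold between your two regions must be pushed down to $O(\delta^2)$, and $\gamma$ must be sent to $O(\delta^{-2})$ to keep $\triangle\phi/|\nabla\phi|$ large enough --- a chain of dependencies you have not carried out and which you yourself flag as ``the main obstacle''.

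The paper sidesteps this entirely by replacing the radial barrier with a quadratic adapted to $\omega$,
\[
Q(x)=-\Bigl|x'-\tfrac{\omega'}{\omega_n}\,x_n\Bigr|^2 + A\,x_n^2 + x_n, \qquad \Psi_t = x_n + c\eps\,(Q+t),
\]
and works in a thin slab $B'_{1/2}\times\{-2\eps\le x_n\le l\}$ rather than an annulus. The skewing in the first term makes the tangential gradient of $Q$ orthogonal to $\omega$, so that $\omega\cdot\nabla Q = 2A\omega_n x_n+\omega_n>0$ throughout the slab by a one-line computation; hence $\nabla\Psi_t\notin D$ everywhere, and no region-splitting is needed. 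This adaptation of the barrier to the anisotropic free boundary condition $\nabla w\in\Gamma$ is the one genuinely new ingredient relative to \cite{D}, and it is precisely what your proposal is missing.
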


\begin{proof} We prove the first statement. The second one follows from a similar argument.

First set 
$$\tilde w:= \frac{w-x_n}{\eps} \quad  \quad \mbox{defined only in $B_2^+(w) \cup F(w)$,}$$ and 
$$\mathcal C_l:= B'_{3/4} \times \{\frac l 2 < x_n < \frac 1 2\} \subset B_1^+(w),$$
with $l$ small, universal, to be made precise later. Using that $h(e_n)=0$ and $w$ is bounded below in $\mathcal C_l$, we have
$$|\Delta \tilde w| = \frac 1 \eps|\Delta w| = \frac{1}{\eps w} |h(e_n + \eps \nabla \tilde w)|\leq C(l) |\nabla \tilde w| \quad \text{in $\mathcal C_l \cap \{|\nabla \tilde w| \le c \eps^{-1}\}$}.$$ 
This means that a sufficiently large dilation of $\tilde w$ satisfies the hypotheses of Lemma \ref{delta1} and we conclude that $|\nabla \tilde w| \le C(l)$ in the interior of $\mathcal C_l$. 
Since $$|\triangle \tilde w| \le C(l) |\nabla \tilde w| \quad\mbox{ and} \quad \tilde w \ge 0, \quad \tilde w (\bar x) \geq 1,$$ we can apply Harnack inequality and obtain
$$\tilde w \geq c(l) \quad \text{in $T_l:=B'_{1/2} \times \{x_n=l\}$},$$
that is
\be \label{top}
w \geq x_n + \eps c(l) \quad \text{on $T_l$.}
\ee

Now, let $\omega$ be the unit normal to $\Gamma$ at $e_n$ pointing towards $\R^n \setminus \bar D$, which in view of \eqref{star} satisfies $\omega_n \geq \delta.$ Set
$$Q(x):= -|x'- \frac{\omega'}{\omega_n}x_n|^2 + Ax_n^2 + x_n,$$
with $A > (n-1) + {\delta}^{-2}$ universal and define ($c=c(l)$,)
$$\Psi_t:= x_n+\eps c(Q+t), \quad t \in \R.$$
Then for $t= \underline t <0$ depending on $\delta$, $$\Psi_{\underline t} < x_n \leq w,$$ on the region $\mathcal C_{\eps}:= \bar B'_{1/2} \times \{-2\eps \leq x_n \leq l\}.$ Let $\bar t$ be the largest $t$ such that 
$$\Psi_{\bar t} \leq w \quad \text{on $\mathcal C_{\eps}$},$$ and let $\tilde x \in \mathcal C_{\eps}$ such that 
$$\Psi_{\bar t}(\tilde x)= w(\tilde x).$$ 
We show that $\bar t \geq \frac{1}{8}$. Indeed if $\bar t < \frac{1}{8}$, then for $\eps, l$ small universal, we can guarantee that 
$$\Psi_{\bar t}<0 \leq w \quad \text{on $ B'_{1/2} \times \{x_n=-2\eps\}$}, \quad \Psi_{\bar t} < x_n + \eps c \leq w, \quad \text{on $T_l$},$$
and
$$\Psi_{\bar t} < x_n \leq w, \quad \text{on $\{|x'|=1/2\} \times \{-2\eps \leq x_n \leq l\}.$}$$
We conclude that $\tilde x \in \mathcal C_{\eps}^{+}(\Psi_{\bar t}) \cup F(\Psi_{\bar t}).$
On the other hand, we argue that $\Psi_{\bar t}$ is a strict subsolution to the interior equation, and $w$ satisfies the free boundary condition, hence no touching can occur in $\mathcal C_{\eps}^{+}(\Psi_{\bar t}) \cup F(\Psi_{\bar t}),$ as long as $\nabla \Psi_{\bar t} \not \in D.$ This leads to a contradiction.

To show our claim for $\Psi_{\bar t}^+$  we check that for $\eps$ small, 
$$\Delta \Psi_{\bar t}= \eps c \Delta Q >0,$$ which follows by our choice of $A.$
We are left to prove that $\nabla \Psi_{\bar t} \not \in D$. Since 
$$\nabla \Psi_{\bar t} = e_n + \eps c \nabla Q,$$and $\omega$ is perpendicular to $\Gamma$ at $e_n,$
it is enough to show that
$$\omega \cdot \nabla Q >0.$$
A quick computation gives that for $\eps$ small,
$$\omega \cdot \nabla Q = 2A \omega_n x_n +  \omega_n >0 \quad \text{in $\mathcal C_\eps$}.$$

Thus,
$$w \geq x_n + \eps c (Q + \frac 1 8), \quad \text{on $\mathcal C_\eps$,}$$
and for $\eta$ small universal,
$$Q \geq -\frac{1}{16}, \quad \text{on $B_\eta.$}$$
This concludes our proof.

\end{proof}

We can now prove our Theorem \ref{HI}.

\smallskip

{\it Proof of Theorem \ref{HI}.} Without loss of generality, we can assume that $x_0=0, r=1$. First notice, that for $\ep$ small, if $a_0 <-1/5$ then $B_{1/10}(0)$ belongs to the zero phase of $w$, and the conclusion is trivial. Thus we only need to  distinguish two cases.

If $a_0 > 1/5$, then $B_{1/5} \subset \{w>0\}$ and 
$$0 \leq v:= \frac{w - (x_n+a_0)}{\eps} \leq 1$$
satisfies (see proof of Lemma \ref{main})
$$|\Delta v| \leq C |\nabla v| \quad \text{in $B_{1/5}$}.$$
Therefore, the claim is deduced from the standard Harnack inequality for $v.$

If $|a_0| <1/5$, we set
$$v(x):= w(x-a_0 e_n), \quad x \in B_{4/5}.$$ Then, $v$ satisfies the assumptions of Lemma \ref{main}, and the desired conclusion follows.

\qed

\section{Improvement of Flatness}

In this section we prove our main Improvement of Flatness Proposition, from which Theorem \ref{flat_thm} follows by standard 
arguments. 
The universal constants in this section depend on the dimension $n$, the $C^1$ norm of $\p D$, the constant $\delta$ in
\eqref{fnu},\eqref{star}, and the $C^1$ norm of $h$ in a neighborhood of $\p D$.

\begin{prop}Let $w$ be a viscosity solution to \eqref{general} in $B_1$.  There exist $\eps_0, r>0$ universal, such that if  $w$ is $\eps$-flat, i.e.
\be\label{flat1}
(f(e_n) x_n-\eps)^+ \leq w(x) \leq (f(e_n)x_n+\eps)^+, \quad \text{in $B_1$}, \quad \eps \leq \eps_0\ee
with $0 \in F(w)$, then  \be\label{flat2}
(f(\nu) x \cdot \nu- \frac \eps 2 r)^+ \leq w(x) \leq (f(\nu) x\cdot \nu + \frac \eps 2 r)^+ \quad \text{in $B_r,$}
\ee with $|\nu|=1$, and $ |f(\nu) \nu-f(e_n)e_n| \le C \eps,$ for $C>0$ universal. \end{prop}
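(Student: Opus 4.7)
The proof I envision is a compactness/linearization argument in the spirit of \cite{D}. Fix a small universal $r$ to be chosen at the end, and argue by contradiction: suppose there exist $\eps_k \to 0$ and solutions $w_k$ to \eqref{general} in $B_1$ satisfying \eqref{flat1} with $\eps = \eps_k$ and $0 \in F(w_k)$, but such that \eqref{flat2} fails at scale $r$ for every admissible unit vector $\nu$. Consider the normalized deviations
\[
\tilde w_k(x) := \frac{w_k(x) - f(e_n)\, x_n}{\eps_k}, \qquad x \in \overline{B_1^+(w_k) \cup F(w_k)}.
\]
The flatness gives $|\tilde w_k| \leq 1$, and iterating Harnack's inequality (Corollary \ref{corollary}) on rescalings of $w_k$ yields a uniform $C^{0,\gamma}$ modulus of continuity for $\tilde w_k$ on compact subsets that stay at a distance $\gtrsim \eps_k/\bar\eps$ from $\{x_n = 0\}$. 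Passing to a subsequence, the graphs of $\tilde w_k$ converge in Hausdorff distance to the graph of a Hölder continuous function $\tilde w$ on $B_{1/2} \cap \{x_n \geq 0\}$ with $|\tilde w| \leq 1$ and $\tilde w(0) = 0$ (the last equality because $0 \in F(w_k)$).

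Next I would show that $\tilde w$ is a viscosity solution of the linearized problem
\[
\Delta \tilde w - \frac{\nabla h(f(e_n) e_n)}{f(e_n)\, x_n} \cdot \nabla \tilde w = 0 \quad \text{in } B_{1/2} \cap \{x_n > 0\},
\]
\[
\omega \cdot \nabla \tilde w = 0 \quad \text{on } B_{1/2} \cap \{x_n = 0\},
\]
where $\omega$ denotes the outward unit normal to $\Gamma$ at $f(e_n) e_n$. The interior equation is the formal Taylor expansion of $\Delta w = h(\nabla w)/w$ around $w_0(x) = f(e_n) x_n$, using $h(f(e_n) e_n) = 0$. The boundary condition encodes that, to first order in $\eps_k$, the requirement $\nabla w_k \in \Gamma$ on $F(w_k)$ forces the correction $\nabla \tilde w$ at $\{x_n = 0\}$ to be tangent to $\Gamma$ at $f(e_n) e_n$. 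Both statements are verified by the standard viscosity test: if a smooth $P$ touches $\tilde w$ from one side at a point of its domain, a translated and perturbed copy of $P$ furnishes a test function contradicting the corresponding super/subsolution property of $w_k$ for $k$ large. At the boundary, the test functions are built from paraboloids of the type used in the proof of Lemma \ref{main}, together with the strict sub/supersolution production rule \eqref{Phi}.

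Finally, the $C^{1,\alpha}$ theory for the above linear problem, deferred to Section 7, produces universal $C_0, \alpha > 0$ and a linear function $\ell(x) = A \cdot x' + B x_n$ with $|A| + |B| \leq C_0$ and $\omega \cdot (A, B) = 0$, such that
\[
|\tilde w(x) - \ell(x)| \leq C_0\, r^{1+\alpha} \quad \text{in } B_r \cap \{x_n \geq 0\}.
\]
Because $(A,B)$ is tangent to $\Gamma$ at $f(e_n) e_n$, the $C^1$ regularity of $\Gamma$ produces a unit vector $\nu_k$ with $f(\nu_k)\nu_k = f(e_n)e_n + \eps_k (A, B) + O(\eps_k^2)$ and $|f(\nu_k)\nu_k - f(e_n)e_n| \leq C \eps_k$; inserting this into the expansion above and using uniform convergence yields
\[
f(e_n)\, x_n + \eps_k \ell(x) = f(\nu_k)\, x \cdot \nu_k + O(\eps_k^2 + \eps_k\, r^{1+\alpha}),
\]
so that \eqref{flat2} holds at scale $r$ with flatness $\eps_k/2$ provided $r$ is chosen so small that $2 C_0 r^\alpha \leq 1/2$ and $k$ is sufficiently large. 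This contradicts the contradiction hypothesis and concludes the proof.

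The principal obstacle I anticipate is the boundary step of the identification of the limit: transferring the viscosity free boundary condition of $w_k$ to the Neumann-type condition $\omega \cdot \nabla \tilde w = 0$ for $\tilde w$. The requisite barriers must faithfully reflect the fact that $\omega \ne e_n$ in general, so that the shift of the free boundary and the first-order correction to the gradient couple through the tangent geometry of $\Gamma$ at $f(e_n) e_n$. A second, independent difficulty is the regularity of the degenerate linear operator, whose drift term is singular at $x_n = 0$; this is what Section 7 is designed to resolve.
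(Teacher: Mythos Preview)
Your proposal is correct and follows essentially the same compactness/linearization scheme as the paper. Two refinements the paper makes that you should be aware of: (i) to obtain constants $\eps_0,r$ that are \emph{universal} (depending only on the structural bounds for $h,D$), the paper lets the data $(D_k,h_k,f_k)$ vary along the contradicting sequence and extracts a limit $(D^*,h^*,f^*)$, rather than keeping $h,D$ fixed; (ii) the linearized boundary condition is not the classical Neumann condition $\omega\cdot\nabla\tilde w=0$ but the weighted one $\lim_{t\to 0}t^{-(1-s)}\bigl(\tilde w(x_0+t\omega)-\tilde w(x_0)\bigr)=0$ with $s=-\partial_n h^*(e_n)$, and is tested in the viscosity sense via comparison functions of the form $A|x'-\tfrac{\omega'}{\omega_n}x_n-y_0'|^2+B+p\,x_n^{1-s}$. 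Since you defer the linear analysis to Section~7 and only use its output $a\cdot\omega=0$, this imprecision does not affect your argument, but it is exactly the ``principal obstacle'' you anticipated.
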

\begin{proof} Without loss of generality, we assume that $f(e_n) =1.$ 

Let $r$ be fixed small (to be made precise later.) Assume by contradiction that there exist a sequence $\eps_k \to 0$ and a sequence of domains $D_k$ (and corresponding $f_k$), functions $h_k$ (satisfying the same assumptions as $f, h$ with the same bounds) and solutions $w_k$ satisfying \eqref{flat1} but not the conclusion \eqref{flat2}. Since $h_k, D_k, f_k$ have a uniformly bounded $C^1$ norm, and $\nabla h_k, \nabla f_k$ have a uniformly bounded modulus of continuity, up to extracting a subsequence,
$$h_k \to h^*, \quad D_k \to D^*, \quad f_k \to f^*$$ 
uniformly on compacts, with $h^*$ defined only in a neighborhood of $\p D^*$. The limits are also $C^1$ with 
$$\nabla h_k \to \nabla h^*, \quad \nabla f_k \to \nabla f^*$$ uniformly on compacts.
\smallskip

{\it Step 1.} Let
$$\tilde w_k :=\frac{w_k-x_n}{\eps_k} \quad \text{in $\Omega_k:= B_1^+(w_k) \cup F(u_k)$.}$$
Then, by \eqref{flat1}
\be\label{-11}-1 \leq \tilde w_k \leq 1, \quad \text{in $\Omega_k$,}\ee
and moreover $F(w_k)$ converges to $B_1 \cap \{x_n=0\}$ in the Hausdorff distance.

By Corollary \ref{corollary},  and Ascoli-Arzela, it follows that as $\eps_k \to 0$, the graphs of the $\tilde w_k$'s over $B_{1/2} \cap \Omega_k$ converge (up to a subsequence) in the Hausdorff distance to the graph of a H\"older continuous function $\tilde w$ on $B_{1/2}^+$. 

\smallskip

{\it Step 2.} We wish to show that $\tilde w$ is a viscosity solution to the linearized problem  \be\begin{cases}\label{linearized_imp}
\Delta \tilde w + v \cdot \dfrac{{\nabla \tilde w }}{x_n}=0, \quad \text{in $B_{1/2}^+,$}\\
\tilde w_\omega =0,\quad \text{on $B_{1/2} \cap \{x_n=0\},$}
\end{cases}\ee
where
$$v:= -\nabla h^*(e_n)= |\nabla h^*(e_n)| \omega,$$
and $\omega$ is the outer unit normal to $D^*$ at $e_n$. For the precise definition of viscosity solution to \eqref{linearized_imp} we refer to Section 7, where the problem above is analyzed and the necessary properties which will be used later on in this proof, are established.

Since $\tilde w_k$ satisfies
$$ \Delta \tilde w_k = \frac{1}{\eps_k} \, \frac{h_k(e_n + \eps_k \nabla \tilde w_k)-h_k(e_n)}{x_n + \eps_k \tilde w_k} \quad  \quad \text{in $\Omega_k$}$$ and $\nabla h_k \to \nabla h^*$,
Proposition 2.9 in \cite{CC} implies that $\tilde w$ satisfies the equation in the interior.

We only need to verify the free boundary condition. Following the notation in Subsection \ref{lin_sub} we set
$$s= v_n$$
and notice that in our case 
\be\label{so}
C \ge s \geq 0 \quad \quad \omega_n \ge \delta.
\ee 
In view of \eqref{-11}, the case $s \geq 1$ is trivial. Consider the case $s< 1$ and assume by contradiction that 
there exists a test function $$A |x'-\omega' \frac{x_n}{\omega_n} - \bar x'|^2+  B  + px_n^{1-s}, \quad A,B \in \R, \bar x' \in \R^{n-1}$$ with $$p < 0,$$ 
which touches $\tilde w$ by above at  $\bar x \in \{x_n=0\}.$ 
Notice that since $s \geq 0$ we can replace the test function above with
$$\phi:=A|x'-\omega' \frac{x_n}{\omega_n} - \bar x'|^2+  B  - C(A) x_n^2+ \frac p 2 x_n$$
which still touches $\tilde w$ strictly by above at $\bar x$ (in a small neighborhood) and has the property that (for $C(A)$ appropriately chosen,)
$$\Delta \phi <0.$$

Then, the convergence of the $\tilde w_k$'s to $\tilde w$ implies that there exist points in $B_{1/2} \cap \Omega_k$ with $x_k \to \bar x$ and constants $c_k \to 0$ such that 
$$\phi(x_k) + c_k = \tilde w_k(x_k)$$
and ($\mathcal N$ a small neighborhood of $x_k$)
$$\tilde w_k < \phi + c_k \quad \text{in $\mathcal N \setminus \{x_k\}.$}$$
Equivalently,
$$w_k (x_k)= (x_k)_n + \eps_k \phi_k(x_k)$$
and 
$$w_k < x_n + \eps_k(\phi + c_k) \quad \text{in $\mathcal N \setminus \{x_k\}.$}$$
Call
$$\Phi_k:=  x_n + \eps_k(\phi + c_k).$$ In order to reach a contradiction it suffices to show that $\Phi^+$ is a strict supersolution to our problem. 
Indeed (see \eqref{Phi}),
$$\Delta \Phi_{k} = \eps_k \Delta \phi <0,$$
and it remains to prove that
\be\label{outside2}\nabla \Phi_k (x) \in D_k, \quad \text{for $x$ near $x_k$.}\ee
Notice that
$$\nabla \Phi_k = e_n + \eps_k \nabla \phi,$$ and using the convergence of $D_k$ to $D^*$ it suffices to check that
$$\omega \cdot \nabla \phi <0$$ 
in a neighborhood of $\bar x$. It is easily verified that
$$\omega \cdot \nabla \psi = - 2 C(A) x_n \omega_n + \frac p 2 \omega_n,$$
and the conclusion follows since $p<0$, $\omega_n>0$.

\smallskip

{\it Step 3.} The limit function $\tilde w$ solves \eqref{linearized_imp} and $\tilde w (0)=0$ since $0 \in F(w_k)$. 
According to Theorem \ref{classical} and recalling \eqref{so} 
we find that $\tilde w$ satisfies the pointwise $C^{1,\mu}$ estimate \eqref{estimate} with universal constants. 
Thus, by the convergence of the $\tilde w_k$, we conclude that 
$$|\tilde w_k(x) - a \cdot x| \leq  C_1 r^{1+\mu}, \quad \text{in $B_{r} \cap \Omega_k$,}$$ with 
$$ |a| \leq C_0, \quad a \cdot \omega=0,$$ 
with $C_0$, $C_1$, $\mu$ universal. Hence for $r$ small enough universal, 
\be\label{tnu}x_n +\eps_k a \cdot x -\eps_k \frac r 4 \leq w_k(x) \leq x_n + \eps_k a \cdot x + \eps_k \frac r 4,  \quad \text{in $B_{r} \cap \Omega_k$.}\ee
Since 
$$a \cdot \omega=0, \quad |a| \leq C_0, \quad \Gamma^* \in C^1, \quad \Gamma_k \to \Gamma^*$$
we can write
$$e_n+ \eps_k a= \sigma_k \nu_k, \quad |\nu_k|=1,$$
with
$$|\sigma_k -f_k(\nu_k)| \leq \eps_k \frac r 4, \quad \quad f_k(\nu_k) \nu_k \in \Gamma_k$$ as long as $\eps$ is small enough. Thus, \eqref{tnu} gives
$$(f_k(\nu_k) x \cdot \nu_k-\eps_k r/2)^+ \leq w_k(x) \leq (f_k(\nu_k) x\cdot \nu_k + r \eps_k/2)^+ \quad \text{in $B_r,$}$$
and we reach a contradiction.

\end{proof}

\section{The Linearized Problem}

In this section we study the linearized problem associated to the free boundary problem \eqref{general}. This is a Neumann type problem in the upper half ball, governed by the the degenerate equation:
$$\Delta \varphi + v \cdot \frac{\nabla \varphi}{x_n}=0,$$ for some constant vector $v \in \R^n.$ We develop the viscosity theory for such problem. 

We use the following notation:
$$B_r^{+} : =B_r \cap \{x_n \geq 0\},$$
and $$B'_{r}=B_r \cap \{x_n=0\}$$  denotes a ball in $\R^{n-1}$. Points in $\R^n$ are sometimes denoted by $x=(x',x_n)$, with $x' \in \R^{n-1}.$
\subsection{The normalized linear problem.} After an affine deformation,  we reduce to the case when $v$ is parallel to $e_n$, and the operator is given by a general constant coefficients linear operator. 

Let $A=(a_{ij})_{i,j}$ be uniformly elliptic with ellipticity constants $0< \lambda \leq \Lambda$, $a_{nn}=1,$ and let $s>-1$.

\begin{defn}  We say that $\varphi$ is a viscosity subsolution in $B_1^+$ to \be\begin{cases}\label{linearized}
\mathcal L_s \varphi:=\sum_{ij}a_{ij} \varphi_{ij} + s\dfrac{\varphi_n}{x_n}=0, \quad \text{in $B_1 \cap \{x_n>0\},$}\\
\ \\
\varphi_s:=\lim_{t \to 0} \dfrac{\varphi(x'_0, t)-\varphi(x'_0, 0)}{t^{1-s}}=0 \quad \text{on $B'_1,$}
\end{cases}\ee if it is continuous in $B_1^+$, $\mathcal L_s \varphi \geq 0$ in $B_1 \cap \{x_n>0\}$ in the viscosity sense, and $\varphi$ satisfies the boundary condition in the following sense:
\begin{enumerate}
\item if $s \geq 1$, then $\varphi$ is uniformly bounded in $B_1^+;$
\item if $s <1$, then $\varphi$ is continuous in $B_1^+$ and it cannot be touched by above at a point $x_0 \in B'_1$ by a test function $$\phi:= A|x'-y'_0|^2+  B + px_n^{1-s}, \quad A,B \in \R, y'_0 \in \R^{n-1},$$ with $$p < 0.$$
\end{enumerate}
\end{defn}

Similarly we can define the notion of viscosity supersolution and viscosity solution to \eqref{linearized}.

 The main result in this section is the following theorem. From now on, 
 $$\delta^{-1} \ge s \ge -1+ \delta,$$
 and universal constants depend on $n, \delta, \lambda, \Lambda.$

\begin{thm} \label{classical_1}Let $\varphi$ be a viscosity solution to \eqref{linearized} with $|\varphi| \leq 1$ in $B_1^+.$ Then $\varphi \in C^{1,\mu}(B_{1/2}^+),$ with a universal bound on the $C^{1,\mu}$ norm. In particular, $\varphi$ satisfies for any  $x_0 \in B'_{1/2}$, 
\be\label{estimate1}|\varphi(x) -\varphi(x_0) - a'\cdot (x'-x'_0) | \leq C|x-x_0|^{1+\mu},\quad \quad |a'| \leq C,
\ee
for $C>0$, $0<\mu <1$ universal, and a vector $a'\in \R^{n-1}$ depending on $x_0$. \end{thm}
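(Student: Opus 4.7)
The plan is the standard compactness/Campanato iteration for pointwise boundary regularity, adapted to the degenerate Neumann problem. I would organize the argument in three steps.

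Step 1 (boundary H\"older regularity). The operator $\mathcal{L}_s$ on radial profiles $u(x_n)$ reduces to $u'' + s u'/x_n = 0$, with basis $\{1, x_n^{1-s}\}$ when $s \neq 1$. Using these together with tangential quadratics I would construct explicit sub- and super-barriers which, combined with the interior Harnack inequality for $\mathcal{L}_s$ (valid at scale $r$ in the strip $\{x_n > r\}$ since the zero-order coefficient is then bounded), yield an oscillation decay $\mathrm{osc}_{B_{r}^{+}}\varphi \leq (1-c)\, \mathrm{osc}_{B_{1}^{+}}\varphi$ and hence universal boundary H\"older continuity $\varphi \in C^{0,\alpha}(\ov{B^{+}_{1/2}})$. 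The viscosity boundary condition is used precisely here: the forbidden test function $p\, x_n^{1-s}$ with $p < 0$ (case $s < 1$), or mere boundedness (case $s \geq 1$), rules out $x_n^{1-s}$-barrier crossings from the wrong side.

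Step 2 (Liouville classification). I would show that any global viscosity solution $\psi$ of $\mathcal{L}_s\psi=0$ in $\{x_n \geq 0\}$ with $\psi_s=0$ satisfying the growth bound $|\psi(x)| \leq C(1+|x|^{1+\mu})$ for some small $\mu > 0$ must be tangentially affine, $\psi(x) = a'\cdot x' + c$ with $|a'|\leq C$. The argument exploits translation invariance in $x'$: discrete tangential increments $\psi(x+h'e_i)-\psi(x)$ are again solutions of the same problem, and applying Step 1 on $B_R^+$ with $R\to\infty$ forces them to grow no faster than $|h'|$ uniformly in $x$. Iterating once more on second tangential differences shows these vanish, so $\psi$ splits as an affine function of $x'$ plus a function of $x_n$ alone; the latter solves the ODE above and is killed by $\psi_s = 0$ together with the sublinear growth.

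Step 3 (improvement of flatness by compactness). If the $C^{1,\mu}$ estimate at $x_0 \in B'_{1/2}$ fails, there is a sequence $\varphi_k$ and radii $r_k \to 0$ such that the best tangential-affine approximation at $x_0$ has residual exceeding $k\, r_k^{1+\mu}$ on $B_{r_k}^+$. Normalizing the residuals and rescaling by $r_k$ produces solutions $\tilde\varphi_k$ of $\mathcal{L}_s \tilde\varphi_k=0$ on larger and larger half-balls, with residuals bounded by $C(1+|x|^{1+\mu})$ and not flattening at the origin. Step 1 provides uniform $C^{0,\alpha/2}_{\mathrm{loc}}$ bounds, and standard viscosity-solution stability lets me extract a limit $\psi$ satisfying the hypotheses of Step 2 (the interior equation is stable by Proposition 2.9 of \cite{CC}-type results, and the boundary test functions pass to the limit since $p<0$ is an open condition). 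The resulting tangentially affine $\psi$ contradicts the non-flattening normalization, proving \eqref{estimate1}.

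The main obstacle is Step 2: controlling discrete tangential increments up to the boundary using only the pointwise H\"older estimate from Step 1 and the degenerate boundary condition. A secondary technical point is the stability of the viscosity boundary condition under rescaling and uniform convergence in Step 3, which is immediate in the case $s \geq 1$ but in the case $s < 1$ requires checking that the admissible test functions of the form $A|x' - y'_0|^2 + B + p x_n^{1-s}$ with strict sign $p < 0$ behave well under limits; the strict inequality $p<0$ being open makes this routine.
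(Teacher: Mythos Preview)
Your overall strategy (boundary H\"older $\Rightarrow$ Liouville $\Rightarrow$ compactness blow-up) is sound, but it takes a genuinely different route from the paper, and you skip over the one step that carries most of the technical weight.

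\medskip

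\textbf{Comparison with the paper.} The paper never blows up or proves a Liouville theorem. After the boundary H\"older estimate (your Step~1, which matches the paper's Lemma~\ref{standard}/Theorem~\ref{hold}), the paper proves that the sum of two viscosity subsolutions of \eqref{linearized} is again a subsolution (Proposition~\ref{diff}), hence tangential difference quotients of a solution are solutions (Corollary~\ref{derivative}). Iterating the H\"older estimate locally on these quotients gives $C^{k,\mu}$ regularity in $x'$ for every $k$ (Theorem~\ref{higher}). With full tangential smoothness in hand, the paper rewrites \eqref{linearized} as an ODE in $x_n$,
\[
\varphi_{nn}+s\,\varphi_n/x_n=g(x)+h(x),
\]
with $g$ bounded (tangential second derivatives) and $|h|\le Cx_n^{\alpha-1}$ (mixed derivatives via interior estimates and Theorem~\ref{hold}), solves it explicitly, and reads off \eqref{estimate1} after the boundary condition kills the $x_n^{1-s}$ mode. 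This is purely local and yields arbitrary tangential smoothness, not just $C^{1,\mu}$.

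\medskip

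\textbf{The gap in your proposal.} In Step~2 you assert that tangential increments $\psi(\cdot+h')-\psi(\cdot)$ are ``again solutions of the same problem.'' For the interior equation this is clear, but for the degenerate Neumann condition it is \emph{not}: the viscosity boundary condition is phrased via one-sided test functions, and closure under sums does not follow from linearity alone. This is exactly the content of the paper's Proposition~\ref{diff}, whose proof requires tangential sup-convolutions together with an expansion lemma at regular boundary points (Lemma~\ref{expansion}). You will need an argument of this type regardless of whether you run it locally (as the paper does) or globally inside your Liouville theorem. A secondary point: the Liouville argument needs $k$ iterations of the difference-quotient/H\"older scheme with $k\alpha>1+\mu$, not just ``once more on second differences,'' since $\alpha$ from Step~1 may be small.

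\medskip

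Once Proposition~\ref{diff} is available, your Steps~2--3 go through; the stability of the boundary condition under uniform limits that you flag in Step~3 is indeed routine because $p<0$ is open. The trade-off is that the paper's direct ODE argument gives higher tangential regularity for free, whereas your compactness scheme, while more portable in spirit, here rests on the same key lemma and delivers only the $C^{1,\mu}$ statement.
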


First we need to prove a H\"older regularity result.

\begin{thm}\label{hold} Let $\varphi$ be a viscosity solution to \eqref{linearized} with $|\varphi| \leq 1$ in $B_1^+.$ Then $\varphi \in C^{\alpha}(B_{1/2}^+)$, with a universal bound on the $C^\alpha$ norm.
\end{thm}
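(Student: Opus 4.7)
The plan is to establish the standard oscillation decay at boundary points in $B'_{1/2}$, and combine it with interior Krylov--Safonov estimates (applicable away from $\{x_n = 0\}$ where $\mathcal L_s$ becomes uniformly elliptic with bounded first-order coefficient after rescaling) to obtain a full $C^\alpha$ modulus on $B_{1/2}^+$. Concretely, I will show there exist universal $\eta \in (0,1)$ and $\rho \in (0,1/2)$ such that any viscosity solution with $|\varphi|\leq 1$ in $B_1^+$ satisfies $\sup_{B_\rho^+}\varphi - \inf_{B_\rho^+}\varphi \leq 1-\eta$. Iterating this at every boundary point in $B'_{1/2}$, together with the standard interior-vs-boundary comparison of scales (for $x \in B_{1/2}^+$, apply the boundary estimate at the projection of $x$ to $\{x_n=0\}$ when $|x-y|\gtrsim x_n$, and the interior estimate on $B_{x_n/2}(x)$ otherwise), yields the desired H\"older norm bound.

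For the oscillation decay, normalize to $0 \leq \varphi \leq 1$; by passing to $1-\varphi$ if necessary, assume $\varphi(e_n/2) \geq 1/2$. Since $\mathcal L_s$ has bounded coefficients on $B_{1/4}(e_n/2) \subset \{x_n \geq 1/4\}$, the classical Harnack inequality gives $\varphi \geq c_0$ on $B_{1/8}(e_n/2)$, with $c_0$ universal. The task is then to propagate this positivity down to a neighborhood of the origin. I would construct a strict subsolution barrier $\Psi$ on $U := B_1^+ \setminus B_{1/8}(e_n/2)$ with $\Psi \leq \varphi$ on $\partial U \setminus B'_1$ (trivial on $\partial B_1^+ \setminus B'_1$ since $\varphi \geq 0$; using $\varphi \geq c_0$ on $\partial B_{1/8}(e_n/2)$), satisfying the strict version of the boundary condition on $B'_1$, and with $\Psi \geq c_1 > 0$ on $B_\rho^+$. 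Comparison between $\Psi$ and $\varphi$ then produces the claimed decay.

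The natural building block is the explicit homogeneous solution $x_n^{1-s}$ of the model operator $\Delta + s\,\partial_n/x_n$, which satisfies $\mathcal L_s(x_n^{1-s}) = 0$ for $A = I$ by direct computation, and has $\psi_s \equiv 1-s$ at the boundary. For $s < 1$, combining $\alpha\,x_n^{1-s}$ (whose positive coefficient of $x_n^{1-s}$ makes the boundary condition strictly compatible with the sub-barrier requirement, which forbids touching from above by test functions with $p < 0$) with a Hopf-type radial piece vanishing on $\partial B_1^+$ to enforce a strict interior subsolution, and with a cutoff, yields the required $\Psi$. For $s \geq 1$ the boundary condition reduces to pointwise boundedness, so a standard Harnack-type radial barrier with minor modifications absorbing the bounded singular term $s\,\varphi_n/x_n$ at the fixed scale suffices. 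For general $A$ the $x_n^{1-s}$ term picks up quadratic cross corrections which are harmless since $a_{nn} = 1$ and the coefficients are constant.

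The main obstacle is verifying a sufficiently robust viscosity comparison principle for \eqref{linearized} uniformly in $s \in [-1+\delta,\delta^{-1}]$, one that handles the two qualitatively distinct boundary regimes ($s \geq 1$: pure boundedness; $s < 1$: the forbidden $p<0$ test functions) at once. Once comparison is in hand, the barrier-and-iterate scheme runs cleanly, and the iteration of the oscillation decay around each boundary point in $B'_{1/2}$ produces H\"older continuity up to the flat boundary; combined with interior Krylov--Safonov on half-balls, this yields the stated $C^\alpha$ estimate on $B_{1/2}^+$.
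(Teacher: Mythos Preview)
Your overall strategy coincides with the paper's: reduce to a one-sided oscillation improvement at the flat boundary (interior Harnack to gain positivity on a slab, then a sliding barrier to push this down to $\{x_n=0\}$), and iterate. The paper packages this as Lemma~\ref{standard} and works on the slab $\{|x'|\le 3/4\}\times\{0\le x_n\le l\}$ rather than a punctured ball, with the explicit barrier $w=c(-|x'|^2+Ax_n^2+\tfrac{1}{32}x_n^{1-s})$ for $s<1$. This is cleaner than your radial-plus-cutoff recipe because $-|x'|^2+Ax_n^2$ is an exact $\mathcal L_s$-solution for $A=\Lambda(n-1)/\delta$, so no perturbation or cutoff is needed.

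Two corrections. First, the ``main obstacle'' you flag is not one: no abstract comparison principle is required. One simply slides the explicit barrier from below and rules out the first contact point directly from the definition. At interior points this is the strict subsolution inequality; at boundary points (for $s<1$) the barrier is \emph{literally} a test function of the prescribed form with $p>0$, and the supersolution half of Definition~7.1 (no touching from below by test functions with $p>0$) excludes contact there.

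Second, and this is a genuine gap, your $s\ge 1$ argument fails as written: the coefficient $s/x_n$ is \emph{not} bounded near $\{x_n=0\}$ at any fixed scale, so it cannot be ``absorbed'' into a standard Hopf barrier. The paper's remedy is to use, for each $\eps>0$, the barrier $w_\eps=c(-|x'|^2+Ax_n^2-\eps\, x_n^{1-s})$ (with $\eps\ln x_n$ replacing the last term when $s=1$). The singular negative term forces $w_\eps\to -\infty$ as $x_n\to 0^+$, so the barrier sits below $\varphi+1$ near the boundary automatically---this is exactly where the mere-boundedness condition for $s\ge 1$ enters---while $\mathcal L_s w_\eps>0$ in the interior. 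One obtains $w_\eps+c/2\le\varphi+1$ on $\{d(\eps)\le x_n\le l\}$ and then lets $\eps\to 0$. Without this limiting device your barrier cannot simultaneously be a strict subsolution up to $x_n=0$ and stay bounded below there.
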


The theorem above immediately follows from the next lemma.

\begin{lem}\label{standard} Let $\varphi$ be a viscosity solution to \eqref{linearized} with $|\varphi| \leq 1$ in $B_1^+.$ Assume that 
\be\label{sep}\varphi(\frac 1 2 e_n) >0.\ee Then, there exists a universal constant $c>0$ such that 
$$\varphi \geq -1+ c \quad \text{on $B_{1/2}^+$.}$$
\end{lem}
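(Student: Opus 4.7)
The plan is to set $v := 1 + \varphi$, so that $v$ is a nonnegative viscosity solution of \eqref{linearized} with $0 \le v \le 2$ and $v(\tfrac{1}{2} e_n) > 1$. The goal becomes to show $v \ge c$ on $B_{1/2}^+$ for some universal $c > 0$, which translates to $\varphi \ge -1 + c$. The argument is a standard Harnack-plus-barrier procedure: interior Harnack for the uniformly elliptic operator $\mathcal L_s$ restricted to a slab bounded away from $\{x_n=0\}$, then a subsolution barrier to propagate the lower bound all the way to the thin set.

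First, fix a universal height $H \in (0, 1/2)$. On $\{x_n \ge H\} \cap B_{3/4}$ the operator $\mathcal L_s$ is uniformly elliptic with drift bounded by $\delta^{-1}/H$, so the classical Krylov--Safonov Harnack inequality applied to $v$ (connected by a Harnack chain from $\tfrac{1}{2}e_n$) yields
\[
v \ge c_0 \quad \text{on} \quad B_{3/4}^+ \cap \{x_n \ge H\},
\]
for a universal $c_0 > 0$, simply because $v(\tfrac{1}{2}e_n) > 1$.

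Next, to propagate this bound down to $\{x_n = 0\}$, consider the cylinder $\Omega := \{|x'|<3/4\} \times (0, H)$ and, focusing on the case $s < 1$, the barrier
\[
\Phi_\varepsilon(x) := c_1\Bigl(1 - \tfrac{2|x'|^2}{(3/4)^2} + \beta x_n^2\Bigr) + \varepsilon\, x_n^{1-s}, \quad \varepsilon > 0,
\]
where $\beta$ is tuned so that $\mathcal L_s \Phi_\varepsilon \equiv 0$ (the relation $1 + s \ge \delta$ keeps $\beta$ universally bounded, and $\varepsilon x_n^{1-s}$ is separately $\mathcal L_s$-harmonic). For $c_1$ of order $c_0$ and $H$ small universal, one verifies $\Phi_\varepsilon \le 0$ on the lateral face $\{|x'|=3/4\}$, $\Phi_\varepsilon \le c_0$ on the top $\{x_n=H\}$, and $\Phi_\varepsilon \ge c_2 > 0$ on $B_{1/2}^+ \cap \Omega$ (using $1 - 2|x'|^2/(3/4)^2 \ge 1/9$ for $|x'|\le 1/2$). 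Comparison in $\Omega$ then gives $v \ge \Phi_\varepsilon$: interior contacts are excluded by standard viscosity comparison after an $\eta|x|^2$ perturbation making $\Phi_\varepsilon$ a strict subsolution; a contact at a boundary point $x^* \in \{x_n = 0\}$ would, upon dropping the innocuous $c_1\beta x_n^2$ term, produce an admissible test function $\phi(x) = A|x'-y_0'|^2 + B + \varepsilon\, x_n^{1-s}$ touching $v$ from below at $x^*$ with $p = \varepsilon > 0$, contradicting the supersolution boundary condition. Sending $\varepsilon \to 0$ yields $v \ge c_2$ on $B_{1/2}^+ \cap \Omega$, and combined with the first step, $v \ge \min(c_0, c_2)$ on all of $B_{1/2}^+$.

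The main obstacle is the boundary comparison: the $\varepsilon\, x_n^{1-s}$ perturbation is the essential device for matching the admissible class of test functions and triggering the supersolution boundary condition at a contact on $\{x_n = 0\}$. The complementary case $s \ge 1$, where the boundary condition reduces to uniform boundedness, is handled analogously with an appropriately modified barrier; here the singular drift is repulsive from the boundary and standard Harnack effectively extends up to $\{x_n=0\}$, e.g., via even reflection across the thin set.
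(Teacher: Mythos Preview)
For $s<1$ your argument is essentially the paper's: interior Harnack down to a slab $\{x_n=l\}$ gives $\varphi+1\ge c(l)$ there, and then a barrier of the shape $c(-|x'|^2+Ax_n^2)+p\,x_n^{1-s}$ is slid from below. The paper simply takes $A$ large so that the barrier is a \emph{strict} subsolution from the start (rather than tuning $\beta$ for exact harmonicity and then adding a separate $\eta|x|^2$ perturbation), and it uses a fixed coefficient $p=c/32$ in place of your $\eps\to 0$; these are cosmetic differences. One small omission: when you drop ``the innocuous $c_1\beta x_n^2$ term'' at a boundary contact, you must also drop the $\eta x_n^2$ piece of your perturbation to land in the admissible test class.

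The case $s\ge 1$ is a genuine gap. Your suggestion of even reflection does not work here: the coefficients $a_{in}$ with $i\ne n$ need not vanish, so the reflected function fails to satisfy $\mathcal L_s=0$ across $\{x_n=0\}$, and in any event the boundary condition for $s\ge 1$ is merely ``$\varphi$ bounded,'' which furnishes no reflection principle. The paper handles this case with the explicit barriers
\[
w_\eps=c\bigl(-|x'|^2+Ax_n^2-\eps\,x_n^{1-s}\bigr)\quad(s>1),\qquad
w_\eps=c\bigl(-|x'|^2+Ax_n^2+\eps\ln x_n\bigr)\quad(s=1),
\]
which tend to $-\infty$ as $x_n\to 0^+$; the comparison is therefore carried out only on $\{d(\eps)\le x_n\le l\}$, and one lets $\eps\to 0$. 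Your ``appropriately modified barrier'' should be made precise in exactly this way.
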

\begin{proof} From Harnack inequality, and assumption \eqref{sep}, we get that for $l>0$ small,
\be\label{top} \varphi +1 \geq c(l), \quad \text{on $\{|x'| \leq 3/4\} \times \{x_n=l\}$}.\ee We consider first the case when $s <1.$
Let ($c:=c(l)$)
\be\label{w} 
w:= c(-|x'|^2+A x_n^2+ \frac{1}{32} x_n^{1-s}), \quad A > \Lambda \frac{(n-1)}{\delta}.
\ee 
It is easy to verify that $w$ is a strict subsolution to the interior equation in \eqref{linearized} in $B_1 \cap \{x_n>0\}.$ Moreover, if $l$ is chosen sufficiently small (depending on $A$),
\be\label{one}w \leq -\frac 1 2 c  \quad \text{on $\{|x|'=\frac 3 4\} \times \{0 \leq x_n \leq l\}$}\ee
\be\label{two}w \leq \frac 1 2 c \quad \text{on $\{|x'| \leq 3/4\} \times \{x_n=l\}$.}\ee
Now, let 
$$w_t:= w+t, \quad t \geq -T$$
with $T$ large enough so that $w_T < \varphi +1$ in $\mathcal C:=\{|x'| \leq 3/4\} \times \{0 \leq x_n \leq l\}$. Let $\bar t$ be the largest $t$ such that $w_t \leq \varphi+1$ on $\mathcal C$ and let $\bar x$ be the first contact point. We wish to show that $\bar t \geq \frac{c}{2}.$ Indeed, if that is the case then
$$w+ \frac{c}{2} \leq \varphi +1 \quad \text{on $\mathcal C.$}$$ The desired claim then would follow since 
$$w+ \frac{c}{2} \geq \ \frac{c}{4}\quad \text{on $\{|x'| \leq 1/2\} \times \{0 \leq x_n \leq l\}$.}$$ 
We are left with the proof that $\bar t \geq \frac{c}{2}.$ Indeed if $\bar t < \frac{c}{2}$, then in view of \eqref{top}-\eqref{one}-\eqref{two}, the first contact point
for $w+\bar t$ cannot occur on $\{|x|'=\frac 3 4\} \times \{0 \leq x_n \leq l\}$ or on $\{|x'| \leq 3/4\} \times \{x_n=l\}$. On the other hand, the first contact point cannot occur neither on $\{x_n=0\}$ (because of the free boundary condition), nor in the interior of $\mathcal C$ (because $w+\bar t$ is a strict subsolution to the interior equation.) We have reached a contradiction, hence the desired claim holds.

If $s \geq 1,$ we set
$$w_\eps= c(-|x'|^2+ A x_n^2-\eps x_n^{1-s}), \quad s \neq 1;$$
$$w_\eps= c(-|x'|^2+ A x_n^2+\eps \ln x_n), \quad s = 1,$$ with $$A > \Lambda \frac{n-1}{2},$$ and $\eps>0.$ We choose $d(\eps) >0$ so that $$w_\eps \leq -\frac{c}{2} \quad \text{if $x_n \leq d(\eps)$}, \quad d(\eps) \to 0 \quad \text{as $\eps \to 0$.}$$
Then it is easy to check that for $l$ small,
$$w_\eps \leq \frac{c}{2} \quad \text{on $\{|x'| \leq 3/4\} \times \{x_n=l\}$};$$
$$w_\eps \leq -\frac{c}{2} \quad \text{on $\{|x|'=\frac 3 4\} \times \{0 \leq x_n \leq l\}$}.$$
Since $\mathcal L_s w_\eps >0$, we conclude that
$$w_\eps + \frac{c}{2}\leq \varphi +1 \quad \text{in $\{|x'| \leq 3/4\} \times \{d(\eps) \leq x_n \leq l\}.$}$$
By letting $\eps \to 0$,we obtain the desired estimate.

\end{proof}

One key ingredient in the proof of Theorem \ref{classical_1} is the next proposition, from which the subsequent corollary immediately follows . We postpone its proof till the end of the section.

\begin{prop}\label{diff}Let $\varphi, \psi$ be subsolutions (resp. supersolutions) to \eqref{linearized}.Then $\varphi+\psi$ is a subsolution (resp. supersolution) to \eqref{linearized}.
\end{prop}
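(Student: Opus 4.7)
The plan is to verify separately the interior equation and the boundary condition for $\varphi+\psi$, exploiting the linearity of the operator $\mathcal L_s$; the supersolution case is symmetric, handled by replacing sup-convolutions with inf-convolutions throughout.

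For the interior equation, since $\mathcal L_s$ is linear the claim is immediate at the classical level. For viscosity subsolutions I would employ the standard sup-convolution technique: the sup-convolutions
$$\varphi^\eps(x)=\sup_{y\in\ov{B^+_1}}\Bigl(\varphi(y)-\tfrac{1}{\eps}|x-y|^2\Bigr),\qquad \psi^\eps(x)=\sup_{y\in\ov{B^+_1}}\Bigl(\psi(y)-\tfrac{1}{\eps}|x-y|^2\Bigr)$$
are semiconvex and remain viscosity subsolutions of the interior equation in a slightly shrunken domain (the translation trick: a smooth test function touching $\varphi^\eps$ at $x^*$ gives a translated test function touching $\varphi$ at the sup-convolution argmax $y^*$). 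By Alexandrov's theorem they are twice differentiable a.e., so $\mathcal L_s\varphi^\eps,\mathcal L_s\psi^\eps\ge 0$ pointwise a.e., and linearity then gives $\mathcal L_s(\varphi^\eps+\psi^\eps)\ge 0$ a.e. Since the sum of semiconvex functions is semiconvex, this a.e. inequality yields the viscosity inequality by the standard lemma, and passing $\eps\to 0$ with uniform convergence on compacts concludes this step.

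For the boundary condition with $s\ge 1$, only boundedness is required, and it is trivially preserved under sums. For the boundary condition with $s<1$, I would argue by contradiction: assume $\phi=A|x'-y_0'|^2+B+px_n^{1-s}$ with $p<0$ touches $w=\varphi+\psi$ from above at $x_0\in B_1'$, strictness being arranged by adding $\sigma|x-x_0|^2$ (which leaves $p$ unchanged). The clean heuristic, valid when $\psi$ is smooth up to the flat boundary with $\psi_s(x_0)=0$, is that $\phi-\psi$ is smooth near $x_0$ and its expansion still has coefficient $p<0$ in front of $x_n^{1-s}$ (the smooth part of $\psi$ contributes none); after absorbing the extra smooth polynomial part into a shift of the quadratic, $\phi-\psi$ is once again a forbidden test function touching $\varphi$, contradicting $\varphi$ being a subsolution. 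To implement this for general viscosity subsolutions I would pass through sup-convolutions once more and verify they preserve the boundary condition: if a forbidden test function touches $\varphi^\eps$ at $x^*\in B'_{1-O(\sqrt\eps)}$, the argmax $y^*$ in the sup-convolution identity satisfies $|x^*-y^*|\le C\sqrt\eps$, so $y^*_n\le C\sqrt\eps$; in the limit $\eps\to 0$ the translated test function retains the forbidden form and touches $\varphi$ at a flat boundary point, yielding the contradiction with $\varphi$ being a subsolution. The analogous argument applied to $\varphi^\eps+\psi^\eps$, followed by the $\eps\to 0$ limit, closes the proof.

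The main obstacle is making the boundary case $s<1$ fully rigorous. The delicate point is handling the situation when the sup-convolution argmax lies strictly in the interior, $y^*_n>0$: the translated test function $A|y'-\tilde y_0'|^2+\tilde B+p(y_n-y^*_n)^{1-s}$ then touches $\varphi$ only in the one-sided neighborhood $\{y_n\ge y^*_n\}$ of an interior point, with a gradient that is singular as $y_n\to y^*_n$. One needs either to upgrade this one-sided information to a usable viscosity test (using semiconvexity of $\varphi^\eps$, or an extension argument exploiting the specific structure of the forbidden test functions), or to leverage the $\eps\to 0$ limit (where $y^*_n=O(\sqrt\eps)\to 0$) to reduce to the easier flat-boundary case treated in the heuristic.
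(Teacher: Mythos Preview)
Your treatment of the interior inequality and of the case $s\ge 1$ is fine, and you correctly isolate the real difficulty: the boundary condition when $s<1$. However, the gap you identify at the end is genuine and your proposed fix does not close it. Working with full sup-convolutions (in all variables) creates two problems. First, since $\mathcal L_s$ has the $x_n$-dependent coefficient $s/x_n$, translated test functions test the equation at the wrong height; this is a mild nuisance in the interior but becomes serious near $\{x_n=0\}$. Second, and more importantly, when the argmax $y^*$ has $y^*_n>0$ the translated barrier $p(y_n-y^*_n)^{1-s}$ is singular at an interior height and is not an admissible test function for either the interior equation or the boundary condition; the ``$\eps\to 0$ so $y^*_n\to 0$'' limit does not produce a statement about $\varphi$ at a boundary point.

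The paper avoids both issues with two ingredients you are missing. The first is to take sup-convolutions \emph{only in the tangential variables} $x'$:
\[
\varphi^\eps(y',y_n)=\sup_{x'}\Bigl(\varphi(x',y_n)-\tfrac{1}{\eps}|x'-y'|^2\Bigr).
\]
Because both the operator $\mathcal L_s$ and the class of boundary test functions are invariant under $x'$-translations, $\varphi^\eps$ remains an exact viscosity subsolution, and the argmax never leaves the level $\{x_n=y_n\}$; in particular, touching at a boundary point stays at the boundary. The second ingredient is Lemma~\ref{expansion}: if $\varphi(x',0)$ is $C^{1,1}$ in $x'$ at $0$, the interior replacement $\tilde\varphi$ (solution of $\mathcal L_s\tilde\varphi=0$ with the same boundary values) has a well-defined fractional normal derivative $\tilde\varphi_s(0)$ with the correct sign inherited from the viscosity boundary condition. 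In the proof of the proposition, the tangential sup-convolutions $\varphi^\eps,\psi^\eps$ are $C^{1,1}$ by below in $x'$; when $\phi=A|x'|^2+B+px_n^{1-s}$ touches $\varphi^\eps+\psi^\eps$ from above at $0$, each summand becomes $C^{1,1}$ in $x'$ at $0$, so Lemma~\ref{expansion} applies to each, yielding $\tilde\varphi^\eps_s(0)+\tilde\psi^\eps_s(0)\ge 0$, which contradicts $p<0$. Your heuristic ``subtract $\psi$ and test $\varphi$'' is exactly what Lemma~\ref{expansion} makes rigorous, but only after the tangential regularization has produced the $C^{1,1}$ information in $x'$ needed to run it.
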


\begin{cor}\label{derivative}Let $\varphi$ be a viscosity solution to \eqref{linearized} then for any unit vector $e'$ in the $x'$ direction, 
$$\frac{\varphi(x+\eps e') - \varphi(x)}{\eps}$$
is a viscosity solution to \eqref{linearized}.
\end{cor}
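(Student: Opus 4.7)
The plan is to deduce the corollary directly from Proposition \ref{diff} by using linearity together with the translation invariance of the problem in the tangential directions.

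First I would note that, since both the operator $\mathcal{L}_s = \sum a_{ij}\partial_{ij} + s\, \partial_n/x_n$ and the boundary condition $\varphi_s = 0$ have constant coefficients in the $x'$ variable, the translated function $\varphi(\,\cdot\, + \eps e')$ is again a viscosity solution of \eqref{linearized} on a slightly smaller upper half ball; this is where it matters that $e'$ is tangent to $\{x_n=0\}$, so the translation preserves the flat portion of the boundary. Second, $-\varphi$ is a viscosity solution of \eqref{linearized} as well: the equation is linear, so sub- and super-solutions get swapped, and the test functions in item (ii) of the definition have their sign of $p$ flipped, so touching $-\varphi$ from above by $A|x'-y'_0|^2 + B + px_n^{1-s}$ with $p<0$ corresponds to touching $\varphi$ from below by the negated test function (with $-p>0$), which is forbidden by the supersolution property for $\varphi$.

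Once these two invariances are in place, I apply Proposition \ref{diff} to the pair $\varphi(\,\cdot\, + \eps e')$ and $-\varphi(\,\cdot\,)$: as a sum of two subsolutions it is a subsolution, and as a sum of two supersolutions it is a supersolution, hence a viscosity solution. Finally, multiplying by the positive constant $1/\eps$ preserves viscosity solutions since the equation is linear homogeneous and the boundary test functions in item (ii) scale linearly in $p$. Therefore $\eps^{-1}\bigl(\varphi(x+\eps e')-\varphi(x)\bigr)$ solves \eqref{linearized}.

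The main (mild) obstacle is the boundary clause of item (ii) in the definition of viscosity solution: one must verify that the additivity given by Proposition \ref{diff} really covers the $p x_n^{1-s}$ term, and that negation behaves correctly with respect to the sign constraint $p<0$. Both verifications are essentially tautological given how item (ii) is phrased, but they are the only nontrivial checks—the interior part is immediate from linearity of $\mathcal{L}_s$ (constant coefficients), and the domain of definition simply shrinks from $B_1^+$ to $B_{1-\eps}^+$, which does not affect the corollary's statement.
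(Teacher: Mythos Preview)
Your proposal is correct and follows exactly the route the paper intends: the paper simply records that the corollary ``immediately follows'' from Proposition~\ref{diff}, and you have spelled out the implicit steps---translation invariance in the $x'$ direction, the fact that $-\varphi$ is again a viscosity solution (with the sign of $p$ in the test functions flipping appropriately), and scaling by the positive constant $1/\eps$. There is nothing to add.
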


Combining Corollary \ref{derivative} with the H\"older regularity of viscosity solutions, we obtain by standard techniques \cite{CC} the following result.

\begin{thm}\label{higher}Let $\varphi$ be a viscosity solution to \eqref{linearized} with $|\varphi| \leq 1$ in $B_1^+.$ Then for some $\mu \in (0,1)$ universal, $\varphi \in C^{k,\mu}$ in the $x'$ direction in $B_{3/4}^+$, for all $k \geq 1$, with $C^{k,\mu}$ norm bounded by a universal constant (depending on $k$).
\end{thm}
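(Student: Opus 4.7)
The plan is a Caffarelli--Cabré--type bootstrap that exploits the invariance of $\mathcal L_s$ and of the boundary condition $\varphi_s=0$ under translations in the $x'$ directions, combined with the Hölder estimate of Theorem \ref{hold}. Fix a tangential unit vector $e'\in\{x_n=0\}$ and $h\in\R$ small, and write $\tau_h\varphi(x):=\varphi(x+he')$ and $\Delta_h\varphi:=\tau_h\varphi-\varphi$. Since the coefficients of $\mathcal L_s$ and the boundary condition are independent of $x'$, $\tau_h\varphi$ solves \eqref{linearized} on a slightly smaller half-ball, so by Proposition \ref{diff} (and the fact that $-\varphi$ is also a solution, by linearity of $\mathcal L_s$ and the sign-flip in the definition of viscosity solution) so does $\Delta_h\varphi$. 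The base step is Theorem \ref{hold}, which gives $\varphi\in C^{\alpha_0}(B_{3/4}^+)$ with universal $\alpha_0\in(0,1)$ and universal norm. The goal is to feed this Hölder information back into the same theorem, applied to rescaled iterated tangential differences.

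Concretely, $w_h:=|h|^{-\alpha_0}\Delta_h\varphi$ is a viscosity solution of \eqref{linearized} on a slightly smaller half-ball and, by the base step, $\|w_h\|_{L^\infty}\le C$ uniformly in $h$. Applying Theorem \ref{hold} to $w_h$ gives a uniform $C^{\alpha_0}$ estimate; evaluating this estimate at pairs of points separated by $he'$ yields
$$|\Delta_h^2\varphi(x)|\le C\,|h|^{2\alpha_0}.$$
Inductively, the $k$-th iterated tangential difference $\Delta_h^k\varphi=\sum_{j=0}^k(-1)^{k-j}\binom{k}{j}\varphi(\cdot+jhe')$ is again a finite combination of translates of $\varphi$, hence a solution by Proposition \ref{diff}; rescaling by $|h|^{-k\alpha_0}$ and appealing to the previous step to obtain uniform $L^\infty$ control, Theorem \ref{hold} produces
$$|\Delta_h^k\varphi(x)|\le C_k\,|h|^{k\alpha_0}\quad\text{on }B_{1/2+c}^+,$$
with $C_k$ universal.

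Choosing $k$ large enough that $k\alpha_0>1$, the Marchaud characterization of Hölder classes via iterated differences converts this finite-difference control into classical tangential regularity: $\partial_{e'}\varphi$ exists and is continuous, and the difference quotients $(\tau_h\varphi-\varphi)/h$ converge to it uniformly on interior compact subsets as $h\to 0$. Because each such quotient is a viscosity solution by Corollary \ref{derivative}, stability of viscosity inequalities under uniform convergence forces $\partial_{e'}\varphi$ to solve \eqref{linearized} as well. Applying Theorem \ref{hold} to $\partial_{e'}\varphi$ gives $\varphi\in C^{1,\alpha_0}$ in the $x'$ direction; iterating (higher tangential derivatives are solutions, hence $C^{\alpha_0}$) yields $\varphi\in C^{k,\mu}$ in $x'$ for every $k\ge 1$, with $\mu=\alpha_0$ universal. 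The main obstacle is precisely the passage across regularity level one: the raw $C^{\alpha_0}$ estimate on $\varphi$ does not give any uniform bound on $(\tau_h\varphi-\varphi)/h$ as $h\to 0$, so one genuinely needs the iterated difference estimates plus a short Marchaud--type real-analysis argument to cross that threshold. Above it, the iteration is a clean loop using only Corollary \ref{derivative} and Theorem \ref{hold}.
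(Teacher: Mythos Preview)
Your proof is correct and is precisely the ``standard techniques \cite{CC}'' the paper invokes: you use Proposition \ref{diff}/Corollary \ref{derivative} to make tangential difference quotients solutions, feed them back into Theorem \ref{hold}, and bootstrap via the iterated-difference (Marchaud/Lemma 5.6 of \cite{CC}) mechanism to cross the integer threshold. The paper gives no further detail, so your write-up simply fleshes out what it cites.
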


We are now ready to provide the proof of our main theorem. 

\medskip

{\it Proof of Theorem $\ref{classical_1}$}. We rewrite the interior equation in \eqref{linearized} as
$$ \varphi_{nn} + s \frac{\varphi_n}{x_n}= g(x) + h(x),$$
with
$$g(x):=-\sum_{i,j \neq n} a_{ij} \varphi_{ij}, \quad h(x):=-\sum_{i \neq n} a_{in} \varphi_{in}.$$
By Theorem \ref{higher}, the function $g(x',x_n)$ is smooth in the $x'$-direction, and in particular, it is uniformly bounded on $0 \leq x_n  \leq 1/2.$ Similarly, by interior estimates and Theorem \ref{hold}, we conclude that for some $0<\alpha<1,$
$$|h(x', x_n)| \leq C x_n^{\alpha -1}, \quad \text{in $B_{1/2}^+$}.$$ Thus, for each fixed $x' \in B'_{1/2}$, we are led to consider the ODE,
$$u'' + s \frac{u'}{t} = f(t), \quad t \in [0,1/2],$$
with 
\be\label{fbound} |f(t)| \leq C(1+t^{\alpha -1}).\ee

The general solution is given by
$$u(t)= c_1 t^{1-s} + c_2 + \bar u(t), \quad \text{for $s \neq 1$,}$$
and
$$u(t)= c_1 \ln t + c_2 + \bar u(t), \quad \text{for $s = 1$,}$$
with $\bar u(t)$ a particular solution. It is easy to check that since $f$ satisfies \eqref{fbound}, we can choose a particular solution $\bar u$ that satisfies,
$$|\bar u| \leq C t^{1+\alpha}.$$
In conclusion
$$|\varphi(x',x_n)-c_1(x')x_n^{1-s}-c_2(x')| \le C x_n^{1+\alpha}.$$

Using the smoothness of $\varphi$ in the $x'$ direction together with the free boundary condition, we conclude that $c_1\equiv 0$, $c_2(x')=\varphi(x',0)$ and \eqref{estimate1} holds.

\qed

\medskip

In order to prove Proposition \ref{diff} we also need the following expansion lemma.

\begin{lem}[Expansion at regular points]\label{expansion}Let $s<1$ and let $\varphi \in C(B_1^+)$ be a viscosity supersolution to \eqref{linearized} in $B_1^+.$ Assume that $\varphi(x',0)$ is $C^{1,1}$ at $0$ in the $x'$-direction. If $\tilde \varphi$ is a  solution to $\mathcal L_s \tilde \varphi=0$ in $B_1 \cap \{x_n>0\}$ with $\tilde \varphi=\varphi$ on $\p B_1^+$, then, $\tilde\varphi_s(0)$ is well defined and 
$$\tilde \varphi_s(0) \leq 0.$$

\end{lem}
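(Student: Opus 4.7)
The plan is a three-step argument: first show $\tilde\varphi_s(0)$ is well-defined via the ODE decomposition from the end of the proof of Theorem \ref{classical_1}; second, establish the comparison $\tilde\varphi\le\varphi$ in $B_1^+$; third, assume $\tilde\varphi_s(0)>0$ and construct an admissible test function $\phi=A|x'-y_0'|^2+B+p\,x_n^{1-s}$ with $p>0$ that touches $\varphi$ from below at $0$, contradicting the viscosity supersolution property of $\varphi$.

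For the first step, Theorem \ref{higher} gives $\tilde\varphi\in C^\infty$ in the $x'$-directions in $B_{3/4}^+$, so $\sum_{i,j<n}a_{ij}\tilde\varphi_{ij}$ is bounded up to $\{x_n=0\}$, and interior Schauder applied to $\tilde\varphi_i$ together with Theorem \ref{hold} yields $|\tilde\varphi_{in}(x)|\le C\,x_n^{\alpha-1}$ for $i<n$. Freezing $x'$ and integrating the ODE $u''+su'/t=f(t)$ with $|f|\le C(1+t^{\alpha-1})$ reproduces the analysis of Theorem \ref{classical_1} and produces the uniform expansion
\begin{equation*}
\tilde\varphi(x',x_n)=c_2(x')+c_1(x')\,x_n^{1-s}+O(x_n^{1+\alpha}),
\end{equation*}
with $c_2(x')=\varphi(x',0)$, with $c_1$ continuous near $x'=0$, and $c_1(0)=\tilde\varphi_s(0)$. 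For the second step, pick $K$ large so that $\mathcal L_s(e^{Kx_1})=K^2 a_{11}e^{Kx_1}>0$, and consider $w_\eps:=\tilde\varphi+\eps e^{Kx_1}-\varphi$. On $\p B_1^+$, the equality $\tilde\varphi=\varphi$ gives $w_\eps\le\eps e^K$. A maximum of $w_\eps$ at an interior point $x_\eps$ (necessarily in $\{x_n>0\}$, where $\tilde\varphi$ is classical) would make the smooth function $\tilde\varphi+\eps e^{Kx_1}-w_\eps(x_\eps)$ touch $\varphi$ from below at $x_\eps$, and the interior viscosity supersolution inequality of $\varphi$ would force $\mathcal L_s(\tilde\varphi+\eps e^{Kx_1})(x_\eps)\le 0$, contradicting the strict positivity above. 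Hence $w_\eps\le\eps e^K$ throughout $B_1^+$, and letting $\eps\to 0$ gives $\tilde\varphi\le\varphi$.

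For the third step, set $p^*:=c_1(0)$ and assume $p^*>0$. Continuity of $c_1$ gives $c_1(x')\ge p^*/2$ for $|x'|\le\rho_0$, and the $C^{1,1}$ hypothesis on $c_2=\varphi(\cdot,0)$ at $0$ provides $a'\in\R^{n-1}$ and $K_0>0$ with $c_2(x')\ge\varphi(0)+a'\cdot x'-K_0|x'|^2$ on $|x'|\le\rho_0$. Combining the expansion from step one with step two,
\begin{equation*}
\varphi(x)\ge\tilde\varphi(x)\ge\varphi(0)+a'\cdot x'-K_0|x'|^2+\tfrac12 p^* x_n^{1-s}-C x_n^{1+\alpha}
\end{equation*}
for $|x|$ small. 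Take $K_1>K_0$, $p=p^*/4$, $y_0'=a'/(2K_1)$, $B=\varphi(0)+|a'|^2/(4K_1)$, so that the admissible test function
\begin{equation*}
\phi(x):=-K_1|x'-y_0'|^2+B+p x_n^{1-s}=\varphi(0)+a'\cdot x'-K_1|x'|^2+p x_n^{1-s}
\end{equation*}
satisfies $\phi(0)=\varphi(0)$ and, for $|x|$ small,
\begin{equation*}
\varphi(x)-\phi(x)\ge(K_1-K_0)|x'|^2+\tfrac14 p^* x_n^{1-s}-C x_n^{1+\alpha}\ge 0,
\end{equation*}
so that $\phi$ touches $\varphi$ from below at $0$ with $p>0$, the desired contradiction. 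The main technical obstacle lies in this final scale-matching: one needs $1+\alpha>1-s$, i.e.\ $\alpha>-s$, so that $x_n^{1-s}$ dominates the ODE error for small $x_n$; since $s>-1+\delta$ this forces running (or bootstrapping) the ODE decomposition of step one with a universal exponent $\alpha$ close to $1$, which is the delicate point of the proof.
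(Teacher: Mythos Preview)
Your Step 1 has a circular dependency: you invoke Theorem \ref{higher} (tangential smoothness) and the ODE decomposition from the proof of Theorem \ref{classical_1}, but in the paper's logical structure both of these rest on Proposition \ref{diff}, whose proof in turn \emph{uses} the present Lemma \ref{expansion}. So you cannot appeal to them here. Even setting the circularity aside, Theorem \ref{higher} is stated for viscosity solutions of the full problem \eqref{linearized} (interior equation \emph{and} the boundary condition), whereas $\tilde\varphi$ is by construction only an interior solution with prescribed Dirichlet data on $\partial B_1^+$; the whole point of the lemma is that $\tilde\varphi$ need not satisfy $\tilde\varphi_s=0$. Note also that the hypothesis on $\varphi(\cdot,0)$ is $C^{1,1}$ only at the single point $0$, not in a neighborhood, so uniform tangential regularity of $\tilde\varphi$ up to $\{x_n=0\}$ is not to be expected anyway.

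The paper avoids all of this by working directly with the explicit barriers $\pm 2C|x'|^2 \mp A x_n^2 + q\,x_n^{1-s}$, chosen so that $\mathcal L_s$ annihilates them exactly. A dyadic iteration (using only the elementary barrier argument of Lemma \ref{standard}, which is independent of Proposition \ref{diff}) shows that the optimal lower and upper coefficients $p_k$, $m_k$ converge geometrically to a common limit $\bar p=\bar m=\tilde\varphi_s(0)$. This also dissolves the exponent-matching obstacle you flag at the end: since the barriers are exact solutions there is no $O(x_n^{1+\alpha})$ remainder to absorb, and the constraint $\alpha>-s$ never arises. Your Steps 2 and 3 (the comparison $\tilde\varphi\le\varphi$ via a strict subsolution perturbation, then touching $\varphi$ from below by an admissible test function with $p>0$) are essentially what is needed once $\tilde\varphi_s(0)$ is known to exist, and those parts are fine.
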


\begin{proof} Without loss of generality, we can assume that $\tilde \varphi(0,0)=0, \nabla_{x'}\tilde \varphi(0,0)=0$. Since $\tilde \varphi(x',0)$ is $C^{1,1}$ at 0, in a neighborhood of $0$ we have that for some large constant $C>0$,
$$-C|x'|^2 \leq \tilde\varphi(x',0) \leq C|x'|^2.$$
We define $(k \geq 0)$, $$p_k:= \sup\{p: \tilde\varphi \geq - 2C|x'|^2+Ax_n^2+p x_n^{1-s} \quad \text{in $B^+_{2^{-k}}$}\},$$
$$m_k:= \inf\{m: \tilde\varphi \leq 2C|x'|^2-Ax_n^2+m x_n^{1-s} \quad \text{in $B^+_{2^{-k}}$}\},$$ with $A>0$ chosen so that
$$\mathcal L_s (-2C|x'|^2+Ax_n^2)=0.$$
Notice that $\{p_k\}_k$ is an increasing sequence, while $\{m_k\}_k$ is decreasing. Thus,
$$\bar p=\sup p_k, \quad \bar m:= \inf m_k,$$ are well defined. 

We wish to show that 
\be\label{equal}\bar p=\bar m \in (-\infty, +\infty),\ee
from which our claims will follow immediately.

First, set
$$w= -2C |x'|^2+ A x_n^2- M x_n^{1-s}$$
with $A$ as above, and $M>0$ large so that
$$w \leq \tilde\varphi \quad \text{on $\p B_1^+$}.$$ Thus, $w \leq \tilde\varphi$ in $B_1^+$ and $\{p_k\}_k$ is bounded below. Similarly, $\{m_k\}_k$ is bounded above. 
In order to obtain \eqref{equal},we prove by induction that there exist sequences $\{\bar p_k\}, \{\bar m_k\}$ with $\bar p_k \leq p_k$ and $\bar m_k \geq m_k$ such that 
\be\label{k+1}m_k-p_k \leq \bar m_{k} - \bar p_{k}= C_0 (1-c_0)^k,\ee with $c_0>0$ universal to be specified later, and $C_0$ chosen universal so that the statement holds for $k=0$.
Towards this aim let $\mu:=\frac{\bar m_k- \bar p_k}{2}$ and assume \eqref{k+1} holds for $k\geq 1$. If \be\label{assume}\tilde\varphi(\frac r 2 e_n) \geq (\bar p_k+\mu)(\frac r 2)^{1-s}, \quad r=2^{-k},\ee then we claim that 
\be\label{conclude} p_{k+1} \geq \bar p_k + c_1 \mu.\ee
Similarly, if 
\be\label{assume2}\tilde\varphi(\frac r 2 e_n) \leq (\bar m_k-\mu)(\frac r 2)^{1-s}, \quad r=2^{-k},\ee
then,
\be\label{conclude2} m_{k+1} \leq \bar m_k - c_1 \mu.\ee
Thus assuming \eqref{k+1} holds for $k \geq 1$ with $c_0 = c_1/2$, if \eqref{assume} is satisfied, we can choose $\bar p_{k+1}= \bar p_k + c_1\mu$ and $\bar m_{k+1}=\bar m_k$, otherwise we choose $\bar p_{k+1} = \bar p_k$ and $\bar m_{k+1} = \bar m_k -c_1\mu.$

To conclude our proof, let us assume that \eqref{assume} hold and let us show that \eqref{conclude} follows.

Call $$v_k := - 2C|x'|^2+Ax_n^2+\bar p_k x_n^{1-s}$$
and
$$u_k(x):=r^{-1+s} (\tilde\varphi - v_k)(r x), \quad x \in B_1.$$
Then,
$$\mathcal L_s u_k=0 , \quad u_k \geq 0 \quad \text{in $B_1 \cap \{x_n>0\}$}$$
and
$$u_k(\frac 1 2 e_n) \geq \mu -A (\frac r 2)^{s+1} \geq \frac \mu 2,$$
where in the last inequality we used that by the induction hypothesis
$$\mu = C_0(1-c_0)^k = C_0 r^\alpha$$
for some small $\alpha,$ and $C_0, c_0$ can be chosen possibly larger and smaller respectively (recall that $s+1>\delta$.) By a standard barrier argument (see proof of Lemma \ref{standard}) we conclude that 
$$u_k \geq c_1 \mu x_n^{1-s}, \quad \text{in $B_{1/2}^+$},$$ and the desired claim follows.
\end{proof}

\begin{rem} The existence of the replacement $$\tilde \varphi \in C^{2}(B_1\cap \{x_n>0\}) \cup C(\overline{B_1^+})$$ can be achieved via Perron's method. Using the barrier functions $\pm w$ in the proof above, one can guarantee the continuity up to the boundary.
\end{rem}

\medskip

We conclude this section with the proof of Proposition \ref{diff}. First, let us introduce the following regularizations. Given a continuous function $\varphi$ in $B_1^+$, we define for $\eps>0$ the upper $\eps$-envelope in the $x'$ direction,
$$\varphi^\eps(y',y_n) = \sup_{x \in B_\rho^+ \cap\{x_n =y_n\}}\{\varphi(x',y_n)  - \frac{1}{\eps}|x'-y'|^2\}, \quad y=(y',y_n) \in B_\rho^+.$$

The proof of the following facts is standard (see \cite{CC}):

(1) $\varphi^\eps \in C(B_\rho^+)$ and $\varphi_\eps \to \varphi$ uniformly in $B_\rho^+$ as $\eps \to 0.$

(2) $\varphi^\eps$ is $C^{1,1}$ in the $x'$-direction by below in $B_\rho^+$. Thus, $\varphi^\eps$ is pointwise second order differentiable in the $x'$-direction at almost every point in $B^+_\rho.$

(3) If $\varphi$ is a viscosity subsolution to \eqref{linearized} in $B_1^+$ and $r <\rho$, then for $\eps \leq \eps_0$ ($\eps_0$ depending on $\varphi, \rho, r$) $\varphi^\eps$ is a viscosity subsolution to \eqref{linearized} in $B^+_r.$ This fact follows from the obvious remark that the maximum of solutions of \eqref{linearized}  is a viscosity subsolution.

Analogously we can define $\varphi_\eps$, the lower $\eps$-envelope of $u$ in the $x'$-direction which enjoys the corresponding properties.

We are now ready  to prove our main proposition.

\medskip

\textit{Proof of Proposition \ref{diff}.} In view of property (1) above, it is enough to show that $$v:= \varphi^\eps + \psi^\eps$$ is a subsolution to \eqref{linearized} on $B^+_1$. The case $s \geq 1$ is trivial, and the interior property is standard. We only need to check the boundary condition when $s<1.$

Assume by contradiction that there exists $A > 0$ so that $$\phi:= A|x'|^2 + px_n^{1-s},$$ touches $v$ by above say at $0,$ and $p<0.$ Then $\varphi^\eps, \psi^\eps$ are $C^{1,1}$ at zero in the $x'$-direction. This follows from the fact that   $\varphi^\eps, \psi^\eps$ are $C^{1,1}$ by below (see property $(2)$) and their sum is $C^{1,1}$ by above at the origin. According to Lemma \ref{expansion}, we can consider their replacements $\tilde \varphi^\eps, \tilde \psi^\eps$. Thus $\phi$ will touch $\tilde \varphi^\eps+ \tilde \psi^\eps$ by above at zero and $$\tilde \varphi_s^\eps(0)+ \tilde \psi_s^\eps (0) \geq 0,$$ a contradiction.

\qed

\subsection{The linear problem.}\label{lin_sub} We now discuss the general case. Let  $\omega \in \mathbb S^{n}$ and $v:= \lambda \omega$, with $\lambda \in \R.$ Denote by $$s:= v \cdot e_n$$
and assume that for $\delta>0$, \be\label{delta}\delta^{-1} \geq s \geq -1 + \delta, \quad \omega_n \geq  \delta.\ee
\begin{defn}\label{v_linear}  We say that $\varphi$ is a viscosity subsolution to \be\begin{cases}\label{linearized2}
\Delta \varphi + v \cdot  \dfrac{\nabla \varphi}{x_n}=0, \quad \text{in $B_1 \cap \{x_n>0\},$}\\
\varphi_\omega:=\lim_{t \to 0} \dfrac{\varphi(x_0+t \omega)-\varphi(x_0)}{t^{1-s}} = 0 \quad \text{on $B'_1,$}
\end{cases}\ee if it is continuous in $B^+_2$, it is a subsolution to the equation in $B_1 \cap \{x_n >0\}$ in the viscosity sense, and
\begin{enumerate}
\item if $s \geq 1$, then $\varphi$ is uniformly bounded in $B_1^+;$
\item if $s <1$, then $\varphi$ is continuous in $B_1^+$ and it cannot be touched by above at a point $x_0 \in B'_1$ by a test function $$\phi:= A|x'- \frac{\omega'}{\omega_n}x_n-y'_0|^2+  B + px_n^{1-s}, \quad A,B \in \R, y'_0 \in \R^{n-1},$$ with $$p < 0.$$\end{enumerate}
\end{defn}

We remark that, after performing the following domain variation:
\be\label{tilde}\tilde \varphi(x',x_n)= \varphi(x' +\frac{\omega' x_n}{\omega_n}, x_n)\ee
 the function $\tilde\varphi$ satisfies the equation
 \be\label{dv} \sum_{i,j \neq n} d_{ij} \tilde\varphi_{ij} + \sum_{i \neq n} b_i \tilde\varphi_{in} + \tilde\varphi_{nn}+s \frac{\tilde\varphi_n}{x_n}=0 \quad \text{in $B_c^+$},\ee
 where
 \be\label{coeff} d_{ij}=  \frac{\omega_i \omega_j}{\omega_n^2}, \quad b_i= 2\frac{\omega_i}{\omega_n}.\ee
 In particular, in view of \eqref{delta}, equation \eqref{dv} is uniformly elliptic with ellipticity constants depending only on $\delta.$ It is also easy to see that $\tilde \varphi$ satisfies the free boundary condition $\tilde \varphi_s =0$ on $B'_{c}$. Thus, the next result follows from Theorem \ref{classical_1}. Here constants depending on $n,\delta,$ are called universal.

 \begin{thm} \label{classical}Let $\varphi$ be a viscosity solution to \eqref{linearized} with $|\varphi| \leq 1$ in $B_1^+.$ Then $\varphi \in C^{1,\mu}(B_{1/2}^+),$ with a universal bound on the $C^{1,\mu}$ norm. In particular, $\varphi$ satisfies for any $x_0 \in B'_{1/2},$
\be\label{estimate}|\varphi(x) -\varphi(x_0) - a\cdot (x-x_0) | \leq C|x-x_0|^{1+\mu},  \quad\quad |a| \leq C,
\ee
with $C>0$, $0<\mu <1$ universal, and a vector $a\in \R^{n-1}$ depending on $x_0$, with
$$a \cdot \omega=0.$$
 \end{thm}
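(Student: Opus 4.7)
The plan is to reduce Theorem \ref{classical} to Theorem \ref{classical_1} via the change of variables \eqref{tilde}, which straightens the drift direction $\omega$ into $e_n$ at the cost of introducing a constant-coefficient uniformly elliptic second-order part.

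First, I would define $\tilde{\varphi}(x',x_n):=\varphi(x'+\omega' x_n/\omega_n,x_n)$ on a slightly smaller half-ball $B_c^+$. A direct chain-rule computation (already indicated by \eqref{dv}--\eqref{coeff}) shows that, in the interior, $\tilde{\varphi}$ satisfies $\mathcal{L}_s\tilde{\varphi}=0$ with the constant matrix $A=(a_{ij})$ where $a_{ij}=\delta_{ij}+\omega_i\omega_j/\omega_n^2$ for $i,j\neq n$, $a_{in}=\omega_i/\omega_n$ for $i\neq n$, and $a_{nn}=1$; assumption \eqref{delta} guarantees that $A$ is uniformly elliptic with ellipticity depending only on $\delta$. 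Since the change of variables is affine and leaves the hyperplane $\{x_n=0\}$ fixed, interior viscosity subsolutions/supersolutions are preserved.

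Next I would check that the free boundary condition transforms correctly. In the case $s<1$, a test function of the ``normalized'' form $A|x'-y_0'|^2+B+px_n^{1-s}$ touching $\tilde{\varphi}$ by above at a boundary point $(x_0',0)$ corresponds under \eqref{tilde} to the function $A|y'-\omega'y_n/\omega_n-y_0''|^2+B+py_n^{1-s}$ touching $\varphi$ by above at $(x_0',0)$ (with $y_0''$ the appropriate shift of $y_0'$). Thus the admissible test families in Definition \ref{v_linear} match those of the normalized problem exactly, and the viscosity property passes from $\varphi$ to $\tilde{\varphi}$ (the case $s\geq 1$ is immediate from $L^\infty$ preservation).

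Applying Theorem \ref{classical_1} to $\tilde{\varphi}$ yields the pointwise expansion \eqref{estimate1} for $\tilde{\varphi}$ at any $x_0\in B'_{1/2}$: there exists $\tilde{a}'\in\mathbb{R}^{n-1}$ with $|\tilde{a}'|\leq C$ such that
\[
|\tilde{\varphi}(x)-\tilde{\varphi}(x_0)-\tilde{a}'\cdot(x'-x_0')|\leq C|x-x_0|^{1+\mu}.
\]
Unwinding \eqref{tilde}, this translates into $|\varphi(y)-\varphi(y_0)-a\cdot(y-y_0)|\leq C|y-y_0|^{1+\mu}$ with
\[
a=(\tilde{a}',\,-\tilde{a}'\cdot\omega'/\omega_n),
\]
and a direct computation gives $a\cdot\omega=\tilde{a}'\cdot\omega'-\tilde{a}'\cdot\omega'=0$, as required; the bound $|a|\leq C$ follows from $|\tilde{a}'|\leq C$ and $\omega_n\geq\delta$. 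Iterating the expansion in the standard way (or directly invoking the $C^{1,\mu}$ bound for $\tilde{\varphi}$ and composing with the Lipschitz map in \eqref{tilde}) upgrades this pointwise estimate to a uniform $C^{1,\mu}$ bound on $B_{1/2}^+$.

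I expect the only slightly delicate point to be the translation of the test-function classes in the boundary condition: the ``tilted paraboloid'' $A|x'-\omega' x_n/\omega_n-y_0'|^2$ in Definition \ref{v_linear} is precisely engineered to become $A|x'-y_0'|^2$ under \eqref{tilde}, so once this matching is verified, the rest of the argument is a bookkeeping translation of Theorem \ref{classical_1}. The orthogonality $a\cdot\omega=0$ is forced by the absence of an $x_n^{1-s}$ term in the expansion of $\tilde{\varphi}$ (which is the content of $\tilde{\varphi}_s(x_0')=0$).
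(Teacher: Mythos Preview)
Your proposal is correct and follows exactly the approach of the paper: reduce to the normalized problem via the affine change of variables \eqref{tilde}, verify that both the interior equation and the boundary test-function class transform as required, apply Theorem \ref{classical_1}, and then unwind to recover the expansion with $a\cdot\omega=0$. The paper in fact states this reduction as a remark immediately preceding the theorem (``Thus, the next result follows from Theorem \ref{classical_1}'') without spelling out the details you provide.
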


\end{document}